\numberwithin{equation}{section}
\numberwithin{figure}{section}
\theoremstyle{plain}
\newtheorem{thm}{\protect\theoremname}
  \theoremstyle{plain}
  \newtheorem{conjecture}[thm]{\protect\conjecturename}
  \theoremstyle{plain}
  \newtheorem*{thm*}{\protect\theoremname}
  \theoremstyle{definition}
  \newtheorem{defn}[thm]{\protect\definitionname}
  \theoremstyle{plain}
  \newtheorem{lem}[thm]{\protect\lemmaname}
  \theoremstyle{plain}
  \newtheorem{prop}[thm]{\protect\propositionname}
  \theoremstyle{remark}
  \newtheorem{rem}[thm]{\protect\remarkname}
  \theoremstyle{plain}
  \newtheorem*{conjecture*}{\protect\conjecturename}
  \theoremstyle{plain}
  \newtheorem{cor}[thm]{\protect\corollaryname}
\date{}
  \providecommand{\conjecturename}{Conjecture}
  \providecommand{\corollaryname}{Corollary}
  \providecommand{\definitionname}{Definition}
  \providecommand{\lemmaname}{Lemma}
  \providecommand{\propositionname}{Proposition}
  \providecommand{\remarkname}{Remark}
  \providecommand{\theoremname}{Theorem}
\providecommand{\theoremname}{Theorem}
\begin{document}

\title{Congruence relations for Shimura varieties associated to $GU(n-1,1)$}

\author{Jean-Stefan Koskivirta}
\begin{abstract}
We prove the congruence relation for the mod-$p$ reduction of Shimura
varieties associated to a unitary similitude group $GU(n-1,1)$ over
$\mathbb{Q}$, when $p$ is inert and $n$ odd. The case when $n$
is even was obtained by T. Wedhorn and O. Bültel in \cite{BW1}, as
a special case of a result of B. Moonen in \cite{BM}, when the ordinary
locus of the $p$-isogeny space is dense. This condition fails in
our case. A key element is the understanding of the supersingular
locus, for which we refer to the two articles \cite{V1,V2}. The proof
makes extensive use of elementary algebraic geometry, but also some
deeper results.
\end{abstract}
\maketitle

\section*{Introduction}

Let $(G,X)$ be a Shimura datum where $G$ is a reductive group over
$\mathbb{Q}$. We fix a prime $p$. For every compact open subgroup
$K\subset G(\mathbb{A}_{f})$, let $Sh_{K}$ the associated Shimura
variety with reflex field $E$. The complex points of $Sh_{K}$ are:
\[
Sh_{K}(\mathbb{C})=G(\mathbb{Q})\diagdown\left(X\times G(\mathbb{A}_{f})\diagup K\right)
\]
When $K$ is sufficiently small, $Sh_{K}$ is smooth. Assume $G_{\mathbb{Q}_{p}}$
is unramified and $K=K_{p}K^{p}$ with $K_{p}\subset G(\mathbb{Q}_{p})$
hyperspecial and $K^{p}\subset G(\mathbb{A}_{f}^{p})$, then $Sh_{K}$
is said to have good reduction at $p$. Let $\mathfrak{p}$ be a prime
in $E$ lying over $p$. In \cite{BR}, the authors define a polynomial
$H_{\mathfrak{p}}$ with coefficients in the Hecke algebra $\mathcal{H}(G(\mathbb{Q}_{p})//K_{p})$,
the set of $\mathbb{Q}$-linear combinations of $K_{p}$-double cosets
of $G(\mathbb{Q}_{p})$. It is made into a ring by convolution. This
ring acts on the cohomology of the Shimura variety. Denote by $Fr_{\mathfrak{p}}$
the conjugacy class of geometric Frobenius in $\textrm{Gal}(\overline{\mathbb{Q}}/E)$.
Blasius and Rogawski conjectured the following:
\begin{conjecture}
\label{rdcconj}Let $\ell$ be a prime $\neq p$. Inside the ring
\textup{$\textrm{End}_{\mathbb{Q}_{l}}\left(H_{et}^{\bullet}(Sh_{K}\times_{E}\overline{\mathbb{Q}},\mathbb{Q}_{\ell})\right)$},
the following relation holds

\textup{
\[
H_{\mathfrak{p}}(\textrm{Fr}{}_{\mathfrak{p}})=0
\]
}
\end{conjecture}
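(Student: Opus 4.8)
The plan is to establish Conjecture~\ref{rdcconj} in the case treated in this paper: the Shimura datum $GU(n-1,1)$ over $\mathbb{Q}$, with $p$ inert in the imaginary quadratic field $\mathcal{K}$ and $n$ odd. The first step is to reduce the cohomological identity to a \emph{geometric} congruence relation on the special fibre. For $n\geq 3$ the reflex field is $E=\mathcal{K}$, so since $p$ is inert $\mathbb{F}_{\mathfrak{p}}=\mathbb{F}_{p^{2}}$. Let $\mathscr{S}_{K}$ be the integral PEL model of $Sh_{K}$ over $\mathcal{O}_{E,\mathfrak{p}}$ in the sense of Kottwitz, with smooth special fibre $\overline{\mathscr{S}}_{K}/\mathbb{F}_{p^{2}}$ of dimension $n-1$; when $Sh_{K}$ is not proper one first passes to a toroidal compactification and uses triviality of the relevant nearby cycles, so that $H^{\bullet}_{et}(Sh_{K}\times_{E}\overline{\mathbb{Q}},\mathbb{Q}_{\ell})\cong H^{\bullet}_{et}(\overline{\mathscr{S}}_{K}\times_{\mathbb{F}_{p^{2}}}\overline{\mathbb{F}}_{p},\mathbb{Q}_{\ell})$ with $Fr_{\mathfrak{p}}$ corresponding to the relative $p^{2}$-Frobenius $\Phi$. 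It then suffices to prove, in the ring of algebraic correspondences on $\overline{\mathscr{S}}_{K}$ modulo those acting trivially on $H^{\bullet}_{et}$, that $H_{\mathfrak{p}}(\Phi)=0$.

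To compute $H_{\mathfrak{p}}(\Phi)$ I would use the $p$-isogeny space $\mathcal{I}_{K}$, the moduli space of isogenies $\phi\colon(A,\lambda,\iota,\bar\eta)\to(A',\lambda',\iota',\bar\eta')$ of PEL type $\mu$, with its two finite projections $\pi_{1},\pi_{2}\colon\mathcal{I}_{K}\to\mathscr{S}_{K}$: the $K_{p}$-double cosets occurring in $H_{\mathfrak{p}}$ give cycle classes on the special fibre $\overline{\mathcal{I}}_{K}$, while $\Phi$ and its transpose $\Phi^{\vee}$ appear as the Frobenius and Verschiebung sections, with $\Phi\cdot\Phi^{\vee}$ equal to a power of $p$ times a central Hecke operator. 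Thus $H_{\mathfrak{p}}(\Phi)$ is represented by an explicit combination of cycles on $\overline{\mathcal{I}}_{K}$ together with the fibre products over $\overline{\mathscr{S}}_{K}$ describing the Hecke compositions. Over the complement $U=\overline{\mathscr{S}}_{K}\setminus\overline{\mathscr{S}}_{K}^{\,\mathrm{ss}}$ of the supersingular locus I would prove $H_{\mathfrak{p}}(\Phi)|_{U}=0$ by stratifying along Newton polygons and running the argument of \cite{BM}, which on the $\mu$-ordinary stratum specialises to \cite{BW1}: the slope filtration of the universal $p$-divisible group controls all $p$-isogenies of type $\mu$ over each Newton stratum, hence pins down the components of $\overline{\mathcal{I}}_{K}$ lying over $U$ and the multiplicities of the fibre-product corrections, and the combinatorial identity built into the Blasius--Rogawski polynomial forces the signed sum to vanish over $U$. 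The upshot is that $H_{\mathfrak{p}}(\Phi)$ is represented by a correspondence supported on $\overline{\mathscr{S}}_{K}^{\,\mathrm{ss}}\times\overline{\mathscr{S}}_{K}^{\,\mathrm{ss}}$.

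At this point the parity of $n$ is decisive. By \cite{V1,V2}, $\overline{\mathscr{S}}_{K}^{\,\mathrm{ss}}$ is pure of dimension $\lfloor(n-1)/2\rfloor$. If $n$ were even, then $2\lfloor(n-1)/2\rfloor=n-2<n-1$, so an $(n-1)$-dimensional cycle supported on $\overline{\mathscr{S}}_{K}^{\,\mathrm{ss}}\times\overline{\mathscr{S}}_{K}^{\,\mathrm{ss}}$ would vanish automatically --- this is the mechanism behind \cite{BW1}. When $n$ is odd one has instead $2\lfloor(n-1)/2\rfloor=n-1$: the supersingular product has exactly the dimension of a correspondence, the dimension count is useless, and this is precisely why the ordinary locus of $\mathcal{I}_{K}$ fails to be dense. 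To conclude I would exploit the fine description of \cite{V1,V2}: $\overline{\mathscr{S}}_{K}^{\,\mathrm{ss}}$ is the union of the closures of Deligne--Lusztig varieties $Y_{\Lambda}$ for finite unitary groups, indexed by the vertices $\Lambda$ of the Bruhat--Tits building of the inner form $J$ of $G$ at $p$, glued along lower-dimensional Deligne--Lusztig strata, with explicitly known dimensions, incidence pattern, and action of the $p$-isogenies preserving $\overline{\mathscr{S}}_{K}^{\,\mathrm{ss}}$. Using this I would list the top-dimensional components of $\overline{\mathcal{I}}_{K}$ over $\overline{\mathscr{S}}_{K}^{\,\mathrm{ss}}$, compute the (non-transverse) excess-intersection multiplicities with which they enter $H_{\mathfrak{p}}(\Phi)$, and show that the resulting supersingular cycle is zero, equivalently that it kills $H^{\bullet}_{et}$; this should reduce to an Eichler--Shimura type identity relating $\Phi$, $\Phi^{\vee}$ and the $p$-isogeny operators on $\bigcup_{\Lambda}Y_{\Lambda}$, which can be checked by hand and, if necessary, after passing to the known $\ell$-adic cohomology of the $Y_{\Lambda}$ as modules over the relevant finite unitary groups.

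I expect this last step to be the main obstacle. Without the density shortcut there is no soft reason for the supersingular correction to vanish, so one must (i) determine exactly which components of $\overline{\mathcal{I}}_{K}$ sit over the half-dimensional locus $\overline{\mathscr{S}}_{K}^{\,\mathrm{ss}}$, and with which multiplicities, and (ii) control the intersection-theoretic contributions along the strata where the varieties $Y_{\Lambda}$ meet, which are typically non-transverse. It is in (i)--(ii) that the precise results of \cite{V1,V2} on the supersingular locus are indispensable, and where the hypothesis that $n$ is odd is genuinely used.
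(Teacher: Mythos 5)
The overall strategy you outline is the right one and tracks the paper closely at a high level: pass to the special fibre, work in the ring of cycles on the $p$-isogeny space, dispose of the ordinary part via Moonen's result, and be left with a supersingular correction which you must show vanishes. You are also entirely right that the crux is step (i)--(ii) at the end, and that when $n$ is odd the dimension count $2\cdot\frac{n-1}{2}=n-1$ gives no soft vanishing. But this is exactly where your proposal stops being a proof and becomes a to-do list: ``list the top-dimensional components, compute the excess-intersection multiplicities, check an Eichler--Shimura type identity by hand, if necessary pass to the known $\ell$-adic cohomology of the $Y_{\Lambda}$.'' None of these steps is carried out, and you yourself flag them as the main obstacle. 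There is no reason to expect a brute-force enumeration of supersingular components of $p\textrm{-}\mathscr{I}sog$ together with multiplicities to be tractable, nor a mechanism forcing the resulting signed sum to be zero without an extra structural input.

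The missing input is the key algebraic fact the paper proves first: the Hecke polynomial factors as $H_{p}(t)=R(t)\cdot\bigl(t-p^{n-1}\,1_{pK_{p}}\bigr)$ in $\mathcal{H}(G(\mathbb{Q}_{p})//K_{p})[t]$ (Theorem~\ref{facteurH}), where $1_{pK_{p}}$ specialises to the multiplication-by-$p$ cycle $\langle p\rangle$. Once you have this, you do not need to understand the entire supersingular part of $H_{p}(F)$; you only need to kill the right-hand linear factor against any supersingular component. Writing $H'_{p}=\textrm{cl}(\textrm{ord}(H_{p}))$, one gets $H_{p}(F)=(H_{p}-H'_{p})(F)=(R-R')(F)\cdot\bigl(F-p^{n-1}\langle p\rangle\bigr)$, with $(R-R')(F)$ a supersingular cycle, so the whole problem reduces to the single identity $C\cdot\bigl(F-p^{n-1}\langle p\rangle\bigr)=0$ for every supersingular irreducible component $C$ (Lemma~\ref{RDClemme}, Theorem~\ref{rdcsjalv}). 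That identity is then proved cleanly: after shrinking the level so that $(s,t)$ is a closed immersion on the supersingular locus (Theorem~\ref{stimfer}), a supersingular component $C$ maps isomorphically onto $C_{s}\times C_{t}$ with $C_{s},C_{t}$ irreducible components of $\overline{Sh_{K}}^{ss}$ of dimension $\frac{n-1}{2}$, one checks $\mathcal{F}(C_{t})=\langle p\rangle C_{t}$ (Proposition~\ref{pCFC}), so $C\cdot F$ and $C\cdot\langle p\rangle$ have the same support, and the multiplicities differ by $\deg(\mathcal{F}|_{C_{t}})=p^{2\cdot\frac{n-1}{2}}=p^{n-1}$. This is precisely the $p^{n-1}$ that the factorisation produced. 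Your proposal never finds this factorisation, so it has no candidate for what scalar the supersingular Frobenius should reduce to, and hence no way to close the argument.

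Two smaller points. First, your claim that for $n$ even an $(n-1)$-dimensional cycle supported on $\overline{\mathscr{S}}_{K}^{ss}\times\overline{\mathscr{S}}_{K}^{ss}$ would ``vanish automatically'' by dimension reasons is not the mechanism of \cite{BW1}; what is used there is the density of the ordinary locus of $p\textrm{-}\mathscr{I}sog$, so that $\textrm{cl}\circ\textrm{ord}$ is the identity on the whole cycle algebra. Second, the paper in fact only proves the \emph{geometric} congruence relation (Conjecture~\ref{rdcconjpisog}) and records the implication to the cohomological statement as coming from functoriality; your proposal spends effort on compactifications and nearby cycles to set up the cohomological reduction, which is fine but orthogonal to where the real difficulty lies.
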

This equation makes sense since the action of Galois commutes with
that of $\mathcal{H}(G(\mathbb{Q}_{p})//K_{p})$. In the PEL case,
an integral model over $\mathcal{O}_{E_{\mathfrak{p}}}$ can be defined
explicitely, and the cohomology of $\overline{Sh_{K}}=Sh_{K}\times\kappa(\mathcal{O}_{E_{\mathfrak{p}}})$
coincides in many cases with that of $Sh_{K}\times E$.

\medskip{}

In the case of Shimura curves, this conjecture was proved by Eichler,
Shimura and Ihara, and was used to determine completely the eigenvalues
of $\textrm{Fr}_{\mathfrak{p}}$ acting on $H_{et}^{i}(Sh_{K}\times_{E}\overline{\mathbb{Q}},\mathbb{Q}_{l})$.
More general situations have been dealt with: T. Wedhorn proved conjecture
\ref{rdcconj} in the PEL case for groups that are split over $\mathbb{Q}_{p}$
in \cite{W1}, O. Bültel for certain orthogonal groups in\cite{bult},
and they worked out together the unitary case of signature $(n-1,1)$
with $n$ even in \cite{BW1}.

\medskip{}

In the latter article, the authors use a moduli space $p\textrm{-}\mathscr{I}sog$
which parametrizes $p$-isogenies between points of $Sh_{K}$. It
was first introduced in \cite{fc}. It comes with two maps $s,t$
to $Sh_{K}$, associating an isogeny to its source and target respectively.
For any field $L$ with a map $\mathcal{O}_{E_{\mathfrak{p}}}\rightarrow L$,
we consider the $\mathbb{Q}$-algebra of cycles in $p\textrm{-}\mathscr{I}sog\times L$,
where multiplication is defined by composition of isogenies, and we
denote by $\mathbb{Q}\left[p\textrm{-}\mathscr{I}sog^{ord}\times L\right]$
the subalgebra generated by the irreducible components.

\medskip{}

In \cite{Wed1999} and \cite{BM}, the authors define the $\mu$-ordinary
locus in the good reduction of a PEL Shimura variety. It is at the
same time a Newton polygon stratum and an Ekedahl-Oort stratum. Furthermore,
it posesses a unique isomorphism class of $p$-divisible groups. It
can also be defined as the unique open stratum in each of these stratifications.
We will denote by $\overline{Sh_{K}}^{ord}$ the ordinary locus. Define
the ordinary locus $p\textrm{-}\mathscr{I}sog^{ord}\times\kappa(\mathcal{O}_{E_{\mathfrak{p}}})$
of $p\textrm{-}\mathscr{I}sog\times\kappa(\mathcal{O}_{E_{\mathfrak{p}}})$
by taking inverse image by $s$ (or $t$). Finally define $\mathbb{Q}\left[p\textrm{-}\mathscr{I}sog^{ord}\times\kappa(\mathcal{O}_{E_{p}})\right]$
in the same fashion as hereabove. We have a diagram of $\mathbb{Q}$-algebra
homomorphisms:

\[
\xymatrix{\mathcal{H}_{0}(G(\mathbb{Q}_{p})//K_{p})\ar[dd]_{\dot{S}}\ar[r]^{h^{0}} & \mathbb{Q}\left[p\textrm{-}\mathscr{I}sog\times E\right]\ar[d]^{\sigma}\\
 & \mathbb{Q}\left[p\textrm{-}\mathscr{I}sog\times\kappa(\mathcal{O}_{E_{\mathfrak{p}}})\right]\ar@<1ex>[d]^{\textrm{ord}}\\
\mathcal{H}_{0}(M(\mathbb{Q}_{p})//(K_{p}\cap M(\mathbb{Q}_{p})))\ar[r]_{\overline{h}} & \mathbb{Q}\left[p\textrm{-}\mathscr{I}sog^{ord}\times\kappa(\mathcal{O}_{E_{\mathfrak{p}}})\right]\ar@<1ex>[u]^{\textrm{cl}}
}
\]
The big square is commutative. Here $M\subset G_{\mathbb{Q}_{p}}$
is the centralizer of the norm of the minuscule coweight $\mu$ of
$G$ associated to $(G,X)$. The algebras on the left hand side of
the diagram are subalgebras of the Hecke algebras containing functions
with integral support. The morphism $\dot{S}$ is a twisted version
of the Satake homomorphism. The map $\sigma$ is specialization of
cycles, the map $\textrm{ord}$ intersects a cycle in $p\textrm{-}\mathscr{I}sog\times\kappa(\mathcal{O}_{E_{\mathfrak{p}}})$
with the ordinaru locus, and the map $\textrm{cl}$ is simply defined
by taking the closure of a cycle of $p\textrm{-}\mathscr{I}sog^{ord}\times\kappa(\mathcal{O}_{E_{\mathfrak{p}}})$.
There is a natural Frobenius section of $s$ defined on $\overline{Sh_{K}}$,
defined by mapping an abelian variety to its Frobenius isogeny, which
produces a closed subscheme $F$ of $p\textrm{-}\mathscr{I}sog\times\kappa(\mathcal{O}_{E_{p}})$.
This subscheme is ordinary, in the sense that $\textrm{cl}\circ\textrm{ord}(F)=F$.
In this context, by ``congruence relation'' we mean the conjecture:
\begin{conjecture}
\label{rdcconjpisog}Consider the polynomial $H_{\mathfrak{p}}$ inside
$\mathbb{Q}\left[p\textrm{-}\mathscr{I}sog\times\kappa(\mathcal{O}_{E_{p}})\right]$
via the morphism $\sigma\circ h^{0}$. The element $F$ lies in the
center of this ring and the following relation holds:\textup{
\[
H_{\mathfrak{p}}(F)=0
\]
}
\end{conjecture}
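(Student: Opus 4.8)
The plan is to establish both assertions of Conjecture~\ref{rdcconjpisog} --- the centrality of $F$ and the relation $H_{\mathfrak{p}}(F)=0$ --- by analysing the cycle $H_{\mathfrak{p}}(F)$ through the Newton stratification of $p\textrm{-}\mathscr{I}sog\times\kappa(\mathcal{O}_{E_{\mathfrak{p}}})$. Every irreducible cycle occurring in $H_{\mathfrak{p}}(F)$ has dimension $\dim Sh_{K}=n-1$, since $F$ and all the correspondences $\sigma\circ h^{0}(c)$ are of that dimension. So it suffices to show, first, that the image $\textrm{ord}(H_{\mathfrak{p}}(F))$ in $\mathbb{Q}\left[p\textrm{-}\mathscr{I}sog^{ord}\times\kappa(\mathcal{O}_{E_{\mathfrak{p}}})\right]$ vanishes, which forces $H_{\mathfrak{p}}(F)$ to be a $\mathbb{Q}$-linear combination of those $(n-1)$-dimensional irreducible components of $p\textrm{-}\mathscr{I}sog\times\kappa(\mathcal{O}_{E_{\mathfrak{p}}})$ that do not meet the ordinary locus, and second, that the coefficient along each such component is zero. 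The vanishing of $\textrm{ord}(H_{\mathfrak{p}}(F))$ is the ``easy'' direction and is insensitive to the parity of $n$; the second point is the whole difficulty, because when $n$ is odd the ordinary locus of $p\textrm{-}\mathscr{I}sog$ is not dense, so these full-dimensional non-ordinary components genuinely exist and, in contrast with the case $n$ even of \cite{BW1}, one cannot conclude by a density argument.

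For the first point I would follow Wedhorn and B\"ultel--Wedhorn. As $\textrm{ord}$ is a ring homomorphism (restriction of cycles to an open subscheme) and the big square of the diagram commutes, one has $\textrm{ord}\circ\sigma\circ h^{0}=\overline{h}\circ\dot{S}$; moreover $F$ is ordinary, i.e. $\textrm{cl}\circ\textrm{ord}(F)=F$, and $\textrm{ord}(F)$ corresponds, via $\overline{h}$, to a specific Frobenius element of $\mathcal{H}_{0}(M(\mathbb{Q}_{p})//(K_{p}\cap M(\mathbb{Q}_{p})))$. One then computes the twisted Satake transform $\dot{S}(H_{\mathfrak{p}})$ explicitly in terms of the minuscule coweight $\mu$ and the Levi $M$, and checks that evaluating this polynomial at the Frobenius element gives zero: over $\overline{Sh_{K}}^{ord}$ the crystal of the universal abelian variety carries a slope filtration reflecting the $M$-structure, on a graded piece of which the Frobenius correspondence acts with characteristic polynomial (a suitable factor of) $H_{\mathfrak{p}}$, so the identity in question is Cayley--Hamilton. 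Hence $\textrm{ord}(H_{\mathfrak{p}}(F))=\overline{h}(\dot{S}(H_{\mathfrak{p}})(\textrm{ord}(F)))=0$. The same Satake computation shows $\textrm{ord}(F)$ is central, which together with the local analysis below yields the centrality of $F$.

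For the second point I would use the description of the supersingular locus of $\overline{Sh_{K}}$ from \cite{V1,V2}: for $n$ odd and $p$ inert its irreducible components are closures of Deligne--Lusztig varieties for finite unitary groups, naturally indexed by vertices of a fixed type in the Bruhat--Tits building of the inner form $J$ of $G_{\mathbb{Q}_{p}}$ attached to the basic isocrystal, with incidences governed by adjacency in the building. A preliminary step is to deduce from this that every $(n-1)$-dimensional component of $p\textrm{-}\mathscr{I}sog\times\kappa(\mathcal{O}_{E_{\mathfrak{p}}})$ disjoint from the ordinary locus lies over the supersingular locus, and then to describe these components, together with the composition product and the two projections $s,t$, as an explicit correspondence space built from Hecke operators between the above Deligne--Lusztig varieties; this is where the Rapoport--Zink uniformization of the supersingular locus enters, replacing the global question by a local one on which $J(\mathbb{Q}_{p})$ acts and on which $F$ becomes an explicit element. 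On the local side one decomposes the specialized Hecke correspondences $\sigma\circ h^{0}(c)$ into ``partial Frobenius'' cycles supported on the Deligne--Lusztig strata, establishes the commutation and composition identities among these cycles and the Frobenius and Verschiebung correspondences ($F$ and $V$ commute, $FV=VF$ equals $p$ times a central cycle, and so on), and concludes $H_{\mathfrak{p}}(F)=0$ by the same Cayley--Hamilton manipulation as in the ordinary case, now carried out stratum by stratum.

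The main obstacle is this last ingredient. Away from the ordinary locus the projections $s,t$ restricted to $p\textrm{-}\mathscr{I}sog$ are far from \'etale, their fibres over supersingular points being positive-dimensional, so the naive decomposition of the minuscule Hecke correspondence into a Frobenius part and a Verschiebung part acquires correction terms supported on the boundary strata of the Deligne--Lusztig varieties, possibly with non-reduced structure, and the required composition identities hold only once these corrections are computed. This forces one to control the fine intersection theory of the Deligne--Lusztig components and of their configuration in the building of $J$: dimensions of intersections of adjacent components, self-intersections, and their behaviour under the Hecke correspondences. This is exactly the geometric input provided by \cite{V1,V2}. Once these identities are in place, $H_{\mathfrak{p}}(F)=0$ follows formally by combining them with the ordinary computation, which proves Conjecture~\ref{rdcconjpisog} for $GU(n-1,1)$ with $n$ odd and $p$ inert; Conjecture~\ref{rdcconj} in this case then follows via the cycle class map and the comparison between the cohomology of $\overline{Sh_{K}}$ and that of $Sh_{K}\times_{E}\overline{\mathbb{Q}}$.
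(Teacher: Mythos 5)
Your high-level reduction is right and matches the paper's: use Moonen's ordinary congruence relation to kill the part of $H_{\mathfrak{p}}(F)$ that meets the ordinary locus, and observe via Proposition~\ref{comppisog} that what remains is supported on supersingular components of $p\textrm{-}\mathscr{I}sog\times\kappa(\mathcal{O}_{E_{p}})$. Your identification of the source of difficulty — the non-density of the ordinary locus for $n$ odd — is also exactly the right diagnosis.

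However, there is a genuine gap at the step you yourself flag as the main obstacle. Your plan for the supersingular part is to work out, through the Rapoport--Zink uniformization, a decomposition of the specialized Hecke correspondences into ``partial Frobenius'' cycles on the Deligne--Lusztig strata, establish the full algebra of composition and commutation identities among them, control corrections from non-reduced and positive-dimensional fibres of $s,t$, and then run Cayley--Hamilton ``stratum by stratum.'' You never say what would make this tractable, and in fact it would require computing the restriction of every coefficient of $H_{\mathfrak{p}}$ to the supersingular locus, together with a fine intersection calculus of components of $\mathcal{N}'_{\mathrm{red}}$. This is not the route the paper takes, and it is not clear it can be carried out.

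The ingredient you are missing is an algebraic factorization of the Hecke polynomial, proved before any geometry: $H_{p}(t)=R(t)\cdot(t-p^{n-1}1_{pK_{p}})$ in $\mathcal{H}(G(\mathbb{Q}_{p})//K_{p})[t]$ (Theorem~\ref{facteurH}). Combined with Moonen's ordinary vanishing, this reduces everything to showing that the single linear factor $(F-p^{n-1}\langle p\rangle)$ annihilates each supersingular irreducible component $C$ of $p\textrm{-}\mathscr{I}sog\times\kappa(\mathcal{O}_{E_{p}})$ under right composition. That statement does not require any Cayley--Hamilton argument, any enumeration of Hecke correspondences on Deligne--Lusztig strata, or any careful bookkeeping of correction terms: one only has to compare the two cycles $C\cdot F$ and $C\cdot\langle p\rangle$. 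After shrinking $K^{p}$ so that $(s,t)$ is a closed immersion on the supersingular isogeny space (Theorem~\ref{stimfer}), one checks that $C\cdot F$ and $C\cdot\langle p\rangle$ have the same support, using $\mathcal{F}(C_{t})=\langle p\rangle C_{t}$ on supersingular components of $\overline{Sh_{K}}$ (Proposition~\ref{pCFC}) and the uniqueness of a supersingular component with prescribed $(s,t)$-images (Proposition~\ref{compunik}); and then the multiplicities differ by the factor $\deg(\mathcal{F}|_{C_{t}})=p^{2\cdot\frac{n-1}{2}}=p^{n-1}$, since $C_{t}$ has dimension $\frac{n-1}{2}$. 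This degree count is the whole supersingular argument, and it sidesteps exactly the intersection-theoretic complications that you (correctly) identify as the obstacle to your version of the plan. The centrality of $F$ is likewise obtained by the elementary commutation $F\cdot\mathcal{F}(C)=C\cdot F$ (Proposition~\ref{FCCF}) together with Frobenius-invariance of cycles defined over $\kappa(\mathcal{O}_{E_{p}})$, not from the Satake computation.

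So: your reduction and your reliance on \cite{V1,V2} and Rapoport--Zink uniformization are in the spirit of the paper, but your proposal lacks the factorization of $H_{p}$ and the direct support-and-degree comparison of $C\cdot F$ with $C\cdot\langle p\rangle$, and without these the supersingular step as you describe it remains an unproved program rather than a proof.
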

It is related to conjecture \ref{rdcconj} by using functorial properties
of cohomology. The geometric relation $H_{\mathfrak{p}}(F)=0$ implies
the same equality on the cohomology. B. Moonen has shown the ``ordinary''
congruence relation:
\begin{thm}
\label{moonenRDCord}Consider the polynomial $H_{\mathfrak{p}}$ inside
$\mathbb{Q}\left[p\textrm{-}\mathscr{I}sog^{ord}\times\kappa(\mathcal{O}_{E_{p}})\right]$
via the morphism \textup{$\textrm{ord}\circ\sigma\circ h^{0}$}. In
this ring, the following relation holds:\textup{
\[
H_{\mathfrak{p}}(F)=0
\]
}
\end{thm}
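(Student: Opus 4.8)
Here is how I would approach the proof (this is essentially Moonen's argument in \cite{BM}).

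The plan is to transport the identity to the Hecke algebra of the Levi $M$ and to reduce it to a purely local computation, using the commutativity of the big square together with the Serre--Tate theory of the $\mu$-ordinary locus. First I would pin down the structure of $\mathbb{Q}[p\textrm{-}\mathscr{I}sog^{ord}\times\kappa(\mathcal{O}_{E_{\mathfrak p}})]$. Over the $\mu$-ordinary locus the universal $p$-divisible group, with its $\mathcal{O}_{B}$-action and polarization, admits a canonical slope filtration whose graded pieces are isoclinic and become, \'etale-locally, isogenous to constant $p$-divisible groups, the extensions being governed by Serre--Tate type coordinates. Consequently a $p$-isogeny between two $\mu$-ordinary points is, modulo the canonical filtration, nothing but an isogeny of $F$-isocrystals carrying an $M$-structure; this is what makes $\overline{h}\colon \mathcal{H}_{0}(M(\mathbb{Q}_{p})//(K_{p}\cap M(\mathbb{Q}_{p})))\to\mathbb{Q}[p\textrm{-}\mathscr{I}sog^{ord}\times\kappa(\mathcal{O}_{E_{\mathfrak p}})]$ surjective, and it identifies $F$ with $\overline{h}(c_{F})$ for an explicit, integrally supported double coset $c_{F}$ of $M(\mathbb{Q}_{p})$, namely the one cut out by the minuscule coweight $\mu$ read inside $M$ (the Levi-theoretic incarnation of the fact that crystalline Frobenius acts through $\mu$ at an ordinary point). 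Centrality of $F$ in this ring is automatic, since Frobenius commutes functorially with every morphism.

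With this in hand, commutativity of the big square gives $\textrm{ord}\circ\sigma\circ h^{0}=\overline{h}\circ\dot{S}$, so the class of $H_{\mathfrak p}$ in $\mathbb{Q}[p\textrm{-}\mathscr{I}sog^{ord}\times\kappa(\mathcal{O}_{E_{\mathfrak p}})]$ equals $\overline{h}(\dot{S}(H_{\mathfrak p}))$, where $\dot{S}$ is applied to coefficients to produce a polynomial over $\mathcal{H}_{0}(M(\mathbb{Q}_{p})//(K_{p}\cap M(\mathbb{Q}_{p})))$. It therefore suffices to prove $\dot{S}(H_{\mathfrak p})(c_{F})=0$ already in the $M$-Hecke algebra. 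Here I would invoke the Blasius--Rogawski description of $H_{\mathfrak p}$: under the Satake isomorphism for $G$ it is the characteristic polynomial $\det(X-r_{-\mu})$ of the minuscule representation $r_{-\mu}$ of ${}^{L}G$. Since $\dot{S}$ is, up to the power of $p$ built into its normalization, the restriction homomorphism $R(\widehat{G})\to R(\widehat{M})$ along the Levi $\widehat{M}\hookrightarrow\widehat{G}$, the polynomial $\dot{S}(H_{\mathfrak p})$ is (a renormalization of) $\det(X-r_{-\mu}|_{\widehat{M}})$. As $\mu$ is minuscule, this polynomial splits, over a suitable extension of the $M$-Hecke algebra, into linear factors indexed by the weights of $r_{-\mu}$, and the factor attached to the extreme weight $-\mu$ is, after the renormalization and passage through the Satake isomorphism for $M$, exactly $X-c_{F}$. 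Hence $\dot{S}(H_{\mathfrak p})(c_{F})=0$, and applying $\overline{h}$ yields $H_{\mathfrak p}(F)=0$. (In the simplest case $G=GL_{2}$ this is just the relation $X^{2}-T_{p}X+p$ vanishing at the Frobenius isogeny, because $T_{p}=F+V$ and $VF=p$ over the ordinary locus.)

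The hard part is the first step: setting up the Serre--Tate theory of the $\mu$-ordinary locus with enough precision to identify $\mathbb{Q}[p\textrm{-}\mathscr{I}sog^{ord}\times\kappa(\mathcal{O}_{E_{\mathfrak p}})]$ as a quotient of $\mathcal{H}_{0}(M)$ and to pin down $c_{F}$ as the coset of the Hodge cocharacter. Secondary to this is the local bookkeeping in the last step, namely checking that the $p$-power normalization entering $\dot{S}$ is exactly the one that turns the $(-\mu)$-weight factor of $\det(X-r_{-\mu})$ into the integrally supported operator $c_{F}$, rather than a twist of it; once the two normalizations are matched, the vanishing is formal.
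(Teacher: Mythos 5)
The paper does not prove Theorem~\ref{moonenRDCord} at all: it is stated twice (once in the introduction and once after diagram~(\ref{diagcommutatif})) and both times explicitly attributed to Moonen in \cite{BM}, then used as a black box in the proof of Theorem~\ref{rdcfinal}. So there is no internal proof against which to compare your attempt; the relevant comparison is with Moonen's argument, which you yourself flag.

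As an account of that external argument, your sketch is broadly faithful: the slope filtration and Serre--Tate theory over the $\mu$-ordinary locus, the resulting description of ordinary $p$-isogenies via $M$-data, the commutativity $\textrm{ord}\circ\sigma\circ h^{0}=\overline{h}\circ\dot{S}_{M}^{G}$, the identification of $F$ with a specific integrally supported double coset in $M(\mathbb{Q}_p)$ coming from the Hodge cocharacter, and the interpretation of $\dot{S}_{M}^{G}$ as (a normalized) restriction $R(\widehat{G})\to R(\widehat{M})$ from which the $\mu$-weight linear factor of $\det\bigl(X-r(g(\sigma\cdot g))\bigr)$ gives the vanishing --- these are indeed the pillars of Moonen's proof. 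Two small caveats worth keeping in mind: (i) you do not actually need $\overline{h}$ to be surjective, only that $F$ lies in its image and that $\overline{h}$ is a ring homomorphism, which is what Moonen establishes; and (ii) the centrality of $F$ is not ``automatic by functoriality'' in the full cycle algebra $\mathbb{Q}\bigl[p\textrm{-}\mathscr{I}sog\times\kappa(\mathcal{O}_{E_{p}})\bigr]$ --- that is precisely what Proposition~\ref{FCCF} and the corollary following~(\ref{injkf}) in this paper establish --- though on the ordinary locus it does come for free because $\mathbb{Q}\bigl[p\textrm{-}\mathscr{I}sog^{ord}\times\kappa(\mathcal{O}_{E_{p}})\bigr]$ sits inside a commutative Hecke algebra image. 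You are also right that the genuinely delicate part is matching the $p$-power normalization in $\dot{S}$ against the coefficient of the $\mu$-weight factor so that it lands exactly on $X-c_F$; that is where the real content of Moonen's computation lives, and your sketch acknowledges it without carrying it out.
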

When the ordinary locus of $p\textrm{-}\mathscr{I}sog\times\kappa(\mathcal{O}_{E_{p}})$
is dense, this theorem is equivalent to the congruence relation. This
condition is satisfied in almost all the examples where conjecture
\ref{rdcconjpisog} is known. In the unitary similitude case, this
density condition is satisfied if and only if $n$ is even. As Moonen
points out, the condition fails for Hilbert-Blumenthal varieties.
However, conjecture \ref{rdcconjpisog} can be proved in this case
by the following argument. The Hecke polynomial factors into a product
$H_{\mathfrak{p}}=PQ$ such that $P$ annihilates $F$ on the ordinary
locus, and $P$ is ordinary, that is $\textrm{cl}\circ\textrm{ord}(P)=P$.
The result follows immediately.

\medskip{}

In this article, we prove conjecture \ref{rdcconjpisog} in the unitary
similitude case $GU(n-1,1)$ when $n$ is odd. We first show that
the Hecke polynomial factors into a product $H_{p}(t)=R(t)\cdot(t-p^{n-1}1_{pK_{p}})$.
Let $H'_{p}$ and $R'$ be the polynomials obtained by applying the
map $\textrm{cl}\circ\textrm{ord}$ to $H_{p}$ and $R$. We get:
$\left(H_{p}-H'_{p}\right)(t)=\left(R-R'\right)(t)\cdot\left(t-p^{n-1}\left\langle p\right\rangle \right)$,
where $\left\langle p\right\rangle $ is the multiplication-by-$p$
cycle. It is the image of $1_{pK_{p}}$ by $\sigma\circ h^{0}$. Because
of theorem \ref{moonenRDCord}, we have $H'_{p}(F)=0$ inside $\mathbb{Q}\left[p\textrm{-}\mathscr{I}sog\times\kappa(\mathcal{O}_{E_{p}})\right]$.
Therefore, conjecture \ref{rdcconjpisog} boils down to $\left(H_{p}-H'_{p}\right)(F)=0$.
The factor $\left(R-R'\right)(F)$ is a cycle formed only of supersingular
components of $p\textrm{-}\mathscr{I}sog\times\kappa(\mathcal{O}_{E_{p}})$.
The final argument is the following result:
\begin{thm*}
Let $C\subset p\textrm{-}\mathscr{I}sog\times\kappa(\mathcal{O}_{E_{p}})$
be a supersingular irreducible component. Then:
\[
C\cdot(F-p^{n-1}\left\langle p\right\rangle )=0
\]
inside \textup{$\mathbb{Q}[p\textrm{-}\mathscr{I}sog\times\kappa(\mathcal{O}_{E_{p}})]$.}
\end{thm*}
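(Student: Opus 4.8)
The plan is to reduce the asserted identity to a single correspondence‑product computation: to show that
$C\cdot F=p^{\,n-1}\,(C\cdot\langle p\rangle)$, where $\langle p\rangle=\sigma\circ h^{0}(1_{pK_{p}})$ is the multiplication‑by‑$p$ cycle. Since $\langle p\rangle$ is central and the product is $\mathbb{Q}$‑linear, this is equivalent to $C\cdot(F-p^{\,n-1}\langle p\rangle)=0$. Before starting I would collect the input needed from \cite{V1,V2}: that the supersingular locus $\overline{Sh_{K}}^{ss}$ is, up to nilpotents, a union of Deligne--Lusztig varieties which for $n$ odd is pure of dimension $\tfrac{n-1}{2}$, together with the analogous description of the supersingular part of $p\textrm{-}\mathscr{I}sog\times\kappa$; in particular that for a supersingular component $C$ the images $s(C)$ and $t(C)$ are closures of top‑dimensional supersingular strata, hence of dimension $\tfrac{n-1}{2}$.

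The geometric heart of the argument is an elementary observation about $F$ over the supersingular locus. Write $q=p^{2}$. If $A$ is a supersingular abelian variety carrying the PEL datum (over a perfect field containing $\mathbb{F}_{q}$), its isocrystal is isoclinic of slope $\tfrac12$, so after inverting $p$ it is isomorphic to a power of the simple isocrystal of slope $\tfrac12$, on which $F^{2}=p$; hence $F^{2}M=pM$ on the Dieudonn\'e lattice $M$, which translates into the statement that the relative $q$‑Frobenius $\mathrm{Fr}_{A}\colon A\to A^{(q)}$ has kernel exactly $A[p]$. Consequently $\mathrm{Fr}_{A}=\iota_{A}\circ[p]_{A}$ for a unique isomorphism $\iota_{A}\colon A\xrightarrow{\ \sim\ }A^{(q)}$, and $\iota_{A}$ respects the PEL structure: it preserves the polarization because both $\mathrm{Fr}_{A}$ and $[p]_{A}$ scale it by $q$, it is $\mathcal{O}$‑linear by functoriality of Frobenius, and it respects the prime‑to‑$p$ level structure up to the similitude $p^{-1}\in\prod_{\ell\neq p}\mathbb{Z}_{\ell}^{\times}$, which is absorbed by $K^{p}$. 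Moreover $\iota_{A}$ varies in families over $\overline{Sh_{K}}^{ss}$. In particular, over $\overline{Sh_{K}}^{ss}$ the Frobenius section and the section $A\mapsto[p]_{A}$ have the same image, whereas over the $\mu$‑ordinary locus $\mathrm{Fr}_{A}$ has a multiplicative kernel $\neq A[p]$; the theorem quantifies precisely this difference.

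Granting this, I would compute $C\cdot F$ as the pushforward, along composition of isogenies, of the fibre product $F\times_{t,\,\overline{Sh_{K}},\,s}C$ (matching the target of a Frobenius isogeny with the source of an isogeny in $C$). Since $s(C)\subset\overline{Sh_{K}}^{ss}$, the matching condition forces this fibre product to live over $\overline{Sh_{K}}^{ss}$, where — under the identification $F\cong\overline{Sh_{K}}$ of the Frobenius section with its source — the map $t$ becomes the relative $q$‑Frobenius $\Phi$ of $\overline{Sh_{K}}^{ss}$. Because $\dim\overline{Sh_{K}}^{ss}=\tfrac{n-1}{2}$ and $\dim s(C)=\tfrac{n-1}{2}$, the purely inseparable morphism $\Phi$ has generic inseparable degree $q^{(n-1)/2}=p^{\,n-1}$ over the generic point of $s(C)$: this is the origin of the factor $p^{\,n-1}$. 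Using $\mathrm{Fr}_{A}=\iota_{A}\circ[p]_{A}$ and the fact that $\iota_{A}$ is a PEL isomorphism, the composition map on this fibre product is identified, up to the automorphism of $p\textrm{-}\mathscr{I}sog$ induced by $\iota$, with the composition map computing $C\cdot\langle p\rangle$ — which involves no Frobenius and hence no inseparable degree. Tracking the purely inseparable degrees (those carried by the fibre‑product cycle and those appearing in the pushforward), one checks they multiply to exactly $p^{\,n-1}$, so $C\cdot F=p^{\,n-1}(C\cdot\langle p\rangle)$, which gives the claim.

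The step I expect to be the main obstacle is this last multiplicity bookkeeping: one must control the scheme structure of $F\times_{\overline{Sh_{K}}^{ss}}C$ and of the composition map precisely enough to see that the several purely inseparable degrees recombine into the clean value $p^{\,n-1}$, rather than into $p^{\,2(n-1)}$ or into $q^{\dim s(C)}$ with $\dim s(C)<\tfrac{n-1}{2}$. This is exactly where the structural results of \cite{V1,V2} — that the relevant supersingular components are governed by Deligne--Lusztig varieties of the expected dimension $\tfrac{n-1}{2}$, and that $s,t$ restrict to dominant maps onto top‑dimensional supersingular strata — are indispensable, and it is also where the hypotheses ``$p$ inert'' and ``$n$ odd'' enter in an essential way.
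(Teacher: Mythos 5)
Your reduction to the identity $C\cdot F=p^{\,n-1}(C\cdot\langle p\rangle)$ and your final degree count (the $q$-Frobenius on a $\tfrac{n-1}{2}$-dimensional supersingular component has degree $q^{(n-1)/2}=p^{n-1}$, while $\langle p\rangle$ has degree $1$) match the paper. But the ``geometric heart'' of your argument rests on a false claim. From the fact that the isocrystal of a supersingular $A$ is isoclinic of slope $\tfrac12$ you infer $F^{2}M=pM$ on the Dieudonn\'e \emph{lattice} $M=\mathbb{D}(A)$, hence $\ker(\mathrm{Fr}_{A,q})=A[p]$ and a factorization $\mathrm{Fr}_{A,q}=\iota_{A}\circ[p]_{A}$ with $\iota_{A}$ an isomorphism. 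The slope condition is a statement about $M\otimes W_{\mathbb{Q}}$, not about $M$: one has $F^{2}M=pM$ (equivalently $FM=VM$) if and only if $A$ is \emph{superspecial}. In the present situation the generic point of every supersingular component has Ekedahl--Oort type $n$, i.e.\ Dieudonn\'e space $\overline{\mathbb{B}(n)}$, which for $n\geq 3$ is not superspecial ($F\overline{M}\neq V\overline{M}$). So the pointwise isomorphism $\iota_{A}$ does not exist on a dense open subset of $s(C)$, and the identification of the two fibre products that you build on it collapses.

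What is actually true, and what the paper proves, is the weaker \emph{component-level} statement $\mathcal{F}(\widetilde{C})=\langle p\rangle\widetilde{C}$ for each irreducible component $\widetilde{C}$ of $\overline{Sh_{K}}^{ss}\times\overline{\mathbb{F}}$ (Proposition \ref{pCFC}); its proof is not a pointwise factorization but an argument inside the Rapoport--Zink space: for $x$ of type $n$ the fibre $t_{2}^{-1}(x)$ has a unique top-dimensional irreducible component, and both the isogeny $p:\langle p^{-1}\rangle x\to x$ and the Verschiebung $V:\langle p^{-2}\rangle Fx\to x$ lie in it. Two further ingredients are then needed which your sketch omits: (i) knowing $s(C\cdot F)=s(C\cdot\langle p\rangle)$ and $t(C\cdot F)=t(C\cdot\langle p\rangle)$ does not by itself identify the supports $|C\cdot F|$ and $|C\cdot\langle p\rangle|$ --- one needs $(s,t)$ to be a closed immersion on the supersingular locus, which holds only after shrinking $K^{p}$ (Theorem \ref{stimfer}, via a positive-definite quadratic form on $\mathrm{Hom}(A_{0},A_{1})$), followed by a pushforward along the \'etale covering to return to level $K$ (Lemma \ref{piF}); and (ii) the multiplicity comparison must be organized through the isomorphisms $C\times_{t,s}F\cong C\cong C\times_{t,s}\langle p\rangle$ so that the ratio of degrees is exactly $\deg(\mathcal{F}|_{C_{t}})/\deg(\langle p\rangle|_{C_{t}})=p^{n-1}$, which is where your worry about getting $p^{2(n-1)}$ is resolved. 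As written, your proof has a genuine gap at its central step.
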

We will now give an overview on how this article is organized. In
the first section, we establish the factorization of the Hecke polynomial.
In the second one, we give the main facts concerning the Shimura variety.
All of them can be found in \cite{BW1}, we choose to repeat them
here for the sake of clarity. Section 3 is dedicated to the moduli
space of $p$-isogenies. Section 4 studies the supersingular locus
of $p\textrm{-}\mathscr{I}sog\times\kappa(\mathcal{O}_{E_{p}})$.
Here we use mainly \cite{V1,V2} and also \cite{BW1} for some key
results. Finally, in section 5, we prove conjecture \ref{rdcconjpisog}.

\section*{Notations}
\begin{enumerate}
\item We fix $n\geq3$ an \textbf{odd integer}, $k=\frac{n+1}{2}$ and $p>2$
a prime. Let $\mathbb{\overline{Q}}_{p}$ be an algebraic closure
of $\mathbb{Q}_{p}$. We denote by $\mathbb{\overline{Q}}$ the algebraic
closure of $\mathbb{Q}$ inside $\mathbb{C}$. We fix an embedding
$\mathbb{\overline{Q}}\hookrightarrow\mathbb{\overline{Q}}_{p}$. 
\item Let $E$ be an imaginary quadratic extension of $\mathbb{Q}$, such
that $p$ is \textbf{inert} in $E$. We write $\sigma:x\mapsto\overline{x}$
for the non trivial automorphism of $E$ and $E_{p}$ for the completion
of $E$ at $p$. Let $\mathcal{O}_{E_{p}}$ be the ring of integers
of $E_{p}$ and $\kappa(\mathcal{O}_{E_{p}})=\frac{\mathcal{O}_{E_{p}}}{p\mathcal{O}_{E_{p}}}$
the residual field.
\item We fix an embedding $\vartheta:E\hookrightarrow\mathbb{\mathbb{\overline{Q}}}$.
We denote by $\overline{\mathbb{F}}$ the algebraic closure of $\kappa(\mathcal{O}_{E_{p}})$
provided by the embedding $E\hookrightarrow\mathbb{\overline{Q}}_{p}$.
We choose an element $\alpha\in E^{\times}\cap\mathcal{O}_{E_{p}}^{\times}$
such that the imaginary part of $\alpha$ is $>0$ and $\alpha+\overline{\alpha}=0$.
If $z\in\mathbb{C}$ and $z=a+\alpha b$, $a,b\in\mathbb{R}$, we
call $b$ the $\alpha$-imaginary part of $z$.
\item $(V,\psi)$ is a hermitian space of dimension $n$, i.e $V$ is an
$n$-dimensional $E$-vector space, and $\psi:V\times V\rightarrow E$
a non degenerate hermitian pairing. \textbf{We assume the signature
of $(V,\psi)$ to be $(n-1,1)$.}
\item $G$ is the (connected, reductive) algebraic $\mathbb{Q}$-group of
unitary similitudes of $(V,\psi)$.
\item Let $\mathcal{B}_{W}=(e_{1},...,e_{n})$ be a Witt basis of $V\otimes\mathbb{Q}_{p}$.
This means $V\otimes\mathbb{Q}_{p}=V_{0}\oplus\bigoplus_{1\leq i\leq k}H_{i}$
is an orthogonal Witt decomposition, where $V_{0}=\textrm{Vect}_{E_{p}}(e_{k})$
is anisotropic, $H_{i}=\textrm{Vect}_{E_{p}}(e_{i},e_{n+1-i})$ is
a hyperbolic plane with $\psi(e_{i},e_{n+1-i})=1$.
\item In the basis $\mathcal{B}_{W}$, the diagonal matrices of $G_{\mathbb{Q}_{p}}$
form a torus $T$ and the upper-triangular matrices of $G$ a Borel
subgroup $B$ containing $T$. In the basis $\mathcal{B}$, an element
of $T(\mathbb{Q}_{p})$ has matrix $\textrm{diag}(x_{1},...,x_{n})\in GL_{n}(E_{p})$
with
\[
\overline{x_{1}}x_{n}=\overline{x_{2}}x_{n-1}...=\overline{x_{k}}x_{k}.
\]

\item Let $\Omega(T)$ be the Weyl group of $T$ over $\mathbb{Q}_{p}$.
It is the group of permutations of $\left\{ 1,...,n\right\} $ fixing
the equations above. Thus,
\[
\Omega(T)=\left\{ \sigma\in\mathfrak{S}_{n},\forall i\in\left\{ 1,...,n\right\} ;\sigma(i)+\sigma(n+1-i)=n\right\} .
\]

\item Let $\rho$ be the half-sum of positive roots with respect to $(B,T)$.
\item Let $\Lambda$ be the $\mathcal{O}_{E_{p}}$-lattice generated by
the $e_{i}$. \textbf{We assume that $\psi$ defines a perfect pairing}
$\Lambda\times\Lambda\rightarrow\mathcal{O}_{E_{p}}$. This amounts
to $\psi(e_{k},e_{k})\in\mathbb{Z}_{p}^{\times}$ and implies that
$\textrm{det}(\psi)=1\in\frac{\mathbb{Q}_{p}^{\times}}{N(E_{p}^{\times})}$.
\item Let $K_{p}=\textrm{Stab}_{G(\mathbb{Q}_{p})}(\Lambda)$, it is a hyperspecial
subgroup of $G(\mathbb{Q}_{p})$. We write $L=K_{p}\cap B(\mathbb{Q}_{p})$
and $T_{c}=K_{p}\cap T(\mathbb{Q}_{p})$.
\item Let $\varphi:V\times V\rightarrow\mathbb{Q}$ be the $\alpha$-imaginary
part of $\psi$. Then $\varphi$ is a skew-symmetric form such that
$\forall e\in E$, $\forall x,y\in V$, $\varphi(ex,y)=\varphi(x,\overline{e}y)$.
\end{enumerate}

\section{Hecke polynomial}

\subsection{Unitary similitude group}

There is an isomorphism $V\otimes_{\mathbb{Q}}E\simeq\bigoplus_{\tau\in\textrm{Gal}(E/\mathbb{Q})}V$.
The choice of $\textrm{id}\in\textrm{Gal}(E/\mathbb{Q})$ gives an
isomorphism:
\begin{equation}
G_{E}\simeq GL_{E}(V)\times\mathbb{G}_{m}.\label{dep}
\end{equation}
Let $\mathcal{B}$ be an $E$-basis of $V$ and let $J$ be the matrix
of $\psi$ in $\mathcal{B}$. The group $\textrm{Gal}(E/\mathbb{Q})=\{1,\sigma\}$
acts on $G(E)\simeq GL_{n}(E)\times E^{\times}$ by:

\begin{equation}
\sigma.(A,\lambda)=(\overline{\lambda}J(^{t}\bar{A}^{-1})J,\overline{\lambda}),\quad\forall(A,\lambda)\in GL_{n}(E)\times E^{\times}.\label{action}
\end{equation}

\subsection{Dual group}

For the diagonal torus $T_{n,\mathbb{Q}}\subset Gl_{n,\mathbb{Q}}$,
we denote by $\chi_{1},...,\chi_{n}$ (resp. $\mu_{1},...,\mu_{n}$)
the usual characters (resp. cocharacters) of $T_{n,\mathbb{Q}}$.
Let $\chi_{0}$ (resp. $\mu_{0}$) the character (resp. cocharacter)
of $T_{n,\mathbb{Q}}\times\mathbb{G}_{m,\mathbb{Q}}$ defined by $(A,x_{0})\mapsto x_{0}$
(resp. $x\mapsto(I_{n},x)$).

The dual group of $G$ is $\widehat{G}=GL_{n,\mathbb{C}}\times\mathbb{G}_{m,\mathbb{C}}$.
A splitting is a triplet $\Sigma=(\widehat{T},\widehat{B},\{X_{\alpha}\})$
where $(\widehat{B},\widehat{T})$ is a Borel pair of $\widehat{G}$
and $X_{\alpha}\in\textrm{Lie}(\widehat{G}\text{)}$ an eigenvector
for every simple root $\alpha$ of $\widehat{G}$. The $\textrm{Gal}(E/\mathbb{Q})$-action
on $\Psi(\widehat{G})=\Psi(G)^{\vee}$ lifts uniquely to an automorphism
of $\widehat{G}$ fixing $\Sigma$ (cf. \cite{BR}, section 1.6).
We make the following standard choices: 
\begin{align*}
 & \widehat{T}=\{\textrm{diagonal matrices}\}\times\mathbb{G}_{m,\mathbb{C}}\\
 & \widehat{B}=\{\textrm{upper-triangular matrices}\}\times\mathbb{G}_{m,\mathbb{C}}\\
 & \{X_{k}\}=(\delta_{i,k}\delta_{j,k+1})\textrm{ for }k=1,2,...,n-1.
\end{align*}
The vector $X_{k}$ lies in $\textrm{Lie}(\widehat{G})=M_{n}(\mathbb{C})\oplus\mathbb{C}$
and is an eigenvector for the simple root $\chi_{k}-\chi_{k+1}$ of
$\widehat{T}$. There is a unique non trivial automorphism of $\widehat{G}$
fixing $\Sigma$, giving the action of $\sigma$ on $\widehat{G}$:

\begin{equation}
\begin{array}{ccc}
\widehat{G} & \longrightarrow & \widehat{G}\\
(A,\lambda) & \longmapsto & (J'(^{t}A^{-1})J',det(A)\lambda)
\end{array}
\end{equation}
where $J'=((-1)^{i-1}\delta_{i,n+1-j}){}_{i,j}$ (cf. \textsl{ibid},
1.8(c)).

The choice of the basis $\mathcal{B}_{W}$ gives an identification
between $\left(G_{\overline{\mathbb{Q}_{p}}},T_{\overline{\mathbb{Q}_{p}}}\right)$
and $\left(GL_{n,\overline{\mathbb{Q}_{p}}}\times\mathbb{G}_{m,\overline{\mathbb{Q}_{p}}},T_{n,\overline{\mathbb{Q}_{p}}}\times\mathbb{G}_{m,\overline{\mathbb{Q}_{p}}}\right)$
through (\ref{dep}). We fix the identification $\Psi(\widehat{G},\widehat{T})\simeq\Psi(G,T)^{\vee}$
given by $\chi_{i}\leftrightarrow\mu_{i}$ for $i=0,...,n$. We also
identify $\widehat{T}$ and $\textrm{Hom}(X_{*}(T),\mathbb{C}^{\times})$
such that $\left(\textrm{diag}(x_{1},...,x_{n}),x_{0}\right)\in\widehat{T}$
corresponds to the map $\mu_{i}\mapsto x_{i}$.

\subsection{Shimura datum}

Choose an isomorphism $G_{\mathbb{R}}\simeq G_{J_{0},\mathbb{R}}$.
Consider the morphism $h:\mathbb{S}\rightarrow G_{\mathbb{R}}$ of
algebraic groups over $\mathbb{R}$ defined on $\mathbb{R}$-points
by:

\begin{equation}
\begin{array}{ccc}
\mathbb{S}(\mathbb{R})=\mathbb{C}^{\times} & \longrightarrow & G(\mathbb{R})\simeq G_{J_{0}}(\mathbb{R})\\
z & \longmapsto & \textrm{diag}(z,...,z,\overline{z})
\end{array}.\label{hdds}
\end{equation}
Let $X$ be the $G(\mathbb{R})$-conjugacy class of $h$. Then $(G,X)$
is a Shimura datum (\cite{milneintro} definition 5.5). Its reflex
field is $E$. Composing $h_{\mathbb{C}}$ on the right-hand side
by $\mathbb{G}_{m,\mathbb{C}}\hookrightarrow\prod_{\sigma\in\textrm{Gal}(\mathbb{C}/\mathbb{R})}\mathbb{G}_{m,\mathbb{C}}\simeq\mathbb{S}_{\mathbb{C}}$
(given by $\sigma=\textrm{Id}$) gives a cocharacter $\mu:\mathbb{G}_{m,\mathbb{C}}\rightarrow G_{\mathbb{C}}$.
Finally, write $\widehat{\mu}$ for the associated character of $\widehat{T}$
which is dominant relative to $\widehat{B}$. We have $\widehat{\mu}=\chi_{1}+...+\chi_{n-1}+\chi_{0}$
.

\subsection{The representation $r$}

Let $r$ be the irreducible representation of $GL_{n,\mathbb{C}}\times\mathbb{G}_{m,\mathbb{C}}$
of highest weight $\widehat{\mu}$ relatively to $(\widehat{B},\widehat{T})$.
Let $\rho$ denote the identity representation $GL_{n,\mathbb{C}}\rightarrow GL_{n,\mathbb{C}}$,
its weights are $(\chi_{i}){}_{1\leq1\leq n}$. The representation
$\textrm{det}\otimes\rho^{\vee}$ of $GL_{n,\mathbb{C}}$ is irreducible
and its highest weight is $\chi_{1}+...+\chi_{n-1}=\textrm{det}-\chi_{n}$.
Thus, we can define $r$ as follows: 
\[
\begin{array}{ccccc}
r & : & GL_{n,\mathbb{C}}\times\mathbb{G}_{m} & \longrightarrow & GL_{n,\mathbb{C}}\\
 &  & (A,\lambda) & \mapsto & \lambda\textrm{det}(A)\ ^{t}A^{-1}
\end{array}.
\]

\begin{defn}
\label{poldeHeckedef}The Hecke polynomial associated to $(G,X)$
is:
\begin{equation}
H_{p}(t)=\textrm{det}\left(t-p^{n-1}r(g(\sigma\cdot g)\right).\label{H}
\end{equation}
The coefficients of $H_{p}$ are functions on $\widehat{G}$ invariant
under twisted conjugation $c_{x}:g\mapsto xg(\sigma\cdot x^{-1})$,
for $x\in\widehat{G}$.
\end{defn}

\subsection{Hecke algebra}
\begin{defn}
For any $\mathbb{Q}$-algebra $R$, the Hecke algebra $\mathcal{H}_{R}(G(\mathbb{Q}_{p})//K_{p})$
is the set of $K_{p}$-biinvariant, compactly supported functions
$G(\mathbb{Q}_{p})\rightarrow R$. Multiplication is defined by convolution:

\[
(f\star g)(y)=\int_{G(\mathbb{Q}_{p})}f(x)g(x^{-1}y)dx
\]
where the Haar measure on $G(\mathbb{Q}_{p})$ is normalized by $|K_{p}|=1$.
\end{defn}
We recall some facts about the Satake isomorphism. We identify $\mathbb{Q}\left[X_{*}(A)\right]$
and $\mathcal{H}_{\mathbb{Q}}(T(\mathbb{Q}_{p})//T_{c})$ by $\lambda\mapsto1_{\lambda(p)T_{c}}$
for $\lambda\in X_{*}(A)$. In \cite{W1} (1.7,1.8), the twisted Satake
homomorphism $\dot{S}_{T}^{G}$ is defined by the composition

\[
\mathcal{H}_{\mathbb{Q}}(G(\mathbb{Q}_{p})//K_{p})\rightarrow\mathcal{H}_{\mathbb{Q}}(B(\mathbb{Q}_{p})//L)\rightarrow\mathcal{H}_{\mathbb{Q}}(T(\mathbb{Q}_{p})//T_{c})
\]
where the first arrow is restriction of functions and the second is
the quotient by the unipotent radical of $B$. It induces an isomorphism
between $\mathcal{H}_{\mathbb{Q}}(G(\mathbb{Q}_{p})//K_{p})$ and
the subalgebra of $\mathcal{H}_{\mathbb{Q}}(T(\mathbb{Q}_{p})//T_{c})^{\Omega(T),\bullet}$
(the Weyl group acts by the ``dot action'', see \textit{ibid}, 1.8).
Denote by $S_{T}^{G}:\mathcal{H}_{\mathbb{C}}(G(\mathbb{Q}_{p})//K_{p})\rightarrow\mathcal{H}_{\mathbb{C}}(T(\mathbb{Q}_{p})//T_{c})$
the usual Satake isomorphism. Then $S_{T}^{G}=\alpha\circ\dot{S}_{T}^{G}$
where $\alpha:\mathbb{C}\left[X_{*}(A)\right]\rightarrow\mathbb{C}\left[X_{*}(A)\right]$
is defined by $\nu\mapsto p^{-2\left\langle \rho,\nu\right\rangle }\nu$.

\subsection{Hecke polynomial}

The coefficients of $H_{p}$ are polynomial functions on $\widehat{G}$
invariant under twisted conjugation. This is the same as polynomial
functions on $\widehat{T}$ invariant under $\Omega(T)$ and twisted
conjugation, or polynomial functions on $\widehat{A}$ invariant under
$\Omega(T)$. By the untwisted Satake isomorphism, they correspond
to elements in $\mathcal{H}_{\mathbb{Q}}(G(\mathbb{Q}_{p})//K_{p})$.
\begin{lem}
\label{pdansH}The function $\widehat{G}\rightarrow\mathbb{C}$ given
by \textup{$(A,x)\mapsto\textrm{det}(A)x^{2}$} is invariant under
twisted conjugation. It corresponds to the element $1_{pK_{p}}$ in
$\mathcal{H}_{\mathbb{C}}(G(\mathbb{Q}_{p})//K_{p})$.\end{lem}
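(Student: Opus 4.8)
The plan is to prove the two assertions of the lemma in turn. For the invariance under twisted conjugation, I would substitute directly: writing a generic element as $g=(A,\lambda)$ and the twisting parameter as $x=(B,\mu)$ in $\widehat{G}=GL_{n,\mathbb{C}}\times\mathbb{G}_{m,\mathbb{C}}$, and using the explicit formula for the $\sigma$-action, $(C,\nu)\mapsto(J'\,{}^{t}C^{-1}J',\det(C)\nu)$, one computes
\[
c_{x}(g)=x\,g\,(\sigma\cdot x^{-1})=\bigl(BA\,J'\,{}^{t}B\,J',\ \lambda\det(B)^{-1}\bigr).
\]
Applying the function $(A,x)\mapsto\det(A)x^{2}$ to the right-hand side yields $\det(B)^{2}\det(A)\det(J')^{2}\cdot\lambda^{2}\det(B)^{-2}=\det(A)\lambda^{2}\det(J')^{2}$, and since $J'=((-1)^{i-1}\delta_{i,n+1-j})_{i,j}$ is a diagonal sign matrix times the antidiagonal permutation matrix, $\det(J')=\pm1$, hence $\det(J')^{2}=1$ and the value is unchanged. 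So the function descends to a polynomial function on $\widehat{A}$ that is $\Omega(T)$-invariant, the latter being clear since $\Omega(T)\subset\mathfrak{S}_{n}$ merely permutes the diagonal entries of $A$ and fixes the $\mathbb{G}_{m}$-coordinate.

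For the identification with $1_{pK_{p}}$, the key point is that $z:=p\cdot\mathrm{Id}_{V}$ lies in the centre of $G(\mathbb{Q}_{p})$, with multiplier $p^{2}$. Under the identification $G_{E}\simeq GL_{E}(V)\times\mathbb{G}_{m}$ of (\ref{dep}) and the Witt basis $\mathcal{B}_{W}$, the element $z$ becomes $(pI_{n},p^{2})$, that is, $z=\lambda(p)$ for the central cocharacter $\lambda=\mu_{1}+\dots+\mu_{n}+2\mu_{0}\in X_{*}(A)$. Since $z$ is central, $K_{p}zK_{p}=zK_{p}$, so $1_{pK_{p}}=1_{K_{p}zK_{p}}$ is indeed an element of the Hecke algebra, and its twisted Satake transform is short to compute: restricting $1_{zK_{p}}$ to $B(\mathbb{Q}_{p})$ gives $1_{z(K_{p}\cap B(\mathbb{Q}_{p}))}=1_{zL}$, and passing to the quotient by the unipotent radical of $B$ sends this to $1_{zT_{c}}$, each nonempty fibre being a translate of $N(\mathbb{Q}_{p})\cap K_{p}$, of volume $1$. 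Thus $\dot{S}_{T}^{G}(1_{pK_{p}})=1_{\lambda(p)T_{c}}$, which corresponds to $\lambda$ under the identification $\mathbb{Q}[X_{*}(A)]\simeq\mathcal{H}_{\mathbb{Q}}(T(\mathbb{Q}_{p})//T_{c})$; and because $\lambda$ is central, $\langle\rho,\lambda\rangle=0$, so the twist $\alpha\colon\nu\mapsto p^{-2\langle\rho,\nu\rangle}\nu$ is trivial on it and $S_{T}^{G}(1_{pK_{p}})=\lambda$ as well.

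It remains to match $\lambda$ with the given function on $\widehat{G}$. Under the identification $\widehat{T}=\mathrm{Hom}(X_{*}(T),\mathbb{C}^{\times})$ in which $(\mathrm{diag}(x_{1},\dots,x_{n}),x_{0})$ corresponds to $\mu_{i}\mapsto x_{i}$, the element $\lambda=\mu_{1}+\dots+\mu_{n}+2\mu_{0}$ becomes the evaluation function $(\mathrm{diag}(x_{1},\dots,x_{n}),x_{0})\mapsto x_{1}\cdots x_{n}\,x_{0}^{2}$, which is exactly the restriction to $\widehat{T}$ of $(A,x)\mapsto\det(A)x^{2}$. As the latter is a class function on $\widehat{G}$, it is determined by this restriction; combining with the first paragraph, we conclude that under the chain of isomorphisms between twisted-conjugation-invariant polynomial functions on $\widehat{G}$, $\Omega(T)$-invariant polynomial functions on $\widehat{A}$, and $\mathcal{H}_{\mathbb{C}}(G(\mathbb{Q}_{p})//K_{p})$, the function $\det(A)x^{2}$ corresponds to $1_{pK_{p}}$, uniqueness being automatic because the Satake map is an isomorphism.

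The part that needs care is the normalization bookkeeping of the last two paragraphs: one must verify that $p\cdot\mathrm{Id}_{V}$ really corresponds to $\mu_{1}+\dots+\mu_{n}+2\mu_{0}$ (and not, say, to $\mu_{1}+\dots+\mu_{n}+\mu_{0}$) by correctly tracking the multiplier through (\ref{dep}), and one must use the precise normalization $S_{T}^{G}=\alpha\circ\dot{S}_{T}^{G}$ fixed in the text so that the scalar in front comes out to exactly $1$. The saving grace is that $\lambda$ is central, which forces all of these potential corrections to collapse; once the conventions are aligned the computation is genuinely brief.
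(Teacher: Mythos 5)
Your proof is correct and follows essentially the same route as the paper's: identify $p\cdot\mathrm{Id}$ with the central cocharacter $\lambda=\sum_{i>0}\mu_{i}+2\mu_{0}$, note that the Satake transform sends $1_{pK_{p}}$ to $1_{\lambda(p)T_{c}}$, and read off the corresponding character $\sum_{i>0}\chi_{i}+2\chi_{0}$ of $\widehat{T}$ as the function $(A,x_{0})\mapsto\det(A)x_{0}^{2}$. You supply more detail than the paper does (the explicit twisted-conjugation check and the observation that centrality of $\lambda$ makes the twisted and untwisted Satake maps agree on it), but the argument is the same.
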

\begin{proof}
The element $1_{pK_{p}}$ maps to $1_{pT_{c}}$ by the Satake isomorphism.
This element corresponds to $\lambda\in\mathbb{Q}\left[X_{*}(A)\right]$
where $\lambda$ is the cocharacter $u\mapsto u.\textrm{Id}$. Using
the identification (\ref{dep}), we have $ $$\lambda=\sum_{i>0}\mu_{i}+2\mu_{0}$.
The associated character of $\widehat{T}$ is $\sum_{i>0}\chi_{i}+2\chi_{0}$
, which is the function $(A,x_{0})\mapsto\textrm{det}(A)x_{0}^{2}$.
\end{proof}
Let $g=(A,x_{0})\in\widehat{T}$, with $A=\textrm{diag}(x_{1},...,x_{n})$.
Then $r(g(\sigma\cdot g))=\textrm{det}(A)x_{0}^{2}\textrm{diag}(\frac{x_{n}}{x_{1}},...,\frac{x_{1}}{x_{n}})$,
and:

\begin{align*}
H_{p}(t) & =\textrm{det}\left(t-p^{n-1}r(g(\sigma\cdot g))\right)\\
 & =\prod_{i=1}^{n}\left(t-p^{n-1}\textrm{det}(A)x_{0}^{2}\frac{x_{n+1-i}}{x_{i}}\right)\\
 & =R(t)\times(t-p^{n-1}\textrm{det}(A)x_{0}^{2})
\end{align*}
where $R(t)={\displaystyle \prod_{i\neq k}}\left(t-p^{n-1}\textrm{det}(A)x^{2}\frac{x_{n+1-i}}{x_{i}}\right)$.
The polynomial $R$ is invariant under $\Omega(T)$ and twisted conjugation.
We deduce the following result.
\begin{thm}
\label{facteurH}The Hecke polynomial $H_{p}$ in $\mathcal{H}(G(\mathbb{Q}_{p})//K_{p})$
factors into a product
\[
H_{p}(t)=R(t).(t-p^{n-1}1_{pK_{p}})
\]
where $R(t)\in\mathcal{H}(G(\mathbb{Q}_{p})//K_{p})[t]$.
\end{thm}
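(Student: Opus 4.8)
The plan is to do the computation on the dual torus $\widehat{T}$ and then transport the outcome through the untwisted Satake isomorphism, using the dictionary recalled above: twisted-conjugation-invariant polynomial functions on $\widehat{G}$ correspond to $\Omega(T)$-invariant (and still twisted-conjugation-invariant) polynomial functions on $\widehat{T}$, hence to elements of $\mathcal{H}_{\mathbb{Q}}(G(\mathbb{Q}_{p})//K_{p})$. First I evaluate the matrix in (\ref{H}) at a diagonal $g=(A,x_{0})\in\widehat{T}$ with $A=\textrm{diag}(x_{1},\dots,x_{n})$. Combining $r(A,\lambda)=\lambda\,\textrm{det}(A)\,{}^{t}A^{-1}$ with the $\sigma$-action $(A,\lambda)\mapsto(J'\,{}^{t}A^{-1}\,J',\textrm{det}(A)\lambda)$ on $\widehat{G}$ gives $g(\sigma\cdot g)=(A\,J'\,{}^{t}A^{-1}\,J',\,\textrm{det}(A)x_{0}^{2})$. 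Since $J'=((-1)^{i-1}\delta_{i,n+1-j})_{i,j}$ is antidiagonal with $\pm1$ entries and $n$ is odd (so the accumulated sign is $(-1)^{n-1}=1$), the matrix $A\,J'\,{}^{t}A^{-1}\,J'$ equals $\textrm{diag}(x_{1}/x_{n},\dots,x_{n}/x_{1})$, which has determinant $1$; hence $r(g(\sigma\cdot g))=\textrm{det}(A)x_{0}^{2}\,\textrm{diag}(x_{n}/x_{1},\dots,x_{1}/x_{n})$, and as functions on $\widehat{T}$
\[
H_{p}(t)=\prod_{i=1}^{n}\left(t-p^{n-1}\textrm{det}(A)x_{0}^{2}\,\frac{x_{n+1-i}}{x_{i}}\right).
\]

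The parity of $n$ now enters decisively. Since $n$ is odd, the involution $i\mapsto n+1-i$ of $\{1,\dots,n\}$ has the unique fixed point $k=\frac{n+1}{2}$, and the factor indexed by $i=k$ is $t-p^{n-1}\textrm{det}(A)x_{0}^{2}$ because $x_{n+1-k}/x_{k}=1$. By Lemma \ref{pdansH} the function $(A,x_{0})\mapsto\textrm{det}(A)x_{0}^{2}$ is exactly the one the Satake isomorphism attaches to $1_{pK_{p}}$, so this factor is $t-p^{n-1}1_{pK_{p}}$. Pulling it out and setting $R(t)=\prod_{i\neq k}\left(t-p^{n-1}\textrm{det}(A)x_{0}^{2}\,x_{n+1-i}/x_{i}\right)$ yields the factorization $H_{p}(t)=R(t)\cdot(t-p^{n-1}\textrm{det}(A)x_{0}^{2})$ as polynomials over the coordinate ring of $\widehat{T}$.

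It remains to check that $R(t)$ lies in $\mathcal{H}(G(\mathbb{Q}_{p})//K_{p})[t]$, i.e. that its coefficients are invariant under $\Omega(T)$ and twisted conjugation. The coefficients of $H_{p}(t)$ are invariant by construction (Definition \ref{poldeHeckedef}), and the monic polynomial $t-p^{n-1}\textrm{det}(A)x_{0}^{2}$ has invariant coefficients by Lemma \ref{pdansH}. Since monic polynomial division never leaves the coefficient ring and the factorization already holds over the coordinate ring of $\widehat{T}$, dividing $H_{p}(t)$ by this monic polynomial inside the polynomial ring over the ring of invariant functions produces the quotient $R(t)$ with invariant coefficients (and zero remainder). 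One can also see the $\Omega(T)$-invariance directly: $\Omega(T)$ permutes the $n$ scalars $\textrm{det}(A)x_{0}^{2}\,x_{n+1-i}/x_{i}$ while fixing the one for $i=k$, hence permutes the remaining $n-1$ factors of $R$ among themselves. Transporting back through Satake gives $R(t)\in\mathcal{H}(G(\mathbb{Q}_{p})//K_{p})[t]$ together with the stated identity $H_{p}(t)=R(t)\cdot(t-p^{n-1}1_{pK_{p}})$.

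Every step is elementary and there is no serious obstacle; the one point deserving attention is that the argument hinges on the existence of the fixed index $k=\frac{n+1}{2}$, available precisely because $n$ is odd, which is what lets a single linear factor equal to $t-p^{n-1}1_{pK_{p}}$ split off from the Hecke polynomial (the same parity also makes $(-1)^{n-1}=1$, giving the clean diagonal form of $A\,J'\,{}^{t}A^{-1}\,J'$). For $n$ even no such factor appears, which is why the even case treated in \cite{BW1} proceeds by a different route and why the factorization above is special to odd $n$.
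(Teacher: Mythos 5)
Your proof is correct and follows essentially the same route as the paper: evaluate $r(g(\sigma\cdot g))$ at a diagonal element of $\widehat{T}$, observe that for odd $n$ the index $k=\frac{n+1}{2}$ is the unique fixed point of $i\mapsto n+1-i$ and therefore contributes the factor $t-p^{n-1}\det(A)x_{0}^{2}$, identify this with $t-p^{n-1}1_{pK_{p}}$ via Lemma \ref{pdansH}, and check that the complementary product $R(t)$ has $\Omega(T)$- and twisted-conjugation-invariant coefficients. You supply slightly more explicit detail (the computation of $AJ'\,{}^{t}A^{-1}J'$ and the monic-division argument for invariance), but the underlying argument is the same.
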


\section{The Shimura variety}

\subsection{The moduli problem}

Let $K^{p}\subset G(\mathbb{A}_{f}^{p})$ be a compact open subgroup
and denote by $Sh_{K}$ the moduli space associated to the data $(E,\sigma,V,\psi,\mathcal{O}_{E,(p)},\Lambda,h,\mu)$
according to Kottwitz \cite{Kot}. We assume $K^{p}$ to be sufficiently
small such that this moduli problem is representable by a smooth,
quasi-projective scheme over $\mathcal{O}_{E_{p}}$. For any noetherian
$\mathcal{O}_{E_{p}}$-scheme $S$, it classifies the following data,
up to prime-to-$p$-isogeny:
\begin{enumerate}
\item An abelian scheme $A$ of dimension $n$ over $S$.
\item A $\mathbb{Q}$-homogeneous polarization $\overline{\lambda}=\mathbb{Q}\lambda$
for some prime-to-$p$ polarization $\lambda$.
\item An action $\iota:\mathcal{O}_{E}\otimes\mathbb{Z}_{(p)}\hookrightarrow\textrm{End}(A)\otimes\mathbb{Z}_{(p)}$
compatible with $\overline{\lambda}$.
\item A $\pi_{1}(S,s)$-stable $K^{p}$-orbit of compatible isomorphisms
$\overline{\eta}:V(\mathbb{A}_{f}^{p})\xrightarrow{\sim}H_{1}(A_{s},\mathbb{A}_{f}^{p})$
for one geometric point $s$ in each connected component of $S$.
\end{enumerate}
Further, $(A,\iota,\overline{\lambda},\overline{\eta})$ satisfies
the determinant condition: the characteristic polynomial of $e\in\mathcal{O}_{E}\otimes\mathbb{Z}_{(p)}$
acting on $\textrm{Lie}(A)$ is $(T-e)^{n-1}(T-\overline{e})\in\mathcal{O}_{S}[T]$.

We now give an equivalent moduli problem. Write $\widehat{\mathbb{Z}}^{(p)}={\displaystyle \prod_{\ell\neq p}}\mathbb{Z}_{\ell}\subset\mathbb{A}_{f}^{p}$
and for any $\mathcal{O}_{E}\left[\frac{1}{p}\right]$-lattice $L\subset V$,
write $\widehat{L}^{(p)}=L\otimes\widehat{\mathbb{Z}}^{(p)}\subset V(\mathbb{A}_{f}^{p})$.
We can find a $\mathcal{O}_{E}\left[\frac{1}{p}\right]$-lattice $L\subset V$
satisfying the conditions:

\begin{equation}
\begin{array}{c}
K^{p}\subset\left\{ g\in G(\mathbb{A}_{f}^{p}),\: g(\widehat{L}^{(p)})=\widehat{L}^{(p)}\right\} \\
\varphi(L,L)\subset\mathbb{Z}\left[\frac{1}{p}\right]
\end{array}\label{condL}
\end{equation}
(see notations for the definition of $\varphi$). The determinant
of $\varphi:L\times L\rightarrow\mathbb{Z}\left[\frac{1}{p}\right]$
is a square in $\mathbb{Z}\left[\frac{1}{p}\right]$ and is well defined
up to an invertible element. Let $d\in\mathbb{Z}$ coprime to $p$
such that $\textrm{det}(\varphi)=d^{2}$. We consider the moduli problem
$\mathfrak{F}$ classifying the following data, up to isomorphism:
For any noetherian $\mathcal{O}_{E_{p}}$-scheme $S$,
\begin{enumerate}
\item An abelian scheme $A$ of dimension $n$ over $S$.
\item A polarization $\lambda:A\rightarrow A^{\vee}$ of degree $d^{2}$.
\item An action $\iota:\mathcal{O}_{E}\hookrightarrow End(A)$ compatible
with $\lambda$.
\item A $\pi_{1}(S,s)$-stable $K^{p}$-orbit of compatible isomorphisms$\overline{\eta}:\widehat{L}^{(p)}\xrightarrow{\sim}T_{f}^{p}(A_{s})=\prod_{\ell\neq p}T_{\ell}(A_{s})$
such that the following diagram commutes
\end{enumerate}
\[
\begin{array}{ccccc}
\widehat{L}^{(p)} & \times & \widehat{L}^{(p)} & \longrightarrow & \widehat{\mathbb{Z}}^{(p)}\\
\eta^{p}\downarrow &  & \eta^{p}\downarrow &  & \downarrow\theta\\
T_{f}^{p}(A_{s}) & \times & T_{f}^{p}(A_{s}) & \longrightarrow & \widehat{\mathbb{Z}}^{(p)}(1)
\end{array}
\]
where $\theta$ is some $\widehat{\mathbb{Z}}^{(p)}$-linear isomorphism.
Further, $(A,\iota,\lambda,\overline{\eta})$ satisfies the determinant
condition.

Proposition \ref{shkisom} below is well known, we will skip the proof.
\begin{prop}
\textup{\label{shkisom}The natural map $\mathcal{\mathfrak{F}}\rightarrow Sh_{K}$
is an isomorphism of functors.}\end{prop}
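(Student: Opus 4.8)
The plan is to write down the obvious forgetful morphism $\mathfrak{F}\to Sh_K$ and build a quasi-inverse by rigidification; the point is that the prime-to-$p$-isogeny equivalence relation defining $Sh_K$ becomes trivial once one fixes a $K^p$- and $\mathcal{O}_E$-stable lattice. The forgetful morphism is immediate: from $(A,\lambda,\iota,\overline{\eta})\in\mathfrak{F}(S)$ one produces the homogeneous polarization $\overline{\lambda}=\mathbb{Q}\lambda$, extends $\iota$ to the $\mathbb{Z}_{(p)}$-algebra map $\mathcal{O}_E\otimes\mathbb{Z}_{(p)}\hookrightarrow\mathrm{End}(A)\otimes\mathbb{Z}_{(p)}$, and extends the $K^p$-orbit $\overline{\eta}$ along the inclusion $\widehat{L}^{(p)}\subset\widehat{L}^{(p)}\otimes\mathbb{Q}=V(\mathbb{A}_f^p)$. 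This is well defined because $K^p$ stabilizes $\widehat{L}^{(p)}$ by (\ref{condL}), and it visibly respects the determinant condition, the pairing compatibilities, and is functorial in $S$, so it defines the map of Proposition \ref{shkisom}.

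For the quasi-inverse I would proceed as follows. Given $(A,\overline{\lambda},\iota,\overline{\eta})\in Sh_K(S)$, pick a geometric point $s$ in each connected component of $S$. The orbit $\overline{\eta}$ carries $\widehat{L}^{(p)}$ onto a $\pi_1(S,s)$-stable $\widehat{\mathbb{Z}}^{(p)}$-lattice $M_s\subset H_1(A_s,\mathbb{A}_f^p)$, and this lattice does not depend on the chosen representative of the orbit since $K^p$ preserves $\widehat{L}^{(p)}$. Because $M_s$ is commensurable with $T_f^p(A_s)$ and $\pi_1$-stable, it cuts out a prime-to-$p$-isogeny $\pi\colon A\to A^{\natural}$ over $S$, unique up to unique isomorphism, with $T_f^p(A^{\natural}_s)=M_s$; this uses only that a $\pi_1$-stable finite subgroup of order prime to $p$ spreads to a finite \'etale subgroup scheme over $S$, together with the standard manipulation with lattices. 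I would then check that $A^{\natural}$ inherits a rigidified datum: since $L$ is an $\mathcal{O}_E[1/p]$-lattice, $M_s\cong\widehat{L}^{(p)}$ is stable under $\mathcal{O}_E\otimes\widehat{\mathbb{Z}}^{(p)}$, so the $\mathcal{O}_E\otimes\mathbb{Z}_{(p)}$-action on $A$ transports to an honest action $\iota^{\natural}\colon\mathcal{O}_E\hookrightarrow\mathrm{End}(A^{\natural})$; the class $\overline{\lambda}$ contains a unique prime-to-$p$ polarization $\lambda^{\natural}$ whose Riemann form on $T_f^p(A^{\natural}_s)$ matches, up to $\widehat{\mathbb{Z}}^{(p),\times}$, the restriction of $\varphi$ to $\widehat{L}^{(p)}$, and since $\det\varphi=d^2$ relative to $L$ this $\lambda^{\natural}$ has degree $d^2$; and $\overline{\eta}$ restricts to a $K^p$-orbit $\overline{\eta}^{\natural}\colon\widehat{L}^{(p)}\xrightarrow{\sim}T_f^p(A^{\natural}_s)$ making the pairing square commute. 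The determinant condition survives because $\pi$ has \'etale kernel (its degree is prime to $p$), hence induces an isomorphism on Lie algebras. All the choices being canonical — independence of the base points is forced by $\pi_1$-equivariance — this assignment is functorial in $S$.

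Finally I would check that the two composites are the identity up to canonical isomorphism. The composite $\mathfrak{F}\to Sh_K\to\mathfrak{F}$ gives back the same object, since the lattice cut out is already $T_f^p(A_s)$, so $\pi$ is an isomorphism, the degree-$d^2$ polarization in $\mathbb{Q}\lambda$ is $\lambda$ itself by uniqueness, and $\iota^{\natural}=\iota$, $\overline{\eta}^{\natural}=\overline{\eta}$. The composite $Sh_K\to\mathfrak{F}\to Sh_K$ is naturally isomorphic to the identity via $\pi\colon A\to A^{\natural}$, which is an isomorphism in the prime-to-$p$-isogeny category defining $Sh_K$ by construction. Hence $\mathfrak{F}\to Sh_K$ is an isomorphism of functors. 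The hard part will really be the existence, uniqueness up to unique isomorphism, and functoriality in $S$ of the rigidified abelian scheme $A^{\natural}$ — that is, descending the prime-to-$p$-isogeny determined by $M_s$ from the geometric fibres to $S$ and verifying its independence of the chosen base points — together with the claim that each homogeneous polarization class contains exactly one prime-to-$p$ polarization of degree $d^2$; both are routine in the PEL setting but are where all the content sits.
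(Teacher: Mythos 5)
The paper deliberately omits this proof, stating only that the result is well known, so there is no argument of the paper's to compare against; your proposal is the standard rigidification argument (essentially Kottwitz's Lemma 7.2 comparing the isogeny-class and lattice moduli problems) and is correct. The delicate points are exactly the ones you flag yourself: spreading the $\pi_1$-stable lattice $M_s$ out to a prime-to-$p$ isogeny $A\to A^{\natural}$ over all of $S$, and normalizing the polarization within its $\mathbb{Q}$-homogeneous class — using the compatibility of $\overline{\eta}$ with the pairings at all $\ell\neq p$ and primality to $p$ at $p$ — to obtain the unique representative of degree $d^2$.
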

\begin{rem}
\label{isomtriv}For $K^{p}$ small enough, every automorphism of
a tuple $(A,\iota,\lambda,\overline{\eta})$ is trivial.
\end{rem}

\subsection{Dieudonné modules}

We write $\overline{Sh_{K}}=Sh_{K}\times\kappa(\mathcal{O}_{E_{p}})$
for the special fibre of $Sh_{K}$. It is equidimensional of dimension
$n-1$. In order to study $\overline{Sh_{K}}$, we use (covariant)
Dieudonné theory.

\subsubsection{Some definitions}

Let $k'$ be an algebraically closed field containing $\kappa(\mathcal{O}_{E_{p}})$
and let $W=W(k')$ be the ring of Witt vectors and $W_{\mathbb{Q}}=W\otimes\mathbb{Q}$.
The choice of $k$ induces an embedding $\varrho:E_{p}\hookrightarrow W_{\mathbb{Q}}$.
A \textsl{Dieudonné module} is a free $W$-module $M$ of finite rank
together with a $\sigma$-linear endomorphism $F$ and a $\sigma^{-1}$-endomorphism
$V$ of $M$ such that $FV=VF=p$.

A \textsl{Dieudonné space} over $k$ is a finite-dimensional $k$-vector
space together with a $\textrm{Frob}_{k}$-linear endomorphism and
a $\textrm{Frob}_{k}^{-1}$-endomorphism $V$ of $M$ such that $FV=VF=0$.
If $M$ is a Dieudonné module, then $\overline{M}=\frac{M}{pM}$ is
a Dieudonné space which satisfies
\begin{equation}
\textrm{Im}(F)=\textrm{Ker}(V)\;\textrm{ and }\;\textrm{Im}(V)=\textrm{Ker}(F).\label{egalitesbt1}
\end{equation}

An $\mathcal{O}_{E_{p}}$-\textsl{Dieudonné module} \textsl{over}
$k$ is a Dieudonné module endowed with a $W$-linear $\mathcal{O}_{E_{p}}$-action
commuting with $F,V$. We define similarly the notion of $\mathcal{O}_{E_{p}}$-\textsl{Dieudonné
space.} The $\mathcal{O}_{E_{p}}$-action induces a decomposition
$M=M_{e}\oplus M_{\overline{e}}$ where $M_{e}$ (resp. $M_{\overline{e}}$)
is the submodule where $\mathcal{O}_{E_{p}}$ acts via $\varrho$
(resp. $\overline{\varrho}$). We define the \textsl{signature} of
an $\mathcal{O}_{E_{p}}$-Dieudonné module to be the pair

\[
\left(\textrm{dim}_{k}(\frac{M_{e}}{VM_{\overline{e}}}),\textrm{dim}_{k}(\frac{M_{\overline{e}}}{VM_{e}})\right).
\]

If $A$ is an abelian variety over $k$ and $M=\mathbb{D}(A)$, then
$\textrm{Lie}(A)=\frac{M}{VM}$. We define in a similar fashion the
signature of an $\mathcal{O}_{E_{p}}$-Dieudonné space. The signatures
of $M$ and $\overline{M}$ are the same.

A \textsl{quasi-unitary Dieudonné module} \textsl{over} $k$ is an
$\mathcal{O}_{E_{p}}$-Dieudonné module endowed with a non-degenerate
alternating pairing $\left\langle ,\right\rangle :M\times M\rightarrow W_{\mathbb{Q}}$
such that $\forall e\in\mathcal{O}_{E_{p}}$, $\forall x,y\in M$,
$\left\langle ex,y\right\rangle =\left\langle x,\overline{e}y\right\rangle $
and $\left\langle Fx,y\right\rangle =\sigma\left\langle x,Vy\right\rangle $.
We call $M$ \textsl{unitary} if $\left\langle ,\right\rangle :M\times M\rightarrow W$
is perfect. We define similarly the notion of \textsl{unitary Dieudonné
space.} 

A \textsl{unitary isocrystal over} $k$ is a finite-dimensional $W_{\mathbb{Q}}$-vector
space $N$ together with endomorphisms $F,V$, an $\mathcal{O}_{E_{p}}$-action,
a $W_{\mathbb{Q}}$-bilinear pairing $\left\langle ,\right\rangle :N\times N\rightarrow W_{\mathbb{Q}}$
subject to the same hypotheses as above. If $M$ is a quasi-unitary
Dieudonné module, then $M\otimes W_{\mathbb{Q}}$ is a unitary isocrystal.
If $\lambda\in\mathbb{Q}$, we denote by $N_{\lambda}$ ``the''
simple isocrystal of slope $\lambda$. We say that an isocrystal is
supersingular if all its slopes are $\frac{1}{2}$.

\subsubsection{Dieudonné theory}

Dieudonné theory gives an equivalence of categories between unitary
Dieudonné modules over $k$ and $p$-divisible groups over $k$ (with
polarization and $\mathcal{O}_{E_{p}}$-action). For a definition
of these objects, see \cite{BW1} section 2. Similarly, there is an
equivalence of categories between unitary Dieudonné spaces over $k$
which satisfy (\ref{egalitesbt1}) and truncated Barsotti-Tate groups
of level 1 (or $BT_{1}$) over $k$ (with polarization and $\mathcal{O}_{E_{p}}$-action).
See \cite{grothBT} definition 3.2 for these statements.

\subsubsection{Examples}
\begin{enumerate}
\item \noindent Let $SS$ be the following Dieudonné module: It has a $W$-basis
$(g,h)$ such that $SS_{e}=Wg$, $SS_{\overline{e}}=Wh$, the endomorphisms
$F,V$ are defined by $F(g)=h=-V(g)$, and the pairing is given by
$\left\langle g,h\right\rangle =1$. This is a unitary Dieudonné module
of signature $(1,0)$ and slope $\frac{1}{2}$.
\item Let $d\geq1$ be an integer. Define a unitary Dieudonné module $\mathbb{B}(d)$
as follows: It has a $W$-basis $(e_{i},f_{i})$, $i\in\{1,...,d\}$
with $e_{i}\in\mathbb{B}(d)_{e}$ and $f_{i}\in\mathbb{B}(d)_{\overline{e}}$.
The endomorphisms $F,V$ are given by
\begin{align*}
 & F(f_{1})=(-1)^{d}e_{n}\\
 & F(e_{i})=f_{i-1}\textrm{ for }i=2,...,d\\
 & V(f_{d})=e_{1}\\
 & V(e_{i})=f_{i+1}\textrm{ for }i=1,...,d-1.
\end{align*}
The alternating form is defined by $\left\langle e_{i},f_{j}\right\rangle =(-1)^{i-1}\delta_{i,j}$.
This is a unitary Dieudonné module of signature $(d-1,1)$. If $d$
is odd, every slope of $\mathbb{B}(d)\otimes W_{\mathbb{Q}}$ is $\frac{1}{2}$.
If $d$ is even, its slopes are $\frac{1}{2}\pm\frac{1}{d}$ (cf.
\cite{BW1} lemma 3.3).
\end{enumerate}

\subsubsection{Classification}

We classify below isocrystals and Dieudonné spaces which come into
play in our situation. We refer to \cite{BW1}, section 3.1 and 3.6
respectively for the proofs.
\begin{prop}
\label{isoc}Let $M$ be a unitary Dieudonné module of signature $(n-1,1)$
and $N$ its isocrystal. Then
\[
N\simeq N(r)\times(N_{\frac{1}{2}})^{n-2r}
\]
where $r$ is an integer $0\leq r\leq\frac{n-1}{2}$ and:
\[
N(r)=\begin{cases}
0 & \textrm{if }r=0\\
N_{\frac{1}{2}-\frac{1}{2r}}\oplus N_{\frac{1}{2}+\frac{1}{2r}} & \textrm{if }r>0\textrm{ is even}\\
N_{\frac{1}{2}-\frac{1}{2r}}^{2}\oplus N_{\frac{1}{2}+\frac{1}{2r}}^{2} & \textrm{if }r\textrm{ is odd}.
\end{cases}
\]

\end{prop}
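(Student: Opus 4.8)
The plan is to reduce the statement to a computation with isocrystals and to invoke Mazur's ``Newton lies above Hodge'' inequality, applied after passing from $N$ to its $e$-part. Write $N=M\otimes W_{\mathbb{Q}}$ and use the $\mathcal{O}_{E_p}$-eigenspace decomposition $N=N_e\oplus N_{\bar e}$, each summand of dimension $n$ over $W_{\mathbb{Q}}$. Since $p$ is inert, $\sigma$ restricts to the nontrivial automorphism of $\varrho(E_p)$, so $F$ and $V$ both interchange $N_e$ and $N_{\bar e}$; hence $\Phi:=F^2$ is a $\sigma^2$-linear automorphism of $N_e$. Using a suitable even power of $F$ one checks that the isoclinic component of $N$ of slope $\lambda$ contributes to $(N_e,\Phi,\sigma^2)$ an isoclinic piece of slope $2\lambda$, so the Newton slopes of $(N_e,\Phi)$ are exactly the $2\lambda$, with appropriate multiplicities. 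The polarization makes $F$ and $V$ adjoint up to $\sigma$ for the pairing identifying $N_e$ with $N_{\bar e}^{\vee}$, so the slopes of $N$ are symmetric about $\tfrac12$ and those of $(N_e,\Phi)$ about $1$. Finally, the isoclinic component $N_\mu^{m_\mu}$ of a slope $\mu=c/d$ (in lowest terms) is $\mathcal{O}_{E_p}$- and $F$-stable, and $F$ interchanges its $e$- and $\bar e$-parts, which therefore each have dimension $\tfrac12 m_\mu d$; hence $m_\mu d$ is even, so $m_\mu$ is even whenever $d$ is odd.

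Next I would compute the Hodge polygon of the $F$-crystal $(M_e,\Phi,\sigma^2)$. From $FV=VF=p$, from $pM\subset FM\subset M$ and $pM\subset VM\subset M$, and from the signature hypothesis --- which says $\dim_k M_e/VM_{\bar e}=n-1$ and $\dim_k M_{\bar e}/VM_e=1$ --- one reads off that $F\colon M_e\to M_{\bar e}$ has a single elementary divisor equal to $p$ and $F\colon M_{\bar e}\to M_e$ has exactly $n-1$ of them equal to $p$. Writing $F^2|_{M_e}=(F|_{M_{\bar e}})\circ(F|_{M_e})$ and using the elementary-divisor calculus for a composite (Pieri's rule), one finds that $M_e/F^2M_e$ has type $(1^n)$ or $(2,1^{n-2})$; equivalently, the Hodge polygon of $(M_e,\Phi)$ has slopes $(1,\dots,1)$, or $(2,1,\dots,1,0)$.

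Then I apply Mazur's inequality to $(M_e,\Phi,\sigma^2)$. If the Hodge polygon is the segment of slope $1$, then the Newton polygon of $(N_e,\Phi)$, being convex with the same endpoints and lying above this segment, must coincide with it; so $N$ is isoclinic of slope $\tfrac12$, which is the case $r=0$. Otherwise, writing the Newton slopes of $(N_e,\Phi)$ as $\beta_1\le\dots\le\beta_n$, Mazur's inequality gives $\sum_{i\le j}\beta_i\ge\max(0,j-1)$; since $(\beta_i-1)$ is nondecreasing with total sum $0$, this forces
\[
\sum_{\beta_i<1}(1-\beta_i)\ \le\ 1 .
\]
Substituting $\beta=2\mu$, the left side equals $\sum_{\mu<1/2}m_\mu d_\mu\,(\tfrac12-\mu)$, summed over the Newton slopes $\mu<\tfrac12$ of $N$, where $\mu=c/d_\mu$ in lowest terms and $m_\mu$ is the multiplicity of the simple isocrystal $N_\mu$. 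A short check --- using $m_\mu\ge1$, $m_\mu$ even when $d_\mu$ is odd, and the elementary inequalities $\tfrac12-\mu\ge 1/d_\mu$ for $d_\mu$ even, $\tfrac12-\mu\ge 1/(2d_\mu)$ for $d_\mu$ odd --- shows every term $m_\mu d_\mu(\tfrac12-\mu)$ is at least $1$. Hence there is at most one slope $\mu<\tfrac12$, and when it occurs every inequality in the check is an equality, so $\mu=\tfrac12-\tfrac1{2r}$ where either $d_\mu=2r$ is even with $m_\mu=1$, or $d_\mu=r$ is odd with $m_\mu=2$. By symmetry the only slope $>\tfrac12$ is $1-\mu$, with the same multiplicity, and all remaining slopes equal $\tfrac12$. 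Comparing dimensions yields $N\simeq N(r)\times(N_{1/2})^{n-2r}$ with $0\le r\le\frac{n-1}{2}$.

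I expect the crux --- and the main obstacle --- to be the passage from Mazur's inequality to the sharp statement that at most one non-supersingular slope pair occurs and that it is pinned to the shape $\tfrac12\pm\tfrac1{2r}$; this is exactly where the extreme smallness of the signature $(n-1,1)$ enters, through the elementary-divisor bookkeeping of the second step. An alternative, more structural route is to identify $[N]$ with an element of $B(G_{\mathbb{Q}_p},\mu)$ for the minuscule cocharacter $\mu$ attached to $(G,X)$ and to enumerate that set via the results of Kottwitz and Rapoport--Richartz; that, or the Dieudonn\'e-theoretic argument of \cite{BW1}, would also complete the proof.
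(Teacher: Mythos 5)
Your argument is correct. Note first that the paper does not actually prove Proposition \ref{isoc}: it defers to \cite{BW1}, \S 3.1, so what you have written is a self-contained substitute rather than a parallel to a proof in the text. Your route --- pass to the $\sigma^2$-$F$-crystal $(M_e,F^2)$, compute its Hodge polygon from the signature (slopes $(1,\dots,1)$ or $(0,1,\dots,1,2)$), apply Newton-above-Hodge, and combine with the two structural constraints (symmetry of slopes about $\tfrac12$ from the polarization, and evenness of $m_\mu d_\mu$ because $F$ swaps the $e$- and $\bar e$-parts of each isoclinic piece) --- is essentially the mechanism underlying the classification in \cite{BW1}, and every step checks out: the elementary-divisor bookkeeping is right ($V|_{M_{\bar e}}$ has cokernel $k^{n-1}$, hence $F|_{M_e}$ has cokernel $k$, and dually for the other component), the reduction of Mazur's inequality to $\sum_{\beta_i<1}(1-\beta_i)\le 1$ is correct, and the term-by-term lower bound $m_\mu d_\mu(\tfrac12-\mu)\ge 1$ does pin down the unique non-supersingular slope pair. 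Two small points you should make explicit rather than leave implicit: (i) you are invoking Mazur's inequality for a $\sigma^2$-linear crystal, not a $\sigma$-linear one; this is harmless (the standard proof via exterior powers, or the group-theoretic version of Rapoport--Richartz, applies verbatim over the algebraically closed field $\overline{\mathbb{F}}$), but it should be said. (ii) In the equality case with $d_\mu$ even you get $\mu=\tfrac12-\tfrac1{d}$ with $d=2r$, and the requirement that $\tfrac{r-1}{2r}$ be in lowest terms with denominator $2r$ forces $\gcd(r-1,2)=1$, i.e.\ $r$ even; this is what makes your two equality branches line up exactly with the parity split between the two forms of $N(r)$ in the statement. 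With those two remarks added, the proof is complete; your closing worry about the ``passage from Mazur to the sharp statement'' is unfounded, since you have in fact carried that passage out.
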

\vspace{0bp}

\begin{prop}
\label{BT1}Let $\overline{M}$ be a unitary Dieudonné space of signature
$(n-1,1)$. There is an integer $1\leq r\leq n$ such that $\overline{M}$
is isomorphic to
\[
\overline{\mathbb{B}(r)}\oplus\overline{SS}^{n-r}.
\]

\end{prop}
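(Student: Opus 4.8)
The plan is to reduce Proposition \ref{BT1} to a problem in semilinear algebra over $k$ and then induct on $n$, splitting off copies of $\overline{SS}$ as orthogonal direct summands until what remains is forced to be a single $\overline{\mathbb{B}(r)}$.

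\emph{Reduction.} Since $p$ is inert, $\mathcal{O}_{E_p}/p=\mathbb{F}_{p^2}$ embeds into $k$ via $\varrho$ and $\overline\varrho$, so the $\mathcal{O}_{E_p}$-action splits $\overline M=M_e\oplus M_{\bar e}$ with $\dim_k M_e=\dim_k M_{\bar e}=n$. Both $F$ and $V$ interchange the two factors; the pairing vanishes on each factor (there $e$ and $\bar e$ act by distinct scalars) and induces a perfect duality $M_e\times M_{\bar e}\to k$; and the relation $\langle Fx,y\rangle=\sigma\langle x,Vy\rangle$ makes $F|_{M_e}$ and $V|_{M_e}$ transposes of each other, up to a Frobenius twist, for that duality, and likewise for $F|_{M_{\bar e}}$ and $V|_{M_{\bar e}}$. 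Combining this with (\ref{egalitesbt1}) and the definition of the signature, one finds that $F$ and $V$ have rank $n-1$ on $M_e$ and rank $1$ on $M_{\bar e}$, with $\textrm{Im}(F|_{M_{\bar e}})=\textrm{Ker}(V|_{M_e})$, $\textrm{Im}(V|_{M_{\bar e}})=\textrm{Ker}(F|_{M_e})$, and symmetrically on $M_{\bar e}$.

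\emph{Splitting off $\overline{SS}$.} Call $g\in M_e$ \emph{good} if $Fg=-Vg\neq 0$ and $\langle g,Fg\rangle\neq 0$. If a good $g$ exists, put $h:=Fg$; from $h\in\textrm{Im}(F|_{M_e})=\textrm{Ker}(V|_{M_{\bar e}})$ and $h\in\textrm{Im}(V|_{M_e})=\textrm{Ker}(F|_{M_{\bar e}})$ one gets $Fh=Vh=0$, so $U:=kg\oplus kh$ is stable under $F,V$ and under $\mathcal{O}_{E_p}$ and, after normalizing $\langle g,h\rangle=1$, is isomorphic to $\overline{SS}$. The pairing is perfect on $U$; since $F$ and $V$ are mutually adjoint, $U^\perp$ is again a unitary Dieudonn\'e subspace, $\overline M=U\perp U^\perp$, and $U^\perp$ inherits all the structure with signature $(n-2,1)$ and the analogue of (\ref{egalitesbt1}). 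By induction on $n$, $U^\perp\cong\overline{\mathbb{B}(r)}\oplus\overline{SS}^{n-1-r}$, hence $\overline M\cong\overline{\mathbb{B}(r)}\oplus\overline{SS}^{n-r}$.

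\emph{The exceptional case.} It remains to treat the situation in which no good vector exists, that is $\langle g,Fg\rangle=0$ for every $g\in M_e$ with $Fg=-Vg\neq 0$; the claim is then that $\overline M\cong\overline{\mathbb{B}(n)}$, so $r=n$. Here the rank and nilpotence conditions, together with this vanishing, should leave essentially one possibility for the $F,V$-module structure: fixing a generator $e_1$ of the line $\textrm{Ker}(F|_{M_e})$ and propagating it by $V$, $F$ and the duality produces a basis $(e_1,\dots,e_n,f_1,\dots,f_n)$ realizing exactly the incidences and the pairing defining $\mathbb{B}(n)$. Establishing this rigidity, namely that the absence of an orthogonal $\overline{SS}$-summand pins $\overline M$ down to the single long string $\overline{\mathbb{B}(n)}$, is the main obstacle. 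A more structural alternative is to read Proposition \ref{BT1} as a classification of indecomposable $\mathbb{F}_{p^2}$-linear truncated Barsotti--Tate groups of level $1$ of signature $(n-1,1)$ and to deduce it from the Kraft--Oort classification of $BT_1$-groups over $k$: every such object is an orthogonal sum of indecomposable ``strings'' and of dual pairs of strings, and forcing the second signature entry to equal $1$ permits at most one non-$\overline{SS}$ string and excludes \'etale or multiplicative summands (which would alter the first entry), leaving exactly $\overline{\mathbb{B}(r)}\oplus\overline{SS}^{n-r}$ with $1\le r\le n$.
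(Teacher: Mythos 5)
The paper offers no argument for this proposition: it is quoted from \cite{BW1} (\S 3.6), so there is no internal proof to compare against, and your attempt has to stand on its own. The parts you do carry out are essentially correct. The rank computations for $F$ and $V$ on $M_e$ and $M_{\bar e}$ follow from the signature together with (\ref{egalitesbt1}) --- which you rightly invoke, and which must indeed be imposed even though the statement of the proposition leaves it implicit --- and the splitting-off of a ``good'' vector works: $U=kg\oplus kh$ is a nondegenerate sub-object, $U^{\perp}$ is $F,V$-stable by adjointness, and signatures and (\ref{egalitesbt1}) pass to the orthogonal complement. One small verification you skip but which does go through: $\langle g,Fg\rangle$ automatically lies in $\mathbb{F}_p$ (combine alternation with $\langle Fg,g\rangle=\sigma\langle g,Vg\rangle=-\sigma\langle g,Fg\rangle$), and since the rescalings $g\mapsto cg$ compatible with $Fg=-Vg$ are exactly $c\in\mathbb{F}_{p^2}$ and act on $\langle g,Fg\rangle$ through the norm $\mathbb{F}_{p^2}^{\times}\twoheadrightarrow\mathbb{F}_p^{\times}$, the normalization $\langle g,h\rangle=1$ is indeed achievable.

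The proof is nonetheless incomplete, and you say so yourself: all of the actual content of the classification sits in the ``exceptional case,'' where no good vector exists and one must show $\overline{M}\cong\overline{\mathbb{B}(n)}$. Your sketch (propagate a generator of $\mathrm{Ker}(F|_{M_e})$ by $F$, $V$ and the duality) is the right picture, but it is precisely the rigidity statement that has to be proved: one must show the resulting string neither terminates early nor closes up, and the hypothesis ``$\langle g,Fg\rangle=0$ whenever $Fg=-Vg\neq0$'' has to be used in an essential way to exclude a shorter string plus a degenerate complement; none of this is done. The fallback through Kraft--Oort is also only gestured at: that classification is for $BT_1$'s with no extra structure, and what is needed here is the classification with polarization \emph{and} $\mathcal{O}_{E_p}$-action (Moonen's Weyl-coset classification, or the direct computation in \cite{BW1}), whose interaction with the string decomposition is exactly the bookkeeping you omit. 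Your parenthetical that \'etale and multiplicative summands are excluded is moreover wrong as stated: the \'etale--multiplicative dual pair \emph{is} $\overline{\mathbb{B}(2)}$ and occurs as the case $r=2$; what must be excluded is its coexistence with another summand of nonzero second signature entry. As written, then, the argument proves the proposition only modulo the uniqueness of the long string, which is the heart of the matter.
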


\subsection{Stratifications}

\subsubsection{Ekedahl-Oort stratification}

Applying proposition \ref{BT1} to $M$ gives us an integer $1\leq r\leq n$.
This defines a stratification
\[
\overline{Sh_{K}}=\bigsqcup_{r=1}^{n}\mathcal{M}_{r}
\]
where $\mathcal{M}_{r}$ is the locus where $\overline{M}$ is isomorphic
to $\overline{\mathbb{B}(r)}\oplus\overline{SS}^{n-r}$. A point in
$\mathcal{M}_{r}$ and its Dieudonné module are said of type $r$.
All the $\mathcal{M}_{r}$ are equidimensional and the dimensions
are given by: 
\begin{eqnarray*}
\textrm{dim}(\mathcal{M}_{2i}) & = & n-i\\
\textrm{dim}(\mathcal{M}_{2i+1}) & = & i
\end{eqnarray*}
(cf. \textsl{ibid}, 5.4).

\subsubsection{Newton polygon stratification}

The Newton polygon stratification is given by isomorphism classes
of unitary isocrystals. It happens to be coarser than the Ekedahl-Oort
stratification. It reads:
\[
\overline{Sh_{K}}=\mathcal{M}_{2}\sqcup\mathcal{M}_{4}\sqcup...\cup\mathcal{M}_{n-1}\sqcup\bigsqcup_{r\textrm{ odd}}\mathcal{M}_{r}.
\]
The stratum $\mathcal{M}_{2r}$ is also the locus where the unitary
isocrystal is isomorphic to $N(r)\times(N_{\frac{1}{2}})^{n-2r}$.
The supersingular locus is $\overline{Sh_{K}}^{ss}={\displaystyle \bigsqcup_{r\textrm{ odd}}}\mathcal{M}_{r}$
and has dimension $\frac{n-1}{2}$ (\textsl{ibid}, proposition 5.5).
Finally, we state a result on the geometric structure of $\overline{Sh_{K}}^{ss}$.
For the proof, see \cite{V2}, theorem 5.2.
\begin{thm}
For $K^{p}$ sufficiently small, the supersingular locus $\overline{Sh_{K}}^{ss}$
is equidimensional of dimension $\frac{n-1}{2}$ and locally of complete
intersection. Its smooth locus is the open Ekedahl-Oort stratum $\mathcal{M}_{n}$.
\end{thm}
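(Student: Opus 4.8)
The plan is to establish all of the assertions by $p$-adic uniformization, reducing them to an explicit study of a Rapoport--Zink space and of its Bruhat--Tits and Deligne--Lusztig combinatorics; this is the route of \cite{V1,V2}. Fix a supersingular point $x_{0}\in\overline{Sh_{K}}$ and let $\mathbb{X}$ be its $p$-divisible group with its $\mathcal{O}_{E_{p}}$-action and polarization; since the isocrystal is supersingular, every supersingular point is isogenous to $x_{0}$. Let $\mathcal{N}$ be the Rapoport--Zink formal scheme over $\mathrm{Spf}\,W$ parametrizing deformations of $\mathbb{X}$ equipped with a quasi-isogeny to $\mathbb{X}$. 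The $p$-adic uniformization theorem of Rapoport--Zink gives an isomorphism of formal schemes between $I(\mathbb{Q})\backslash\bigl(\mathcal{N}\times G(\mathbb{A}_{f}^{p})/K^{p}\bigr)$, with $I$ an inner form of $G$, and the formal completion of $\overline{Sh_{K}}$ along $\overline{Sh_{K}}^{ss}$. Equidimensionality, dimension, being locally a complete intersection, and the identification of the regular locus are local properties that are unaffected by passing to the formal completion along the closed subscheme, and the quotient by $I(\mathbb{Q})$ together with the prime-to-$p$ level structure only reglues copies of the same local models; so it suffices to prove the corresponding statements for the reduced scheme $\mathcal{N}_{\mathrm{red}}$ underlying $\mathcal{N}$, keeping track of the Ekedahl--Oort strata on both sides.

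Next comes the lattice model. The covariant Dieudonn\'e module of $\mathbb{X}$ is a supersingular unitary isocrystal $N$ over $W_{\mathbb{Q}}$ of $E_{p}$-dimension $n$; the $\overline{\mathbb{F}}$-points of $\mathcal{N}_{\mathrm{red}}$ correspond to $\mathcal{O}_{E_{p}}\otimes W$-lattices $M\subset N$ with $pM\subset VM\subset M$, self-dual up to a scalar for the polarization form, and of signature $(n-1,1)$ on the two $\mathcal{O}_{E_{p}}$-isotypic parts --- exactly the local structure classified by Proposition \ref{BT1}. To such an $M$ one attaches the $F$- and $V$-stable lattice $\Lambda(M)=\sum_{i\geq0}F^{i}M$, which is self-dual up to a power of $p$; the lattices arising in this way are the vertices of the Bruhat--Tits building of the unitary group $\mathbb{J}=\mathrm{Aut}(N)$, graded by an invariant $t(\Lambda)$ which, since $n$ is odd, is an odd integer with $1\leq t(\Lambda)\leq n$. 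This produces the locally closed stratification $\mathcal{N}_{\mathrm{red}}=\bigsqcup_{\Lambda}\mathcal{N}_{\Lambda}$, $\mathcal{N}_{\Lambda}=\{M:\Lambda(M)=\Lambda\}$, with projective closures $\overline{\mathcal{N}_{\Lambda}}=\bigsqcup_{\Lambda'\subseteq\Lambda}\mathcal{N}_{\Lambda'}$. Reducing the lattice conditions modulo $p$ rewrites the description of $\mathcal{N}_{\Lambda}$, and hence of $\overline{\mathcal{N}_{\Lambda}}$, in terms of subspaces of the $t(\Lambda)$-dimensional Hermitian $\mathbb{F}_{p^{2}}$-space attached to $\Lambda$ lying in general position relative to Frobenius; this identifies $\overline{\mathcal{N}_{\Lambda}}$ with a Deligne--Lusztig variety for the finite unitary group $GU_{t(\Lambda)}(\mathbb{F}_{p^{2}}/\mathbb{F}_{p})$ attached to a Coxeter element, which is smooth, projective and irreducible of dimension $(t(\Lambda)-1)/2$. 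One checks that the induced stratification of $\overline{\mathcal{N}_{\Lambda}}$ by orbits matches the Ekedahl--Oort stratification: $\mathcal{M}_{r}$ corresponds to the union, over $\Lambda$ with $t(\Lambda)=r$, of the open Coxeter cells.

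The geometric conclusions are now read off. First $\mathcal{N}_{\mathrm{red}}=\bigcup_{t(\Lambda)=n}\overline{\mathcal{N}_{\Lambda}}$, so $\mathcal{M}_{n}$ is dense and the irreducible components of $\overline{Sh_{K}}^{ss}$ are the images of the $\overline{\mathcal{N}_{\Lambda}}$ with $\Lambda$ of maximal type $n$, each smooth projective of dimension $(n-1)/2$; hence $\overline{Sh_{K}}^{ss}$ is equidimensional of dimension $(n-1)/2$. Since $\mathcal{M}_{n}$ is covered by the smooth open Coxeter cells, it lies in the smooth locus. Conversely, a point $x\in\mathcal{M}_{r}$ with $r<n$ odd has $\Lambda(M_{x})$ of type $r$, and a combinatorial argument in the building of $\mathbb{J}$ shows that a vertex of type $r<n$ is dominated by at least two distinct vertices of type $n$; thus at least two irreducible components of $\overline{Sh_{K}}^{ss}$ pass through $x$, so $x$ is singular. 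Therefore the smooth locus is exactly $\mathcal{M}_{n}$.

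It remains to prove that $\mathcal{N}_{\mathrm{red}}$ is everywhere a complete intersection, and this is the main obstacle: one must show that at a point $x\in\mathcal{M}_{r}$ with $r<n$ odd, where several of the smooth components $\overline{\mathcal{N}_{\Lambda}}$ meet --- along the smaller smooth subvarieties $\overline{\mathcal{N}_{\Lambda'}}$, $\Lambda'\subseteq\Lambda$ --- the local ring $\mathcal{O}_{\mathcal{N}_{\mathrm{red}},x}$ is nonetheless of the form $R/(f_{1},\dots,f_{(n-1)/2})$ for a regular local ring $R$ and a regular sequence $f_{1},\dots,f_{(n-1)/2}$. This requires precise control of the closure order in the Bruhat--Tits stratification --- equivalently, of the simplicial structure of the link of $\Lambda(M_{x})$ in the building of $\mathbb{J}$ --- together with the way the Deligne--Lusztig subvarieties $\overline{\mathcal{N}_{\Lambda'}}$ sit inside the ambient parahoric flag varieties, so that the defining ideal is generated by the expected number of elements. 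When $n=3$ this is automatic, the supersingular locus then being a reduced divisor in the smooth surface $\overline{Sh_{K}}$; for larger $n$ it is a genuine local computation and constitutes the technical heart of \cite{V2}. The bookkeeping in the earlier steps, by contrast, is essentially formal once the lattice model of the second step is in hand.
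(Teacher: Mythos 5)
The paper does not actually prove this theorem: it is quoted verbatim from \cite{V2} (Theorem 5.2), and the surrounding text of Section 2.3.2 and Section 4 merely records the output of \cite{V1,V2} (the decomposition $\mathcal{N}'_{red,i}=\bigcup_{L\in\mathcal{L}_i(n)}\mathcal{N}'_L$, the smoothness and dimension of the $\mathcal{N}'_L$, the bijection with superspecial lattices, the uniformization theorem of \cite{RZ}). So the relevant comparison is with the cited source, and your outline does follow that route faithfully: uniformization to reduce to $\mathcal{N}'_{red}$, the lattice/Bruhat--Tits stratification, the identification of closed strata with (generalized) Deligne--Lusztig varieties of dimension $(t(\Lambda)-1)/2$, density of the type-$n$ stratum, and singularity at points of smaller type because several components meet there. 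Two bookkeeping points to fix: the vertex lattices live in the $\mathbb{Q}_{p^2}$-hermitian space $\mathbf{N}'_0=\{x\in N'_e:\tau x=x\}$, $\tau=p^{-1}F^2$ (as in (\ref{Li})), not as $\sum_{i\ge 0}F^iM$ inside $N$; and the reduction to $\mathcal{N}'_{red}$ should be phrased via the isomorphism of reduced schemes $I(\mathbb{Q})\backslash(\mathcal{N}'_{red}\times G(\mathbb{A}_f^p)/K^p)\simeq\overline{Sh_K}^{ss}\times\overline{\mathbb{F}}$ together with the fact that for $K^p$ small the maps $\mathcal{N}'_{red}\to\Gamma_j\backslash\mathcal{N}'_{red}$ are local isomorphisms --- this is exactly where the hypothesis ``$K^p$ sufficiently small'' enters, and it deserves to be said; invoking formal completions is beside the point since local completeness of intersection is intrinsic to the reduced closed subscheme.

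The genuine gap is the one you flag yourself: the local complete intersection property. Everything else in the statement (equidimensionality, the dimension, smoothness along $\mathcal{M}_n$, singularity off $\mathcal{M}_n$) does follow formally from the stratification once the structure of the closed strata is granted, but lci at a point of type $r<n$, where the components $\mathcal{N}'_L$ through that point must be shown to be cut out in a smooth ambient space by exactly $\frac{n-1}{2}$ equations forming a regular sequence, is the only nontrivial assertion and you do not prove it --- you describe what a proof would need and defer to ``the technical heart of \cite{V2}''. The same remark applies, less seriously, to the combinatorial claim that a vertex of type $r<n$ is dominated by at least two vertices of type $n$: it is true and elementary, but it is asserted rather than argued. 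As written, then, the proposal is an accurate survey of the proof in \cite{V1,V2} rather than a proof; since the paper itself only cites that result, this is acceptable as exposition, but it should not be presented as a self-contained argument.
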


\section{Moduli space of $p$-isogenies}

\subsection{The moduli problem}

We define a moduli space classifying $p$-isogenies. Let $S$ be an
$\mathcal{O}_{E_{p}}$-scheme and $\underline{A}_{i}=(A_{i},\iota_{i},\overline{\lambda}_{i},\overline{\eta}_{i})$,
$i\in\left\{ 1,2\right\} $ two tuples corresponding to $S$-valued
points of $Sh_{K}$. A $p$-\textsl{isogeny} $f:\underline{A}_{1}\rightarrow\underline{A}_{2}$
is an $\mathcal{O}_{E,(p)}$-linear isogeny such that $p^{c}\lambda_{1}=f^{\vee}\circ\lambda_{0}\circ f$
for some $c\geq0$, which we call the multiplicator. This implies
$\textrm{deg}(f)=p^{cn}$.

Let $p\textrm{-}\mathscr{I}sog$ be the $\mathcal{O}_{E_{p}}$-scheme
classifying $p$-isogenies. Two $p$-isogenies $f:\underline{A}_{1}\rightarrow\underline{A}_{2}$
and $f':\underline{A}'_{1}\rightarrow\underline{A}'_{2}$ are identified
if there are prime-to-$p$-isogenies $h_{i}:\underline{A}_{i}\rightarrow\underline{A}'_{i}$
for $i\in\left\{ 1,2\right\} $ such that $f'\circ h_{1}=h_{2}\circ f$.
The $p$-isogenies of multiplicator $c$ form an open and closed subscheme
$p\textrm{-}\mathscr{I}sog^{(c)}\subset p\textrm{-}\mathscr{I}sog$.

Let $S$ be an $\mathcal{O}_{E_{p}}$-scheme. Write $\mathfrak{I}(S)$
for the moduli problem classifying $p$-isogenies between points of
$\mathfrak{F}(S)$ (see 2.1 for the definition of $\mathfrak{F}$),
up to isomorphisms. The natural map $\mathfrak{I}\rightarrow p\textrm{-}\mathscr{I}sog$
is an isomorphism of functors.

Let $s,t:p\textrm{-}\mathscr{I}sog\rightarrow Sh_{K}$ be the maps
sending an isogeny to its source and target, respectively. The restrictions
$s,t:p\textrm{-}\mathscr{I}sog^{(c)}\rightarrow Sh_{K}$ are proper
for $c\geq0$ (\cite{BM}, 4.2.1).

The ``multiplication by $p$'' map sends $\underline{A}$ to $p:\underline{A}\rightarrow\left\langle p\right\rangle \underline{A}$
(for $q\in p^{\mathbb{Z}}$,$\left\langle q\right\rangle $ multiplies
the level structure by $q$). This defines a section of $s$. As $s$
is separated, its image is a reduced closed subscheme $\left\langle p\right\rangle \subset p\textrm{-}\mathscr{I}sog^{(2)}$.

The ordinary locus $p\textrm{-}\mathscr{I}sog^{ord}\times\kappa(\mathcal{O}_{E_{p}})$
is defined as the inverse image of $\overline{Sh_{K}}^{ord}$ by$s$
(or $t$). We define similarly the supersingular locus $p\textrm{-}\mathscr{I}sog^{ss}\times\kappa(\mathcal{O}_{E_{p}})$.
On the special fibre, there is a Frobenius section of $s$. It sends
a tuple $\underline{A}$ to the Frobenius isogeny $F_{\underline{A}}:\underline{A}\rightarrow\underline{A}^{(p^{2})}$.
The level structure on $\overline{\eta}^{(p^{2})}$ on $A^{(p^{2})}$
is compatible with $\overline{\eta}$ through $F_{A}$. The image
of $s$ is a reduced closed subscheme $F\subset p\textrm{-}\mathscr{I}sog\times\kappa(\mathcal{O}_{E_{p}})$,
which is a union of irreducible components of $p\textrm{-}\mathscr{I}sog\times\kappa(\mathcal{O}_{E_{p}})$,
because $s$ is finite and flat over the ordinary locus (\cite{BM},
4.2.2) and $\overline{Sh_{K}}^{ord}$ is dense in $\overline{Sh_{K}}$.
This follows also from the fact that $p\textrm{-}\mathscr{I}sog\times\kappa(\mathcal{O}_{E_{p}})$
is equidimensional of dimension $n-1$, as we will show later. By
duality, we also have the Verschiebung map $V_{A}:A^{(p^{2})}\rightarrow A$.
Notice that $V_{A}\circ F_{A}=p^{2}$ so taking into account level
structures, the Verschiebung is a map $V_{\underline{A}}:\left\langle p^{-2}\right\rangle \underline{A}^{(p^{2})}\rightarrow\underline{A}$.

\subsection{The $\mathbb{Q}$-algebra $\mathbb{Q}\left[p\textrm{-}\mathscr{I}sog\times L\right]$}

Composition of isogenies defines a morphism

\[
c:p\textrm{-}\mathscr{I}sog\times_{t,s}p\textrm{-}\mathscr{I}sog\longrightarrow p\textrm{-}\mathscr{I}sog
\]
which is proper (cf. \cite{BM}, 4.2.1). Let $L$ be a field and $\mathcal{O}_{E_{p}}\rightarrow L$
a homomorphism. Let $Z_{\mathbb{Q}}\left(p\textrm{-}\mathscr{I}sog\times L\right)$
denote the group of algebraic cycles of $p\textrm{-}\mathscr{I}sog\times L$,
with $\mathbb{Q}$-coefficients. For cycles $Y_{1},Y_{2}$, we define:
\[
Y_{1}\cdot Y_{2}=c_{*}\left(Y_{1}\times_{t,s}Y_{2}\right).
\]
Extending this product bilinearly, we get a ring structure on $Z_{\mathbb{Q}}\left(p\textrm{-}\mathscr{I}sog\times L\right)$,
with identity $p\textrm{-}\mathscr{I}sog^{(0)}\times L$. Let $\mathbb{Q}\left[p\textrm{-}\mathscr{I}sog\times L\right]$
be the $\mathbb{Q}$-subalgebra generated by the irreducible components.

Define the $\mathbb{Q}$-algebra $\mathbb{Q}\left[p\textrm{-}\mathscr{I}sog^{ord}\times\kappa(\mathcal{O}_{E_{p}})\right]$
in a similar fashion as hereabove. We may view $F$ as an element
of $\mathbb{Q}\left[p\textrm{-}\mathscr{I}sog^{ord}\times\kappa(\mathcal{O}_{E_{p}})\right]$
or $\mathbb{Q}\left[p\textrm{-}\mathscr{I}sog\times\kappa(\mathcal{O}_{E_{p}})\right]$.

\subsection{A commutative diagram}

Let $\mathcal{H}_{0}(G(\mathbb{Q}_{p})//K_{p})\subset\mathcal{H}(G(\mathbb{Q}_{p})//K_{p})$
be the subalgebra of $\mathbb{Q}$-valued functions that have support
contained in $G(\mathbb{Q}_{p})\cap\textrm{End}(\Lambda)$. There
is a $\mathbb{Q}$-algebra homomorphism

\[
h:\mathcal{H}_{0}(G(\mathbb{Q}_{p})//K_{p})\longrightarrow\mathbb{Q}\left[p\textrm{-}\mathscr{I}sog\times E_{p}\right]
\]
which we will explain briefly. Let $L$ be a field containing $E_{p}$
and $f:\underline{A}_{1}\rightarrow\underline{A}_{2}$ corresponding
to an $L$-valued point in $p\textrm{-}\mathscr{I}sog\times E_{p}$.
Choose isomorphisms $\alpha_{i}:\Lambda\simeq T_{p}(A_{i})$ ,$i\in\left\{ 0,1\right\} $.
Then $\alpha_{2}^{-1}\circ V_{p}f\circ\alpha_{1}:\Lambda\otimes\mathbb{Q}_{p}\rightarrow\Lambda\otimes\mathbb{Q}_{p}$
is an element of $G(\mathbb{Q}_{p})\cap\textrm{End}(\Lambda$). Its
class $\tau(f)$ in $K_{p}\backslash G(\mathbb{Q}_{p})/K_{p}$ is
independent of the choices involved. The function $\tau$ is constant
on irreducible components of $p\textrm{-}\mathscr{I}sog\times E_{p}$.
Then $h$ maps $1_{K_{p}gK_{p}}$ to the sum of irreducible components
$C\subset p\textrm{-}\mathscr{I}sog\times E_{p}$ such that $\tau(C)=K_{p}gK_{p}$.
The specialization map
\[
\sigma:\mathbb{Q}[p\textrm{-}\mathscr{I}sog\times E_{p}]\longrightarrow\mathbb{Q}[p\textrm{-}\mathscr{I}sog\times\kappa(\mathcal{O}_{E_{p}})]
\]
is defined as follows: Let $C$ an irreducible composant of $p\textrm{-}\mathscr{I}sog\times E_{p}$
and $\mathcal{C}$ the scheme-theoretic image of $C$ by the open
immersion $p\textrm{-}\mathscr{I}sog\times E_{p}\hookrightarrow p\textrm{-}\mathscr{I}sog$.
Then $\sigma(C)=\left[\mathcal{C}\times\kappa(\mathcal{O}_{E_{p}})\right]$.

The cocharacter $\mu$ is defined over $E$. Let $M\subset G_{\mathbb{Q}_{p}}$
be the centralizer of $\mu\overline{\mu}$. We denote again by $\mathcal{H}_{0}(M(\mathbb{Q}_{p})//(K_{p}\cap M(\mathbb{Q}_{p})))$
the functions with support in $\Lambda$. We have a commutative diagram
of $\mathbb{Q}$-algebra homomorphisms

\begin{equation}
\xymatrix{\mathcal{H}_{0}(G(\mathbb{Q}_{p})//K_{p})\ar[dd]_{\dot{S}_{M}^{G}}\ar[r]^{h} & \mathbb{Q}\left[p\textrm{-}\mathscr{I}sog\times E_{p}\right]\ar[d]^{\sigma}\\
 & \mathbb{Q}\left[p\textrm{-}\mathscr{I}sog\times\kappa(\mathcal{O}_{E_{p}})\right]\ar@<1ex>[d]^{\textrm{ord}}\\
\mathcal{H}_{0}(M(\mathbb{Q}_{p})//(K_{p}\cap M(\mathbb{Q}_{p})))\ar[r]_{h_{0}} & \mathbb{Q}\left[p\textrm{-}\mathscr{I}sog^{ord}\times\kappa(\mathcal{O}_{E_{p}})\right]\ar@<1ex>[u]^{\textrm{cl}}
}
\label{diagcommutatif}
\end{equation}
The morphism $\dot{S}$ is the twisted Satake homomorphism (see \cite{W1}
$\mathsection$1). The map $\textrm{ord}$ is defined by intersecting
with the ordinary locus, and the map $\textrm{cl}$ is defined by
taking the closure of a cycle of $p\textrm{-}\mathscr{I}sog^{ord}\times\kappa(\mathcal{O}_{E_{p}})$.
The big square is commutative. In this context, the ``congruence
relation'' means the following conjecture:
\begin{conjecture*}
\label{rdcconjpisog-1}Consider the polynomial $H_{\mathfrak{p}}$
inside $\mathbb{Q}\left[p\textrm{-}\mathscr{I}sog\times\kappa(\mathcal{O}_{E_{p}})\right]$
via the morphism $\sigma\circ h^{0}$. The element $F$ lies in the
center of this ring and the following relation holds:\textup{
\[
H_{\mathfrak{p}}(F)=0.
\]
}
\end{conjecture*}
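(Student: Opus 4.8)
The plan is to compute the two cycles $C\cdot F$ and $C\cdot\langle p\rangle$ separately; the theorem is then equivalent to $C\cdot F=p^{n-1}\,(C\cdot\langle p\rangle)$.

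The cycle $C\cdot\langle p\rangle$ is harmless. Since $\langle p\rangle$ is the image of the section $\underline A\mapsto([p]_{\underline A}:\underline A\to\langle p\rangle\underline A)$ of $s$, the fibre product $C\times_{t,s}\langle p\rangle$ is canonically isomorphic to $C$, and composition sends a $p$-isogeny $\phi:\underline A_{1}\to\underline A_{2}$ in $C$ to $p\cdot\phi:\underline A_{1}\to\langle p\rangle\underline A_{2}$. This is postcomposition with the fixed isogeny $[p]$, hence a locally closed immersion; so $C\cdot\langle p\rangle$ is a single irreducible component appearing with multiplicity one, again supersingular because $\langle p\rangle$ preserves the supersingular locus and the component has dimension $n-1$.

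For $C\cdot F$: since $s$ restricts to an isomorphism on the Frobenius section $F$, one again has $C\times_{t,s}F\cong C$, and composition becomes $c_{F}:\phi\mapsto F_{t(\phi)}\circ\phi$, where $F_{\underline A}:\underline A\to\underline A^{(p^{2})}$ is the $p^{2}$-Frobenius isogeny. By naturality of Frobenius $F_{t(\phi)}\circ\phi=\phi^{(p^{2})}\circ F_{s(\phi)}$; writing $\theta_{\underline A}:=p^{-1}F_{\underline A}$ for the quasi-isogeny $\underline A\to\underline A^{(p^{2})}$ of degree one that the relation $V_{\underline A}\circ F_{\underline A}=[p^{2}]_{\underline A}$ makes available, and using that $[p]$ is central, one obtains $c_{F}(\phi)=p\cdot g(\phi)$ with $g(\phi):=\phi^{(p^{2})}\circ\theta_{s(\phi)}$. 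The supersingular hypothesis enters here: over a dense open of $C$ the self-map $g$ of $C$, a priori only rational, is defined and dominant, i.e. $\theta$ turns the universal $p$-isogeny into an honest $p$-isogeny still lying on $C$. This is the one non-formal input, and it rests on the description of $\overline{Sh_{K}}^{ss}$ and of supersingular $p$-isogenies by lattice chains in the isocrystal $N_{\frac12}^{n}$ from \cite{V1,V2} and \cite{BW1}. (The pointwise picture is deceptive: at a superspecial point $\theta$ is an isomorphism and $F=[p]$ there, but superspecial points form a proper closed locus, and the factor $p^{n-1}$ lives on the complementary directions.) Granting this, $\overline{c_{F}(C)}$ equals the support of $C\cdot\langle p\rangle$ and $\deg(c_{F})=\deg(g)$.

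It remains to see $\deg(g)=p^{n-1}$. Since $s\circ g=s$ and $t\circ g=\Phi\circ t$, where $\Phi$ is the $p^{2}$-power relative Frobenius endomorphism of $\overline{Sh_{K}}$, the self-map $g$ of $C$ lies, under $(t,s)$, over $\Phi\times\mathrm{id}$ on $\overline{Sh_{K}}\times\overline{Sh_{K}}$. On a supersingular component $(t,s)|_{C}$ is generically finite onto a component of $\overline{Sh_{K}}^{ss}\times\overline{Sh_{K}}^{ss}$ (once more read off from the structure theory of the supersingular locus), and $g$ respects it since $g(C)=C$; hence $\deg(g)$ equals the degree of $\Phi\times\mathrm{id}$ on $\overline{Sh_{K}}^{ss}\times\overline{Sh_{K}}^{ss}$, that is $(p^{2})^{\dim\overline{Sh_{K}}^{ss}}=(p^{2})^{(n-1)/2}=p^{n-1}$. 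Therefore $C\cdot F=(c_{F})_{*}[C]=p^{n-1}\,(C\cdot\langle p\rangle)$, as claimed. The crux is the non-formal input above — that $\theta$ keeps the universal $p$-isogeny honest and on $C$ generically — a global statement about the component to which the degenerate superspecial computation is blind, and for which the decomposition of the supersingular locus into Deligne--Lusztig varieties together with the explicit $p$-divisible groups attached to their vertices (from \cite{V1,V2}) is indispensable; the rest is bookkeeping with multiplicators, so that $c_{F}$ and multiplication by $p$ are compared inside the same stratum $p\textrm{-}\mathscr{I}sog^{(c+2)}$.
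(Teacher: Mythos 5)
The central gap is that what you prove (granting every step) is only the supersingular identity $C\cdot F=p^{n-1}\,C\cdot\left\langle p\right\rangle$; this is \emph{not} equivalent to $H_{\mathfrak{p}}(F)=0$, and the reduction you silently assume is a substantial part of the argument. To get from the conjecture to that identity one needs: (i) the factorization $H_{p}(t)=R(t)\cdot(t-p^{n-1}1_{pK_{p}})$ of the Hecke polynomial and the identification of $1_{pK_{p}}$ with the cycle $\left\langle p\right\rangle$ under $\sigma\circ h$ (theorem \ref{facteurH}, lemma \ref{pdansH}); (ii) Moonen's ordinary congruence relation \ref{moonenRDCord} applied to $H'_{p}=\textrm{cl}(\textrm{ord}(H_{p}))$, which reduces the problem to $(H_{p}-H'_{p})(F)=0$; (iii) the dichotomy of proposition \ref{comppisog}, that every $(n-1)$-dimensional component of $p\textrm{-}\mathscr{I}sog\times\kappa(\mathcal{O}_{E_{p}})$ is either contained in the closure of the ordinary locus or entirely supersingular, which is what makes the coefficients of $R-R'$ purely supersingular so that your computation applies; and (iv) the centrality of $F$ (proposition \ref{FCCF}), without which $H_{p}(F)$ is not even well defined. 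None of these appears in your proposal.

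Within the supersingular computation your route is essentially the paper's (identify the supports of $C\cdot F$ and $C\cdot\left\langle p\right\rangle$, then compare $\deg(\mathcal{F}|_{C_{t}})=p^{n-1}$ with $\deg(\left\langle p\right\rangle|_{C_{t}})=1$), but the two things you label ``non-formal input'' are precisely the lemmas that carry the proof and cannot be left as gestures toward \cite{V1,V2}. First, the assertion that $g(\phi)=\phi^{(p^{2})}\circ\theta_{s(\phi)}$ lands back on $C$ hides the identity $\left\langle p^{-1}\right\rangle\mathcal{F}(C_{t})=C_{t}$, i.e.\ proposition \ref{pCFC}, which the paper proves by exhibiting both $p$ and $V$ inside the \emph{unique} top-dimensional component of a fibre $t_{2}^{-1}(x)$ over a point of type $n$ (remark \ref{t2}); note also that $t\circ g$ is $\left\langle p^{-1}\right\rangle\circ\mathcal{F}\circ t$, not $\Phi\circ t$, so even granting \ref{pCFC} you still need to know that a supersingular component is determined by its source and target components. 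That uniqueness is theorem \ref{stimfer} ($(s,t)$ is a closed immersion on the supersingular locus after shrinking $K^{p}$) combined with proposition \ref{compunik} and the pushforward argument of lemma \ref{piF} to descend back to the original level; your proposal addresses none of this. The detour through the quasi-isogeny $\theta=p^{-1}F$ adds nothing and obscures the issue: the paper compares $c_{F}$ and $c_{p}$ directly through the isomorphism $(s,t):C\overset{\sim}{\rightarrow}C_{s}\times C_{t}$, which is where the closed-immersion statement is really used.
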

This relation makes sense since $F$ belongs to the center of $\mathbb{Q}\left[p\textrm{-}\mathscr{I}sog\times\kappa(\mathcal{O}_{E_{p}})\right]$,
as we shall see in section 5. We mention the following theorem, due
to B. Moonen in \cite{BM}.
\begin{thm*}
Consider the polynomial $H_{\mathfrak{p}}$ inside $\mathbb{Q}\left[p\textrm{-}\mathscr{I}sog^{ord}\times\kappa(\mathcal{O}_{E_{p}})\right]$
via the morphism \textup{$\textrm{ord}\circ\sigma\circ h^{0}$}. In
this ring, the following relation holds:\textup{
\[
H_{\mathfrak{p}}(F)=0.
\]
}
\end{thm*}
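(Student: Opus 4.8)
The statement is due to B.~Moonen \cite{BM}; I describe the strategy one follows to prove it. The guiding principle is that over the $\mu$-ordinary locus everything is controlled by the Levi subgroup $M\subset G_{\mathbb{Q}_{p}}$ attached to $\mu$, which reduces the relation to a local form of the Eichler--Shimura identity in the Hecke algebra of $M$. First I would record the geometry: $\overline{Sh_{K}}^{ord}$ is the single stratum $\mathcal{M}_{2}$, over which the universal $p$-divisible group, with its $\mathcal{O}_{E_{p}}$-action and polarisation, carries a canonical slope filtration whose graded pieces are isoclinic (of slopes $0,\frac{1}{2},1$ here) and which is strict under every $p$-isogeny. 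Consequently $s,t\colon p\textrm{-}\mathscr{I}sog^{ord}\times\kappa(\mathcal{O}_{E_{p}})\to\overline{Sh_{K}}^{ord}$ are finite flat, $F$ is a union of irreducible components, and, the Frobenius isogeny being functorial and strict for the filtration, $F$ lies in the centre of $\mathbb{Q}\left[p\textrm{-}\mathscr{I}sog^{ord}\times\kappa(\mathcal{O}_{E_{p}})\right]$, so that the equation $H_{\mathfrak{p}}(F)=0$ is meaningful.

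Next I would use the commutativity of the big square in diagram (\ref{diagcommutatif}): the image of $H_{\mathfrak{p}}$ under $\textrm{ord}\circ\sigma\circ h^{0}$ equals the image under $h_{0}$ of the polynomial whose coefficients are $\dot{S}_{M}^{G}$ applied to those of $H_{\mathfrak{p}}$. One then shows, using Serre--Tate / cascade coordinates on the $\mu$-ordinary locus (a central leaf, so all its geometric points have isomorphic $p$-divisible group), that $F$ itself lies in the image of $h_{0}$, corresponding to the Hecke operator of $M$ attached to the Frobenius of the slope filtration. This assertion -- essentially that the whole correspondence $(p\textrm{-}\mathscr{I}sog^{ord}\times\kappa(\mathcal{O}_{E_{p}}),s,t)$ is ``governed by $M$'' -- is the deep input; it rests on the central-leaf structure of the $\mu$-ordinary locus, on the canonicity of the slope filtration of its $p$-divisible group, and on Moonen's coordinates, and I expect it to be the main obstacle.

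Granting this, one is reduced to a purely group-theoretic statement in $\mathcal{H}_{\mathbb{C}}(M(\mathbb{Q}_{p})//(K_{p}\cap M(\mathbb{Q}_{p})))$: the Frobenius operator of $M$ is annihilated by $\dot{S}_{M}^{G}(H_{\mathfrak{p}})$. In Satake terms this says that its eigenvalue on every unramified representation of $M$ is a root of $H_{\mathfrak{p}}(t)=\prod_{i=1}^{n}\bigl(t-p^{n-1}\det(A)x_{0}^{2}\,x_{n+1-i}/x_{i}\bigr)$ from Section 1, the relevant branch being dictated by the Shimura--Taniyama formula for the slopes of Frobenius on the $\mu$-ordinary $p$-divisible group (an isomorphism on the slope-$0$ piece, multiplication by $p^{2}$ on the slope-$1$ piece, a Frobenius of the slope-$\frac{1}{2}$ piece between them); this is the local Eichler--Shimura relation and is a direct, if somewhat lengthy, computation. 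I note finally that the factor $i=k$ equals $t-p^{n-1}1_{pK_{p}}$ by Lemma \ref{pdansH}, hence maps to $t-p^{n-1}\langle p\rangle$; this is precisely the factor that, in Section 5, will be responsible for the discrepancy along the supersingular locus.
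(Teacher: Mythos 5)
The paper does not prove this theorem: it is stated twice (once in the introduction as Theorem~\ref{moonenRDCord}, once in Section~3) and in both places explicitly attributed to B.~Moonen \cite{BM} with no proof supplied, since the present article only uses it as input. You correctly recognize this attribution and accordingly give a strategy outline rather than a full argument, which is the right move; your sketch (slope filtration on the $\mu$-ordinary locus, strictness of $p$-isogenies for that filtration, finite flatness of $s,t$, reduction via diagram~(\ref{diagcommutatif}) to a Satake--Hecke computation in $\mathcal{H}(M(\mathbb{Q}_{p})//(K_{p}\cap M(\mathbb{Q}_{p})))$, and the identification of the eigenvalue of Frobenius via the slopes $0,\frac{1}{2},1$) does reflect the structure of Moonen's argument, and your closing observation about the factor $t-p^{n-1}1_{pK_{p}}$ correctly anticipates the role it plays in Section~5. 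Since there is no proof in the paper to compare against, the only caveat is that your middle step, that ``$F$ lies in the image of $h_{0}$'' and that the ordinary isogeny space is entirely governed by $M$, is stated rather loosely; this is indeed the technical heart of \cite{BM} (Moonen's cascade/Serre--Tate structure theorem for the $\mu$-ordinary locus and the resulting description of $p\textrm{-}\mathscr{I}sog^{ord}$), and you rightly flag it as the deep input rather than claiming to reprove it.
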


\section{The supersingular locus}

\subsection{The moduli space $\mathcal{N}'$}

Uniformization theory from \cite{RZ} can be used in order to study
the supersingular locus of $\overline{Sh_{K}}$. In \cite{V1,V2},
the authors give the geometric structure of $\overline{Sh_{K}}^{ss}$.
We state below their main results.

Let $K^{p}\subset G(\mathbb{A}_{f}^{p})$ be an open compact subgroup.
We fix a tuple $\underline{A}'=(A',\iota',\overline{\lambda}',\overline{\eta}')$
over $\overline{\mathbb{F}}$. Using the same conventions as in \cite{RZ},
$\overline{\eta}'$ is a $K^{p}$-orbit of isomorphisms: $\overline{\eta}':H_{1}(A',\mathbb{A}_{f}^{p})\rightarrow V(\mathbb{A}_{f}^{p})$.
We assume that $A'$ is supersingular. We denote by $\underline{X}'$
its $p$-divisible group over $\overline{\mathbb{F}}$ and we write
$M'=\mathbb{D}(A')$ and $N'=M'\otimes W_{\mathbb{Q}}$.

The formal scheme $\mathcal{N}'$ over $\overline{\mathbb{F}}$ classifies
the following pairs $(\underline{X},\rho_{X})$ up to prime-to-$p$-isogenies.
Given a $\overline{\mathbb{F}}$-scheme $S$, $\underline{X}$ is
a $p$-divisible group with unitary structure of signature $(n-1,1)$
over $S$ and $\rho_{X}:\underline{X}\longrightarrow\underline{X}'_{S}$
is a quasi-isogeny such that $\rho_{X}^{*}(\overline{\lambda}')=p^{c}\overline{\lambda}$
for some $c\in\mathbb{Z}$.

Dieudonné theory gives a bijection between $\mathcal{N}'(\overline{\mathbb{F}})$
and the set of quasi-unitary Dieudonné modules $M\subset N'$ of signature
$(n-1,1)$ such that $p^{c}M^{\vee}=M$ for some $c\in\mathbb{Z}$.
If $(\underline{X},\rho_{X})\in\mathcal{N}'(S)$, there is a unique
tuple $\underline{A}=(A,\iota,\overline{\lambda},\overline{\eta})$
over $S$ with a quasi-isogeny $f:\underline{A}\rightarrow\underline{A}'$
lifting $\rho_{X}$. We write $\underline{A}=\rho_{X}^{*}\underline{A}'$.

If $g\in G(\mathbb{A}_{f})$, and $\underline{A}=(A,\iota,\overline{\lambda},\overline{\eta})$
is a tuple over $S$, we define $\left\langle g\right\rangle \underline{A}=(A,\iota,\overline{\lambda},\overline{g\circ\eta})$.
The uniformization morphism is given by: 
\[
\begin{array}{ccccc}
\Theta & : & \mathcal{N}'\times G(\mathbb{A}_{f}^{p}) & \longrightarrow & \overline{Sh_{K}}^{ss}\times\overline{\mathbb{F}}\\
 &  & (X,\rho_{X})\times g & \longmapsto & \left\langle g\right\rangle \rho_{X}^{*}\underline{A}'
\end{array}.
\]

Let $I$ be the algebraic group over $\mathbb{Q}$ of $\mathcal{O}_{E,(p)}$-linear
quasi-isogenies in $\textrm{End}^{0}(A')$ compatible with $\overline{\lambda}'$.
We have a natural homomorphism $\alpha_{p}:I(\mathbb{Q}_{p})\hookrightarrow J(\mathbb{Q}_{p})$,
where $J$ denotes the $\mathbb{Q}_{p}$-algebraic group of automorphisms
of $N'$ respecting the polarization up to factor. An element $\eta'\in\overline{\eta'}$
provides a homomorphism $\alpha^{p}:I(\mathbb{Q})\rightarrow J(\mathbb{A}_{f}^{p})$
(for more details, see \cite{RZ}, 6.15). We have the following theorem
(\cite{RZ} theorem 6.30):
\begin{thm}
The uniformization theorem induces an isomorphism of $\overline{\mathbb{F}}$-schemes:
\[
I(\mathbb{Q})\diagdown\mathcal{N}{}_{red}'\times G(\mathbb{A}_{f}^{p})\diagup K^{p}\longrightarrow\overline{Sh_{K}}^{ss}\times\overline{\mathbb{F}}.
\]

\end{thm}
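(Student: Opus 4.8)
The plan is to obtain this as the basic-locus case of the Rapoport--Zink uniformization theorem \cite{RZ}, Theorem~6.30, so that the work splits into (a) matching our PEL datum with the hypotheses of loc.\ cit.\ and identifying the basic locus with the supersingular locus, and (b) recalling why the general theorem holds. The morphism $\Theta$ and the groups $I$, $J$ together with $\alpha_p$, $\alpha^p$ have already been introduced, so what remains is to prove that the map induced by $\Theta$ on the double quotient is bijective on geometric points and a local isomorphism.

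For (a): $G$ is a unitary similitude group, unramified at $p$, with $K_p$ hyperspecial and $\Lambda$ self-dual, so the datum is of the type considered in \cite{RZ}. The key input is that the basic Newton stratum of $\overline{Sh_K}$ is the supersingular locus $\overline{Sh_K}^{ss}=\bigsqcup_{s\textrm{ odd}}\mathcal{M}_s$ recalled above: by Proposition~\ref{isoc} an isocrystal of signature $(n-1,1)$ is isoclinic --- equivalently, basic for $G$ --- exactly when the integer $r$ appearing there is $0$, and by the Newton stratification this happens precisely on the strata $\mathcal{M}_s$ with $s$ odd. In particular $N'=M'\otimes W_{\mathbb{Q}}$ is, up to isomorphism, the unique supersingular unitary isocrystal of signature $(n-1,1)$, and $\mathcal{N}'$ is the Rapoport--Zink space attached to $\underline{X}'$ and $K_p$.

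For (b), I would recall the proof of \cite{RZ}, Theorem~6.30 through its three main points. (i) $\Theta$ is well defined and $I(\mathbb{Q})$-invariant: the pullback $(\underline{X},\rho_X)\mapsto\rho_X^*\underline{A}'$ rigidly algebraises a quasi-isogeny of $p$-divisible groups to one of abelian schemes --- over an $\overline{\mathbb{F}}$-scheme this lift exists and is unique, by Serre--Tate and the rigidity of quasi-isogenies --- so $\Theta$ exists, and replacing $(\rho_X,g)$ by $(\gamma\circ\rho_X,\alpha^p(\gamma)\,g)$ for $\gamma\in I(\mathbb{Q})$ alters $\rho_X^*\underline{A}'$ only by the isomorphism of tuples induced by $\gamma$, so $\Theta$ descends to the quotient. (ii) $\Theta$ is bijective on $\overline{\mathbb{F}}$-points with fibres the $I(\mathbb{Q})$-orbits: every supersingular tuple $\underline{A}$ is $\mathcal{O}_{E,(p)}$-linearly quasi-isogenous, compatibly with the polarization, to $\underline{A}'$, because its isocrystal must be $N'$ by (a), any quasi-unitary Dieudonn\'e lattice in $N'$ is $J(\mathbb{Q}_p)$-conjugate to one coming from $\mathcal{N}'$, and the prime-to-$p$ level structures are matched via $\alpha^p$; the adelic bookkeeping of \cite{RZ}, 6.15--6.23 then converts this into the double-coset description. (iii) $\Theta$ is formally \'etale: by the Serre--Tate theorem the deformations of $\underline{A}$ along $\overline{Sh_K}$ coincide with those of $\underline{X}$ along $\mathcal{N}'$, so $\Theta$ induces isomorphisms on complete local rings; combined with (ii) it is an isomorphism of formal schemes, and passing to the underlying reduced subschemes gives the stated isomorphism of $\overline{\mathbb{F}}$-schemes.

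The main obstacle is point (ii): proving that the supersingular locus forms a single $\mathcal{O}_{E,(p)}$-linear quasi-isogeny class, and translating this into the double coset $I(\mathbb{Q})\backslash\mathcal{N}'_{red}\times G(\mathbb{A}_f^p)/K^p$. For our group the first half is exactly the isocrystal classification of Proposition~\ref{isoc}; the second half, together with the fact that the quotient is a scheme whose formation commutes with taking reduced special fibres, rests on $I$ being an inner form that is anisotropic modulo its centre, so that $I(\mathbb{Q})$ is discrete in $I(\mathbb{A}_f)$ and acts properly discontinuously on $\mathcal{N}'_{red}\times G(\mathbb{A}_f^p)/K^p$ once $K^p$ is small enough.
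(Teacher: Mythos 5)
Your proposal is correct and follows the same route as the paper: the paper gives no proof of this statement at all, simply quoting it as Theorem 6.30 of Rapoport--Zink, and your sketch is a faithful unwinding of that citation (identification of the basic locus with the supersingular locus via Proposition \ref{isoc}, well-definedness and descent of $\Theta$, bijectivity on geometric points from the single quasi-isogeny class, and formal \'etaleness via Serre--Tate). Nothing in your outline conflicts with the setup of the paper, so it can stand as an expanded justification of the citation.
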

\begin{flushleft}
Write $I(\mathbb{Q})\diagdown G(\mathbb{A}_{f}^{p})\diagup K^{p}=\left\{ g_{1},...,g_{m}\right\} $
and $\Gamma_{j}=I(\mathbb{Q})\cap g_{j}K^{p}g_{j}^{-1}$. There is
a decomposition: 
\[
I(\mathbb{Q})\diagdown\mathcal{N}_{red}'\times G(\mathbb{A}_{f}^{p})\diagup K^{p}=\coprod_{j=1}^{m}\Gamma_{j}\diagdown\mathcal{N}_{red}'.
\]
We now recall some results from \cite{V1,V2}. Notice that in these
articles the signature of $A'$ is $(1,n-1)$. That's why we modify
slightly the definition of $\mathcal{L}_{i}(n)$ (the integer $i$
is replaced by $i-1$). The scheme $\mathcal{N}_{red}'$ has a stratification
\[
\mathcal{N}_{red}'=\bigcup_{i\in2\mathbb{Z}}\mathcal{N}'_{red,i}
\]
where $\mathcal{N}'_{red,i}$ is the open and closed subscheme of
elements of multiplicator $i$. Observe that $\mathcal{N}'_{red,i}$
is empty if $i$ is odd (\cite{V2} (1.5.1)). For $i$ even, all the
$\mathcal{N}'_{red,i}$ are isomorphic to one another \textsl{(ibid},
proposition 1.1). Write $\mathbf{N}'_{0}=\left\{ x\in N'_{e},\tau x=x\right\} $,
where $\tau=p^{-1}F^{2}$. This is a $\mathbb{Q}_{p^{2}}$-hermitian
space for the form $\left\{ x,y\right\} =\alpha\left\langle x,Fy\right\rangle $.
Define:
\begin{equation}
\mathcal{L}_{i}(n)=\left\{ L\subset\mathbf{N}'_{0},\mathbb{Z}_{p^{2}}\textrm{-lattice},L=p^{i-1}L^{\wedge}\right\} \label{Li}
\end{equation}
where $L^{\wedge}$ is the dual lattice with respect to $\left\{ ,\right\} $.
For each $L\in\mathcal{L}_{i}(n)$, there is an associated closed
subscheme $\mathcal{N}'_{L}\subset\mathcal{N}'_{red,i}$. We have
the following decomposition in irreducible components:
\[
\mathcal{N}_{red,i}'=\bigcup_{L\in\mathcal{L}_{i}(n)}\mathcal{N}'_{L}
\]
(\textsl{ibid}, theorem 4.2). The $\mathcal{N}'_{L}$ are all isomorphic
for $L\in\mathcal{L}_{i}(n)$, smooth, of dimension $\frac{n-1}{2}$.
We say that a point $(\underline{X},\rho_{X})\in\mathcal{N}_{red}'$
has type $r$ if its Dieudonné module has type $r$ (see 2.4.1). The
smooth locus of $\mathcal{N}_{red}'$ is the set of points of type
$n$.
\par\end{flushleft}

Further, there is a bijection between quasi-unitary superspecial Dieudonné
modules $M\subset N'$ of signature $(n,0)$ such that $p^{i-1}M^{\vee}=M$
and lattices in $\mathcal{L}_{i}(n)$. The bijection is given by $M\mapsto M_{e}^{\tau}$.
If $L\in\mathcal{L}_{i}(n)$, write $L^{+}$ for the associated superspecial
Dieudonné module of signature $(n,0)$. We thus get a bijection between
irreducible components of $\mathcal{N}_{red,i}'$ and quasi-unitary
superspecial Dieudonné modules of signature $(n,0)$ which satisfy
$p^{i-1}M^{\vee}=M$. If $y$ a point in $\mathcal{N}'_{red,i}(\overline{\mathbb{F}})$
with Dieudonné module $M$, then $y$ lies in $\mathcal{N}'_{L}(\overline{\mathbb{F}})$
if and only if $M\subset L^{+}$ (\textsl{ibid}, lemma 3.3). If $y\in\mathcal{N}'_{L}(\overline{\mathbb{F}})$
has type $n$, then $L^{+}=\Lambda^{+}(M)$, the smaller superspecial
Dieudonné module containing $M$.

\subsection{Dimension of the fibers of $s,t$}

Let $c\geq0$ be a fixed even integer and $x$ be an $\overline{\mathbb{F}}$-valued
point of $\overline{Sh_{K}}^{ss}$, corresponding to a tuple $\underline{A}'=(A',\iota',\overline{\lambda}',\overline{\eta}')$
over $\overline{\mathbb{F}}$. Let $M'$ be its Dieudonné module and
$N'$ its isocrystal. We write $t_{c}^{-1}(x)$ for the fibre of $t$
above $x$ in $p\textrm{-}\mathscr{I}sog^{(c)}\times\overline{\mathbb{F}}$.
We consider the moduli space $\mathcal{N}_{red}'$ associated to $\underline{A}'$,
as above. We assume that $K^{p}$ satisfies the condition of remark
\ref{isomtriv}. Then, there is a well-defined morphism of $\overline{\mathbb{F}}$-schemes:
\[
\epsilon:t_{c}^{-1}(x)\longrightarrow\mathcal{N}_{red}'
\]
sending an isogeny $f:\underline{A}\rightarrow\underline{A}'$ on
the induced isogeny $f:\underline{X}\rightarrow\underline{X}'$ on
the $p$-divisibles groups (forgetting the level structure). It can
be showed that $\epsilon$ is proper using the valuative criterion.
Further, $\epsilon$ is injective on $S$-points for all $\overline{\mathbb{F}}$-scheme
$S$, since $\underline{A}$ can be reconstructed from $f:\underline{X}\rightarrow\underline{X}'$.
Thus $\epsilon$ is a closed immersion (\cite{EGA4}, 8.11.5).

The $\overline{\mathbb{F}}$-points of $t_{c}^{-1}(x)$ are in bijection
with the quasi-unitary Dieudonné $M$ modules over $k$ of signature
$(n-1,1)$ satisfying $M\subset M'$ and $p^{c}M^{\vee}=M$. The map
$\epsilon$ induces the natural injection of this set into $\mathcal{N}_{red}'(\overline{\mathbb{F}})$.
If $f:\underline{A}\rightarrow\underline{A}'$ lies in $t_{c}^{-1}(x)$,
then $\rho_{X}^{*}\underline{A}'=\underline{A}$. Embed $\mathcal{N}_{red}'$
into $\mathcal{N}_{red}'\times G(\mathbb{A}_{f})$ by $\alpha:z\mapsto(z,1)$.
There is a commutative diagram:

$$\xymatrix{I(\mathbb{Q})\diagdown\mathcal{N}{}_{red}'\times G(\mathbb{A}_{f}^{p})\diagup K^{p}\ar[r]^-{\Theta} & \overline{Sh_{K}}^{ss}\times k\\ \mathcal{N}{}_{red}'\ar[u]^{\alpha} & t_{c}^{-1}(x)\ar[l]_{\epsilon}\ar[u]_{s}}$$

\begin{prop}
The restriction of $s$ to $t_{c}^{-1}(x)$ is a finite morphism.\end{prop}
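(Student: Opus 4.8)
The plan is to show that the restriction of $s$ to $t_c^{-1}(x)$ is proper with finite fibres, hence finite. Properness is immediate: the map $\epsilon : t_c^{-1}(x)\to\mathcal{N}'_{red}$ is a closed immersion, the $\mathcal{N}'_L$ covering $\mathcal{N}'_{red,c}$ are quasi-compact, and $s$ factors through the uniformization morphism $\Theta$ restricted to the image $\alpha(\mathcal{N}'_{red})$, which is proper over $\overline{Sh_K}^{ss}\times\overline{\mathbb{F}}$ after quotienting by $I(\mathbb{Q})$. Concretely, $t_c^{-1}(x)$ is a closed subscheme of $p\text{-}\mathscr{I}sog^{(c)}\times\overline{\mathbb{F}}$, and $s:p\text{-}\mathscr{I}sog^{(c)}\to Sh_K$ is proper (stated in the excerpt, from \cite{BM} 4.2.1); so $s|_{t_c^{-1}(x)}$ is proper. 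It therefore suffices to prove that the fibres of $s|_{t_c^{-1}(x)}$ are finite, i.e.\ that for a second supersingular point $y\in\overline{Sh_K}^{ss}(\overline{\mathbb{F}})$ the set $s^{-1}(y)\cap t_c^{-1}(x)$ is finite, and then that a proper morphism with finite fibres is finite (by \cite{EGA4} or by properness $+$ quasi-finiteness).

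The key step is the finiteness of fibres. Via Dieudonn\'e theory, $\overline{\mathbb{F}}$-points of $t_c^{-1}(x)$ correspond to quasi-unitary Dieudonn\'e modules $M\subset M'$ of signature $(n-1,1)$ with $p^cM^\vee=M$, where $M'=\mathbb{D}(A')$. Fixing also the source $y$ amounts to fixing the isomorphism class of $M$ as an abstract quasi-unitary Dieudonn\'e module (together with the prime-to-$p$ level data, which is rigid by Remark~\ref{isomtriv}). Now $M\subset M'$ with $p^cM^\vee=M$ forces $p^{c}M'\subset M\subset M'$ (apply $\vee$ to $M\subset M'$ and use $p^cM^\vee=M$, together with $p^{c'}M'^\vee=M'$ for the appropriate $c'$), so $M$ is squeezed between two fixed lattices. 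The set of $W$-lattices between $p^cM'$ and $M'$ is finite. Hence $s^{-1}(y)\cap t_c^{-1}(x)$ injects into a finite set, and since $t_c^{-1}(x)$ is of finite type over $\overline{\mathbb{F}}$, finiteness on $\overline{\mathbb{F}}$-points gives that the scheme-theoretic fibre is finite.

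Putting this together: $s|_{t_c^{-1}(x)}$ is proper (from properness of $s$ on $p\text{-}\mathscr{I}sog^{(c)}$) and quasi-finite (by the lattice-squeezing argument above), hence finite by \cite{EGA4}, 8.11.1, or by the standard fact that proper $+$ quasi-finite $=$ finite (\cite{EGA4}, 18.12.4). I expect the main obstacle to be purely bookkeeping: making sure that fixing the source point $y$ really does pin $M$ down to a lattice trapped between $p^cM'$ and $M'$ rather than merely up to the $I(\mathbb{Q})$-action, and checking that the prime-to-$p$ level structure contributes no extra moduli — but this is exactly what the injectivity of $\epsilon$ on $S$-points (established just above in the excerpt) and Remark~\ref{isomtriv} are there to guarantee, so no genuinely new input is needed beyond the uniformization set-up already recorded.
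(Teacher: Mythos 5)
Your reduction to ``proper $+$ quasi-finite $\Rightarrow$ finite'' is reasonable, and the properness half is correct: $t_{c}^{-1}(x)$ is closed in $p\textrm{-}\mathscr{I}sog^{(c)}\times\overline{\mathbb{F}}$ and $s$ is proper there. The gap is in the quasi-finiteness step. The assertion that ``the set of $W$-lattices between $p^{c}M'$ and $M'$ is finite'' is false: $W=W(\overline{\mathbb{F}})$ has infinite residue field, so already the $W$-submodules of $M'/pM'$ form infinite Grassmannians over $\overline{\mathbb{F}}$. In fact the whole scheme $t_{c}^{-1}(x)$ parametrizes exactly such squeezed lattices (every quasi-unitary $M$ with $p^{c}M^{\vee}=M\subset M'$ automatically satisfies $p^{c}M'\subset M\subset M'$, since $M'$ is unitary), and the corollary immediately following the proposition shows that this scheme has dimension $\frac{n-1}{2}>0$; so no squeezing argument of this kind can yield a finite set. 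Fixing the source $y$, i.e.\ the isomorphism class of $M$ together with its level data, does make the fibre finite, but not for a reason visible in Dieudonn\'e theory alone: $\textrm{Hom}_{W[F,V]}(\mathbb{D}(A),\mathbb{D}(A'))$ is only a $\mathbb{Z}_{p}$-module, and the locus where the degree form takes a fixed value need not be finite $p$-adically. The input actually needed is global: $R=\textrm{Hom}(A,A')$ is a finitely generated $\mathbb{Z}$-module on which the form $q(f)=\textrm{Tr}\left(d^{2}\lambda_{0}^{-1}\circ f^{\vee}\circ\lambda_{1}\circ f\right)$ is positive definite, so that $\left\{ f\in R,\: q(f)=d^{2}p^{c}\right\} $ is finite. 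This is precisely the mechanism exploited later in the proof of theorem \ref{stimfer}, and your proof would need to import it (or an equivalent).

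The paper itself bypasses all of this by quoting \cite{V2}, 5.4: the composite of the uniformization morphism $\Theta$ with $\alpha:z\mapsto(z,1)$ is finite on every quasi-compact subscheme of $\mathcal{N}'_{red}$, and $\epsilon$ identifies $t_{c}^{-1}(x)$ with such a subscheme, so $s|_{t_{c}^{-1}(x)}=\Theta\circ\alpha\circ\epsilon$ is finite. If you want a self-contained argument along your lines, keep your properness step but replace the lattice count by the positive-definiteness argument above, or simply by the cited finiteness result.
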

\begin{proof}
The restriction of $\alpha$ to any quasi-compact subscheme of $\mathcal{N}_{red}'$
is finite (see \cite{V2}, 5.4).\end{proof}
\begin{cor}
The morphism 
\[
p\textrm{-}\mathscr{I}sog^{(c),ss}\times\kappa(\mathcal{O}_{E_{p}})\overset{(s,t)}{\longrightarrow}\overline{Sh_{K}}^{ss}\times\overline{Sh_{K}}^{ss}
\]
 is finite.\end{cor}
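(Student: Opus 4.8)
The plan is to prove the Corollary by showing that the map $(s,t)$ in question is proper and quasi-finite, hence finite. Properness is already available: by the discussion in Section 3.1, the restrictions $s,t : p\text{-}\mathscr{I}sog^{(c)} \rightarrow Sh_K$ are proper for $c\geq 0$ (citing \cite{BM}, 4.2.1), and hence so is the map $(s,t) : p\text{-}\mathscr{I}sog^{(c),ss}\times\kappa(\mathcal{O}_{E_p}) \rightarrow \overline{Sh_K}^{ss}\times\overline{Sh_K}^{ss}$ obtained by restriction to the closed supersingular locus in source and target. Since a proper quasi-finite morphism is finite (\cite{EGA4}, 8.11.1), it suffices to bound the fibers.

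So the core of the argument is to show that the fibers of $(s,t)$ are finite, i.e., that for a fixed pair of supersingular points $(x_1, x_2)$ in $\overline{Sh_K}^{ss}\times\overline{Sh_K}^{ss}$, there are only finitely many $p$-isogenies $f : \underline{A}_1 \rightarrow \underline{A}_2$ of multiplicator $c$. The fiber of $(s,t)$ over $(x_1,x_2)$ is a closed subscheme of the fiber $t_c^{-1}(x_2)$, so it suffices to know that $t_c^{-1}(x_2)$ itself has a finite fiber under $s$ over any point $x_1$ — but that is exactly the content of the preceding Proposition, which states that the restriction of $s$ to $t_c^{-1}(x)$ is a finite morphism. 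Intersecting the finite $\overline{\mathbb{F}}$-scheme $t_c^{-1}(x_2)$ with the closed condition $s = x_1$ gives a finite scheme, hence a finite (indeed, finite-length) fiber of $(s,t)$. Since this holds for every geometric point of $\overline{Sh_K}^{ss}\times\overline{Sh_K}^{ss}$, the morphism $(s,t)$ restricted to $p\text{-}\mathscr{I}sog^{(c),ss}\times\kappa(\mathcal{O}_{E_p})$ is quasi-finite.

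I would then assemble these two facts. The map factors as
\[
p\text{-}\mathscr{I}sog^{(c),ss}\times\kappa(\mathcal{O}_{E_p}) \xrightarrow{(s,t)} \overline{Sh_K}^{ss}\times\overline{Sh_K}^{ss},
\]
it is proper by restriction of the proper maps $s,t$ on $p\text{-}\mathscr{I}sog^{(c)}$, and it is quasi-finite by the Proposition above applied to $t$ (equivalently $s$). A proper quasi-finite morphism of noetherian schemes is finite, which gives the claim for each multiplicator $c$. Since $p\text{-}\mathscr{I}sog^{ss}\times\kappa(\mathcal{O}_{E_p})$ decomposes as a finite disjoint union over the even multiplicators $c$ that actually occur (the supersingular locus is bounded, so only finitely many $c$ contribute, and odd $c$ give empty strata by the parity observation from \cite{V2}), the full morphism on $p\text{-}\mathscr{I}sog^{ss}\times\kappa(\mathcal{O}_{E_p})$ is a finite disjoint union of finite morphisms, hence finite.

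The step I expect to be the main obstacle — though it has essentially been carried out in the text already — is the finiteness of $s$ on $t_c^{-1}(x)$: this rested on identifying $t_c^{-1}(x)$ with a closed subscheme of the Rapoport–Zink space $\mathcal{N}'_{red}$ via the closed immersion $\epsilon$, and on the fact (from \cite{V2}, 5.4) that the map $\alpha : \mathcal{N}'_{red} \hookrightarrow I(\mathbb{Q})\backslash \mathcal{N}'_{red}\times G(\mathbb{A}_f^p)/K^p$ is finite on quasi-compact subschemes, combined with $\Theta$ being an isomorphism. The only point needing a word of care is that $t_c^{-1}(x)$ is quasi-compact (it is proper over $\overline{\mathbb{F}}$ via $\epsilon$, being a closed subscheme of the quasi-compact $\mathcal{N}'_{red,c}$), so that the quasi-compactness hypothesis in the cited finiteness of $\alpha$ is met.
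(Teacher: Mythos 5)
Your proposal is correct and follows exactly the paper's (one-line) argument: the map is proper because it is the restriction of the proper maps $s,t$ to closed subschemes, and quasi-finite by the preceding proposition on the fibers $t_c^{-1}(x)$, hence finite. The extra paragraph about assembling over varying multiplicators $c$ is harmless but unnecessary, since the statement already fixes $c$.
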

\begin{proof}
It is proper and quasi-finite.\end{proof}
\begin{cor}
Let $x\in\overline{Sh_{K}}^{ss}(\overline{\mathbb{F}})$ and $c\geq2$
be an even integer. The dimension of the fibre $t_{c}^{-1}(x)$ is
$\frac{n-1}{2}$.\end{cor}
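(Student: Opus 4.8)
The plan is to bound $\dim t_c^{-1}(x)$ from above and below by comparing $t_c^{-1}(x)$ with the Rapoport--Zink space $\mathcal{N}'_{red}$ through the closed immersion $\epsilon$ constructed above. For the upper bound, note that a $p$-isogeny $f\colon\underline{A}\to\underline{A}'$ of multiplicator $c$ induces on $p$-divisible groups a quasi-isogeny $\rho_X$ with $\rho_X^{*}(\overline{\lambda}')=p^{c}\overline{\lambda}$, so $\epsilon$ factors through the open and closed subscheme $\mathcal{N}'_{red,c}\subset\mathcal{N}'_{red}$ of elements of multiplicator $c$. Since $c$ is even, $\mathcal{N}'_{red,c}=\bigcup_{L\in\mathcal{L}_{c}(n)}\mathcal{N}'_{L}$ is a locally finite union of its irreducible components, each of which is smooth of dimension $\frac{n-1}{2}$; hence $\mathcal{N}'_{red,c}$ is equidimensional of dimension $\frac{n-1}{2}$, and as $\epsilon$ is a closed immersion we get $\dim t_c^{-1}(x)\leq\frac{n-1}{2}$.

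For the lower bound it suffices to show that the image of $\epsilon$ contains at least one full irreducible component $\mathcal{N}'_{L}$. On $\overline{\mathbb{F}}$-points, $\epsilon$ identifies $t_c^{-1}(x)(\overline{\mathbb{F}})$ with the set of quasi-unitary Dieudonn\'e modules $M$ of signature $(n-1,1)$ satisfying $p^{c}M^{\vee}=M$ and $M\subseteq M'$, whereas $\mathcal{N}'_{L}(\overline{\mathbb{F}})$ is the set of such $M$ with $M\subseteq L^{+}$, where $L^{+}$ is the superspecial Dieudonn\'e module of signature $(n,0)$ with $p^{c-1}(L^{+})^{\vee}=L^{+}$ attached to $L$. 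Thus it is enough to produce one $L\in\mathcal{L}_{c}(n)$ with $L^{+}\subseteq M'$: then $M\subseteq L^{+}$ forces $M\subseteq M'$, so $\mathcal{N}'_{L}\subseteq\epsilon(t_c^{-1}(x))$ and $\dim t_c^{-1}(x)\geq\frac{n-1}{2}$. One first reduces to $c=2$: if $L_{0}\in\mathcal{L}_{2}(n)$ has $L_{0}^{+}\subseteq M'$, then $p^{(c-2)/2}L_{0}^{+}$ is again superspecial of signature $(n,0)$, lies in $p^{(c-2)/2}M'\subseteq M'$, and has multiplicator $c$, so it corresponds to an element of $\mathcal{L}_{c}(n)$ with the required property. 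For $c=2$ one must exhibit a superspecial Dieudonn\'e module $L^{+}$ of signature $(n,0)$ with $p(L^{+})^{\vee}=L^{+}$ and $L^{+}\subseteq M'$; via the bijection $L\mapsto L^{+}$ this amounts to finding a lattice $L\in\mathcal{L}_{2}(n)$ in $\mathbf{N}'_{0}$ dominated by the $\mathbb{Z}_{p^{2}}$-lattice cut out by $M'$, which one obtains from the structure of the hermitian space $(\mathbf{N}'_{0},\{\,,\,\})$ exploited in \cite{V1,V2} (this is where the hypotheses that $c\geq2$ and that $n$ is odd enter). Combining with the upper bound gives $\dim t_c^{-1}(x)=\frac{n-1}{2}$.

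The main obstacle is this lower bound, and specifically the existence of the superspecial submodule $L^{+}\subseteq M'$: one has to verify it uniformly as $x$, i.e.\ $M'$, ranges over all supersingular quasi-unitary Dieudonn\'e modules of signature $(n-1,1)$, that is, over all Ekedahl--Oort types $r$ with $r$ odd and $1\leq r\leq n$. A formally equivalent route is to use that $s$ restricted to $t_c^{-1}(x)$ is finite, so that $\dim t_c^{-1}(x)$ equals the dimension of its image in $\overline{Sh_{K}}^{ss}$, and then to prove that this image contains an irreducible component of $\overline{Sh_{K}}^{ss}$; by the uniformization isomorphism this is the same assertion, and no easier.
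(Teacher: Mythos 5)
Your plan matches the paper's proof almost line for line: the upper bound comes from the closed immersion $\epsilon$ into $\mathcal{N}'_{red,c}$, which is equidimensional of dimension $\frac{n-1}{2}$; the lower bound comes from producing an irreducible component $\mathcal{N}'_{L}$ contained in the image of $\epsilon$, i.e.\ from finding a quasi-unitary superspecial Dieudonn\'e module $M_0\subset M'$ of signature $(n,0)$ with $p^{c-1}M_0^{\vee}=M_0$. The one place where you hesitate --- the existence of such $M_0$ --- is exactly what the paper's proof also asserts without elaboration (``Let $M_0$ be any quasi-unitary Dieudonn\'e module of signature $(n,0)$ such that $M_0\subset M$ and $M_0^{\vee}=p^{c-1}M_0$''); it is a standard consequence of the lattice description in \cite{V1,V2}, and in fact when $x$ has type $n$ the paper even names the (unique, for $c=2$) choice $M_0=p\Lambda^{+}(M)$ in Remark \ref{t2}. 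So you have not introduced a gap that the paper has filled; you have simply flagged the same appeal to the classification results. Your reduction to $c=2$ via multiplication by $p^{(c-2)/2}$ is correct but unnecessary: the paper handles an arbitrary even $c\geq 2$ directly by choosing $M_0$ with the condition $M_0^{\vee}=p^{c-1}M_0$ built in.
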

\begin{proof}
Clearly $\textrm{dim}(t_{c}^{-1}(x))\leq\frac{n-1}{2}$. Let $M$
be the Dieudonné module associated to $x$. Let $M_{0}$ be any quasi-unitary
Dieudonné module of signature $(n,0)$ such that $M_{0}\subset M$
and $M_{0}^{\vee}=p^{c-1}M_{0}$. The irreducible component of $\mathcal{N}_{red}'$
associated to $M_{0}$ is then contained in $t_{c}^{-1}(x)$.\end{proof}
\begin{rem}
\label{t2}When $c=2$ and $x$ has type $n$, there is only one such
$M_{0}$ (namely $p\Lambda^{+}(M)$). Therefore, the fibre $t_{c}^{-1}(x)$
has only one irreducible component of dimension $\frac{n-1}{2}$.
\end{rem}

\subsection{Irreducible components of $p\textrm{-}\mathscr{I}sog\times\overline{\mathbb{F}}$}
\begin{prop}
\label{comppisog}$p\textrm{-}\mathscr{I}sog\times\overline{\mathbb{F}}$
is equidimensional of dimension $n-1$. If an irreducible component
of $p\textrm{-}\mathscr{I}sog\times\overline{\mathbb{F}}$ intersects
the ordinary locus, it is contained in the closure of $p\textrm{-}\mathscr{I}sog^{ord}\times\overline{\mathbb{F}}$.
Otherwise, all its points are supersingular.\end{prop}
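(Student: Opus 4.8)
The plan is to work component by component on $p\textrm{-}\mathscr{I}sog^{(c)}\times\overline{\mathbb{F}}$ for each multiplicator $c\geq 0$, since this is an open and closed decomposition; fixing $c$, the map $t\colon p\textrm{-}\mathscr{I}sog^{(c)}\times\overline{\mathbb{F}}\to\overline{Sh_K}$ is proper. First I would handle the equidimensionality. On the ordinary locus, $s$ (hence $t$) is finite and flat over $\overline{Sh_K}^{ord}$ by \cite{BM}, 4.2.2, so the ordinary part of each $p\textrm{-}\mathscr{I}sog^{(c)}\times\overline{\mathbb{F}}$ is equidimensional of dimension $\dim\overline{Sh_K}^{ord}=n-1$; taking closures, every irreducible component meeting the ordinary locus has dimension $n-1$. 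For the components lying entirely in the supersingular locus, I would bound the dimension using the fibration over $\overline{Sh_K}^{ss}$: by the corollaries of the previous subsection, for $c\geq 2$ even the fibres $t_c^{-1}(x)$ have dimension $\tfrac{n-1}{2}$ over each supersingular point $x$, while $\overline{Sh_K}^{ss}$ has dimension $\tfrac{n-1}{2}$; hence each supersingular component of $p\textrm{-}\mathscr{I}sog^{(c)}\times\overline{\mathbb{F}}$ has dimension at most $\tfrac{n-1}{2}+\tfrac{n-1}{2}=n-1$. (For $c=0$ the space is $\overline{Sh_K}$ itself and for odd $c$ the supersingular part is empty since the multiplicator on a supersingular module must be even.)

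The harder direction is the reverse inequality: a supersingular component must have dimension \emph{exactly} $n-1$, and in fact must arise as a limit of ordinary components. Here I would use the uniformization picture of Section 4 on both source and target. Over a supersingular point $x$ of type $n$ with $c=2$, Remark \ref{t2} shows $t_2^{-1}(x)$ has a unique component of top dimension $\tfrac{n-1}{2}$, corresponding to $p\Lambda^+(M)$; letting $x$ vary over the open dense stratum $\mathcal{M}_n\subset\overline{Sh_K}^{ss}$ (whose closure is all of $\overline{Sh_K}^{ss}$) sweeps out a subvariety of $p\textrm{-}\mathscr{I}sog^{(2)}\times\overline{\mathbb{F}}$ of dimension $\tfrac{n-1}{2}+\tfrac{n-1}{2}=n-1$ which is irreducible if $\mathcal{N}'_L$ is (it is smooth of dimension $\tfrac{n-1}{2}$ by \cite{V2}). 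One then checks that a supersingular irreducible component of $p\textrm{-}\mathscr{I}sog^{(c)}\times\overline{\mathbb{F}}$ of dimension $<n-1$ cannot exist: projecting by $t$, its image in $\overline{Sh_K}^{ss}$ would have dimension $d<\tfrac{n-1}{2}$ or else some fibre would drop dimension, but the fibre dimension is constant equal to $\tfrac{n-1}{2}$ on the locus where it is nonempty (again by the corollaries above), forcing the component to have dimension $\tfrac{n-1}{2}+\dim(\textrm{image})$, and the image is a union of Newton/Ekedahl--Oort strata closure, the largest of which is $\overline{Sh_K}^{ss}$ of dimension $\tfrac{n-1}{2}$.

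Finally, for the dichotomy statement: if an irreducible component $C$ meets the ordinary locus, then $C\cap(p\textrm{-}\mathscr{I}sog^{ord}\times\overline{\mathbb{F}})$ is open and nonempty in $C$, hence dense (as $C$ is irreducible), so $C\subset\overline{p\textrm{-}\mathscr{I}sog^{ord}\times\overline{\mathbb{F}}}$. If instead $C$ does not meet the ordinary locus, it is contained in the closed complement, which is $p\textrm{-}\mathscr{I}sog^{ss}\times\overline{\mathbb{F}}$ by definition (the ordinary locus is the preimage of the open stratum $\overline{Sh_K}^{ord}$, whose complement in $\overline{Sh_K}$ is $\overline{Sh_K}^{ss}$ together with the other non-$\mu$-ordinary Newton strata — here, because the signature is $(n-1,1)$, the non-ordinary locus is precisely $\overline{Sh_K}^{ss}\sqcup\bigsqcup_{1<2i<n}\mathcal{M}_{2i}$, but these are all in the supersingular-or-intermediate locus; I would note that for components of $p\textrm{-}\mathscr{I}sog$ the relevant statement is that every non-ordinary point of $\overline{Sh_K}$ that supports a $p$-isogeny component of full dimension is supersingular, which follows from the isocrystal classification in Proposition \ref{isoc}). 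The main obstacle I anticipate is this last bookkeeping: showing a component missing the ordinary locus has \emph{all} points supersingular, rather than merely non-ordinary — this requires ruling out components dominating an intermediate Newton stratum $\mathcal{M}_{2i}$ with $i>1$ by a dimension count using that such strata have dimension $n-i<n-1$ while the generic $t$-fibre over them would still need dimension making the total $<n-1$, contradicting equidimensionality.
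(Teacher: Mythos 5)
Your proposal identifies the right set of ingredients (open-closed decomposition by multiplicator, finiteness over the ordinary locus, fibre dimension estimates over supersingular points) but the two central steps in the paper's proof are missing and your attempts to replace them do not close.

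First, the lower bound $\dim(C)\geq n-1$ for \emph{every} irreducible component $C$ of $p\text{-}\mathscr{I}sog^{(c)}\times\overline{\mathbb{F}}$. The paper takes this from \cite{BW1}, proposition 6.15 (a structural result on $p\text{-}\mathscr{I}sog$); it is the anchor of the whole argument. Your proposed substitute for supersingular components — that the $t$-fibre dimension ``is constant equal to $\tfrac{n-1}{2}$ on the locus where it is nonempty'' — conflates the dimension of the full fibre $t_c^{-1}(x)$ with the dimension of $t_c^{-1}(x)\cap C$. The corollary you invoke controls the former, not the latter: a single component $C$ could well contribute a lower-dimensional piece of $t_c^{-1}(x)$, so nothing forces $\dim(C)=\dim(t(C))+\tfrac{n-1}{2}$. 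The argument is circular in the sense that you would need to know $C$ already fills the fibres to conclude it has full dimension.

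Second, the step ruling out components supported on intermediate Newton strata. The paper handles this cleanly: if $C$ avoids the ordinary locus but has a non-supersingular point $z$, pick an open $U\subset\overline{Sh_K}$ around $t(z)$ missing $\overline{Sh_K}^{ss}$; on $t^{-1}(U)$ the map $t$ is \emph{finite} by \cite{BW1} corollary 7.3, so $\dim(t^{-1}(U)\cap C)\leq\dim(t(C))<n-1$, contradicting $\dim(C)\geq n-1$. You anticipate the difficulty (``ruling out components dominating $\mathcal{M}_{2i}$'') but your proposed dimension count asserts a bound on the generic $t$-fibre over intermediate strata without establishing it; the needed input is precisely this finiteness of $t$ over the non-supersingular locus, which you do not cite. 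Without it the count does not go through, since in principle $t$ could have positive-dimensional fibres there, allowing a full-dimensional component over a small Newton stratum.

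In short: you recover the easy half (ordinary components and the upper bound for supersingular ones) but the proof fundamentally rests on the two cited facts from \cite{BW1} — proposition 6.15 and corollary 7.3 — and your replacements for them do not hold up.
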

\begin{proof}
Let $C$ be an irreducible component of $p\textrm{-}\mathscr{I}sog^{(c)}\times\overline{\mathbb{F}}$
for $c\geq0$. Using \cite{BW1} (proposition 6.15), we have $\textrm{dim}(C)\geq n-1$.
Suppose that $C$ intersects $p\textrm{-}\mathscr{I}sog^{(c),ord}\times\overline{\mathbb{F}}$.
Since the ordinary locus is open, $C$ is contained in its closure
and $\textrm{dim}(C)=n-1$. Suppose that $C$ has no ordinary point,
and that there exists a non-supersingular point $z\in C$. There is
an open subset $U\subset\overline{Sh_{K}}$ containing $t(z)$ such
that $U\cap\overline{Sh_{K}}^{ss}=\emptyset$. Then $z$ is in $t^{-1}(U)\cap C$
so this is a non empty dense open subset of $C$. By \cite{BW1} corollary
7.3, the map $t$ is finite over $t^{-1}(U)$, thus:
\[
\textrm{dim}(t^{-1}(U)\cap C)=\textrm{dim}(t(t^{-1}(U)\cap C))\leq\textrm{dim}(t(C))<n-1
\]
because $t(C)$ doesn't meet $\overline{Sh_{K}}^{ord}$. This contradicts
$\textrm{dim}(C)\geq n-1$. We have shown that a component not intersecting
$p\textrm{-}\mathscr{I}sog^{(c),ord}\times\overline{\mathbb{F}}$
is supersingular. Finally, let $C$ be a supersingular irreducible
component. We have $\textrm{dim}(\overline{Sh_{K}}^{ss})=\frac{n-1}{2}=\textrm{dim}(t_{c}^{-1}(x))$
for all $x\in\overline{Sh_{K}}^{ss}(\overline{\mathbb{F}})$, so $\textrm{dim}(C)=n-1$.
\end{proof}

\subsection{The map $(s,t)$}
\begin{thm}
\label{stimfer}Let $c\geq0$ be an even integer. There exists $K^{p}\subset G(\mathbb{A}_{f}^{p})$
such that
\[
p\textrm{-}\mathscr{I}sog_{K}^{(c),ss}\times\kappa(\mathcal{O}_{E_{p}})\overset{(s,t)}{\longrightarrow}\overline{Sh_{K}}^{ss}\times\overline{Sh_{K}}^{ss}
\]
is a closed immersion.\end{thm}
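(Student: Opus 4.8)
The plan is to upgrade the previous corollary, which already established that $(s,t)$ restricted to $p\textrm{-}\mathscr{I}sog^{(c),ss}\times\kappa(\mathcal{O}_{E_p})$ is a finite morphism, to the statement that for a suitable choice of $K^p$ it is in fact a closed immersion. Since finite morphisms are proper and affine, by \cite{EGA4} (8.11.5) it suffices to show two things: that $(s,t)$ is a monomorphism (equivalently, injective on $S$-points for every $\overline{\mathbb{F}}$-scheme $S$), and that it induces closed immersions on the level of, say, reduced fibers — but in practice, once injectivity on $S$-points is known and the map is already proper, one concludes it is a closed immersion provided it is also unramified, or more simply that it is radicial and proper and a closed immersion will follow from it being a universally injective proper monomorphism that is locally of finite presentation. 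So the real content is: \emph{for $K^p$ small enough, a $p$-isogeny $f\colon\underline{A}_1\to\underline{A}_2$ between supersingular points is determined by the pair $(\underline{A}_1,\underline{A}_2)$}, functorially in the base.

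The key step is the following rigidity statement. Fix $x\in\overline{Sh_K}^{ss}(\overline{\mathbb{F}})$ corresponding to $\underline{A}'$, and work inside the fibre $t_c^{-1}(x)$, which by the discussion in Section 4.2 embeds via the closed immersion $\epsilon$ into $\mathcal{N}'_{red}$. Two isogenies $f,f'\colon\underline{A}_1,\underline{A}_1'\to\underline{A}'$ with the same target \emph{and} the same source (as points of $Sh_K$, i.e.\ with the prime-to-$p$ level structures matching) give rise, after applying $\epsilon$, to two points of $\mathcal{N}'_{red}$ lying in the same $I(\mathbb{Q})$-orbit under $\Theta$; concretely $f' = \gamma\circ f$ for some $\gamma\in I(\mathbb{Q})$ fixing the point, i.e.\ $\gamma\in\Gamma_j = I(\mathbb{Q})\cap g_jK^pg_j^{-1}$ for the relevant $j$. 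The point is to choose $K^p$ small enough that every such $\Gamma_j$ is trivial (equivalently, acts freely and without fixed points on $\mathcal{N}'_{red}$, as in Remark \ref{isomtriv}); this is possible because $I(\mathbb{Q})$ is a discrete subgroup and the $\Gamma_j$ are arithmetic, hence torsion-free for $K^p$ sufficiently deep. Then $f=f'$, which gives injectivity on $\overline{\mathbb{F}}$-points of each fibre; combined with the injectivity of $\epsilon$ on $S$-points already noted in Section 4.2, one upgrades this to injectivity on $S$-points of the full map $(s,t)$.

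Having injectivity on $S$-points for all $\overline{\mathbb{F}}$-schemes $S$, the map $(s,t)$ is a monomorphism; since it is also proper (indeed finite, by the preceding corollary) and of finite presentation, \cite{EGA4} (8.11.5) yields that it is a closed immersion. I expect the main obstacle to be the careful bookkeeping of level structures in the rigidity argument: one must check that making $f=f'$ as isogenies of abelian varieties is compatible with, and forced by, the agreement of the $K^p$-orbits $\overline{\eta}_1$ and $\overline{\eta}_1'$ at the source — in other words, that the group $I(\mathbb{Q})$ really is the only source of ambiguity, and that shrinking $K^p$ kills exactly this ambiguity. One also has to be slightly careful that the choice of $K^p$ can be made uniformly for all the finitely many points $x$ (equivalently all the finitely many cusps $g_1,\dots,g_m$ and groups $\Gamma_j$) and for the fixed multiplicator $c$; this is fine since there are finitely many conditions to satisfy, each an open (small $K^p$) condition.
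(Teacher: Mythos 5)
Your proposal takes a genuinely different route from the paper, but it has two real gaps. The paper does not use uniformization and neatness at all: it equips $R=\textrm{Hom}(A_0,A_1)$ with the positive-definite $\mathbb{Z}$-quadratic form $q(f)=\textrm{Tr}(d^2\lambda_0^{-1}f^\vee\lambda_1 f)$, shows by checking at each $\ell$ and at $p$ that the discriminant of $(R,q)$ is independent of the (supersingular) points, invokes finiteness of isomorphism classes of positive-definite $\mathbb{Z}$-forms of given discriminant, and then chooses $N$ (coprime to $p$) so large that two vectors of $R$ with the \emph{same bounded norm} $q(f)=q(g)=d^2p^c$ that are congruent mod $N$ must coincide. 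Taking $K'^p$ of level $N$ forces $f\equiv g\ (\mathrm{mod}\ NR)$ from equality of level structures at the source, whence $f=g$.

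The first gap in your argument is the step ``$\Gamma_j$ torsion-free $\Rightarrow\gamma=1$''. Torsion-free is not trivial, and arithmetic subgroups $\Gamma_j=I(\mathbb{Q})\cap g_jK^pg_j^{-1}$ are generally infinite; shrinking $K^p$ cannot make them trivial. To make your route work you would have to argue that the \emph{specific} $\gamma$'s arising are constrained (multiplicator $0$, hence lying in the kernel of the similitude character) and then use positivity of the Rosati involution to show $I^1(\mathbb{R})$ is compact, so that $\Gamma_j\cap I^1(\mathbb{Q})$ is finite and can then be killed by a neatness condition. None of this is in your write-up, and it is exactly this positivity that the paper instead packages into the positive-definiteness of $q$. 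The second gap is the passage from $\overline{\mathbb{F}}$-points to all $S$-points. Injectivity on geometric points together with injectivity of $\epsilon$ on $S$-points does \emph{not} give that $(s,t)$ is a monomorphism: you also need to control infinitesimal deformations. This is why the paper, after treating $\overline{\mathbb{F}}$-points, separately runs the argument over $R=\overline{\mathbb{F}}[t]/(t^2)$ and appeals to a rigidity lemma (\cite{GZR}, lemma 3.1) to conclude $f=g$ from $f_0=g_0$; only then does proper $+$ universally injective $+$ unramified yield a closed immersion.
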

\begin{proof}
We will use the moduli problems $\mathfrak{F},\mathfrak{I}$ described
in 2.1 and 3.1. Choose an $\mathcal{O}_{E}\left[\frac{1}{p}\right]$-lattice
$L\subset V$ satisfying the condition (\ref{condL}). Let $x_{i}=(A_{i},\lambda_{i},\iota_{i},\overline{\eta_{i}})$,
$i\in\left\{ 0,1\right\} $ be two points of $\mathfrak{F}(\overline{\mathbb{F}})$
with multiplicator $c$. We assume that there is an isogeny $h:(A_{0},\lambda_{0},\iota_{0})\rightarrow(A_{1},\lambda_{1},\iota_{1})$.
We write $R=\textrm{Hom}(A_{0},A_{1})$ for the group of homomorphisms
(with no compatibility condition). If $f,g\in R$, we write 
\[
\left\langle f,g\right\rangle =Tr\left(d^{2}\lambda_{0}^{-1}\circ f^{\vee}\circ\lambda_{1}\circ g\right)
\]
where $\textrm{Tr}:\textrm{End}(A_{0})\rightarrow\mathbb{Z}$ is the
trace morphism. Since $\lambda_{0}$ has degree $d^{2}$, the quasi-isogeny
$d^{2}\lambda_{0}^{-1}$ is an isogeny, thus $\left\langle f,g\right\rangle \in\mathbb{Z}$.
The form $\left\langle ,\right\rangle $ is symmetric, positive definite.
Indeed, $h$ identifies this form with one on $\textrm{End}(A_{0})\otimes\mathbb{Q}$
which is positive definite because the Rosati involution is. Define
$q(f)=\left\langle f,f\right\rangle $ for $f\in R$. For a $p$-isogeny
$f$ of multiplicator $c$, we have 
\begin{equation}
q(f)=d^{2}p^{c}.\label{qduG}
\end{equation}
Let $\ell\neq p$ be a prime. We have an injective morphism: 
\[
R\otimes\mathbb{Z}_{\ell}\hookrightarrow\textrm{Hom}_{\mathbb{Z}_{\ell}}\left(T_{\ell}(A_{0}),T_{\ell}(A_{1})\right).
\]
Each module $T_{\ell}(A_{i})$ carries a pairing $\varepsilon^{\lambda_{i}}:T_{\ell}(A_{i})\times T_{\ell}(A_{i})\rightarrow\mathbb{Z}_{l}(1)$
associated to $\lambda_{i}$. If $f:V_{\ell}(A_{0})\rightarrow V_{\ell}(A_{1})$
is a $\mathbb{Q}_{\ell}$-linear map, we define $f^{*}:V_{\ell}(A_{1})\rightarrow V_{\ell}(A_{0})$
by the formula:
\[
\left\langle fx,y\right\rangle {}^{\lambda_{1}}=\left\langle x,f^{*}y\right\rangle {}^{\lambda_{0}}
\]
and we write $\widehat{q}(f)=d^{2}\textrm{Tr}(f^{*}f)$. If $f\in R$,
the properties of the Weil pairing show that $f^{*}=\lambda_{0}^{-1}\circ f^{\vee}\circ\lambda_{1}$.
This implies $q(f)=\widehat{q}(f)$. The level structures $\overline{\eta_{0}}$,
$\overline{\eta_{1}}$ identify $\textrm{Hom}_{\mathbb{Z}_{\ell}}\left(T_{\ell}(A_{0}),T_{\ell}(A_{1})\right)$
with $\textrm{End}_{\mathbb{Z}_{\ell}}\left(L\otimes\mathbb{Z}_{\ell}\right)$.
Again, we define an involution $*$ of $\textrm{End}_{\mathbb{Q}}\left(V\right)$
by the formula:
\[
\psi(fx,y)=\psi(x,f^{*}y),\quad\forall f\in\textrm{End}_{\mathbb{Q}}\left(V\right),\forall x,y\in V
\]
and we write $q_{0}(f)=d^{2}\textrm{Tr}(ff^{*})$. We get a commutative
diagram

\begin{displaymath}
\xymatrix{
R\otimes\mathbb{Z}_{\ell} \ar[dr]_{q \otimes\mathbb{Z}_{\ell}} \ar@{^{(}->}[r]  & \textrm{Hom}_{\mathbb{Z}_{\ell}}\left(T_{\ell}(A_{0}),T_{\ell}(A_{1})\right)\ar[d]^{\widehat{q}} \ar[r]^(.57){\simeq} & \textrm{End}_{\mathbb{Z}_{\ell}}\left(L\otimes\mathbb{Z}_{\ell}\right)\ar[dl]^{q_0\otimes\mathbb{Z}_{\ell} } \\
  & \mathbb{Z}_{\ell} & }
\end{displaymath}From now on, we assume that $A_{0}$, $A_{1}$ are supersingular.
Then we have an isomorphism

\[
\textrm{Hom}(A_{0},A_{1})\otimes\mathbb{Z}_{\ell}\overset{\sim}{\rightarrow}\textrm{Hom}_{\mathbb{Z}_{\ell}}\left(T_{\ell}(A_{0}),T_{\ell}(A_{1})\right).
\]
The diagram above shows $(R,q)\otimes\mathbb{Z}_{\ell}\simeq(\textrm{End}_{\mathbb{Z}}(L),q_{0})\otimes\mathbb{Z}_{\ell}$
as $\mathbb{Z}_{\ell}$-quadratic spaces. At $p$, we have an isomorphism:

\begin{equation}
\textrm{Hom}(A_{0},A_{1})\otimes\mathbb{Z}_{p}\simeq\textrm{Hom}_{W[F,V]}\left(\mathbb{D}(A_{0}),\mathbb{D}(A_{1})\right)\label{isomp}
\end{equation}
because $A_{0}$, $A_{1}$ are supersingular. The pairings $\left\langle ,\right\rangle _{i}:\mathbb{D}(A_{i})\times\mathbb{D}(A_{i})\rightarrow W$
induce a transformation $f\mapsto f*$ : 
\[
\textrm{Hom}\left(\mathbb{D}(A_{0})\otimes W_{\mathbb{Q}},\mathbb{D}(A_{1})\otimes W_{\mathbb{Q}}\right)\rightarrow\textrm{Hom}\left(\mathbb{D}(A_{1})\otimes W_{\mathbb{Q}},\mathbb{D}(A_{0})\otimes W_{\mathbb{Q}}\right)
\]
Then $q\otimes\mathbb{Z}_{p}$ becomes via (\ref{isomp}) the form
$f\mapsto d^{2}\textrm{Tr}\left(ff^{*}\right)$ on $\textrm{Hom}_{W[F,V]}\left(\mathbb{D}(A_{0}),\mathbb{D}(A_{1})\right)$.
Since the pairings are perfect, the discriminant of $q\otimes\mathbb{Z}_{p}$
is a unit in $\mathbb{Z}_{p}$.

To every $x_{0},x_{1}\in\mathfrak{F}(\overline{\mathbb{F}})$ (such
that there exists an isogeny $h:x_{0}\rightarrow x_{1}$ in $\mathfrak{I}(\overline{\mathbb{F}})$),
we have associated a positive definite $\mathbb{Z}$-quadratic space
$(R,q)$, whose discriminant is independant of $x_{0},x_{1},h$. Let
$\mathcal{L}$ be the set of isomorphism classes of the spaces $(R,q)$.
It is finite since there are only finitely many isomorphism classes
of positive definite $\mathbb{Z}$-quadratic spaces of given discriminant.
For every fixed $C\geq0$, we thus can find $N\geq0$ coprime to $p$
such that:
\[
\forall(R,q)\in\mathcal{L},\:\forall u,v\in R,\:\begin{cases}
u\equiv v\,[NR]\\
q(u)=q(v)=C
\end{cases}\Longrightarrow u=v.
\]
Take $N$ satisfying this condition for $C=d^{2}p^{c}$. Define $K'^{p}=\left\{ g\in K^{p},\:(g-1)(\widehat{L}^{(p)})\subset N\widehat{L}^{(p)}\right\} $
and write $\mathfrak{I}_{K'^{p}}$ the moduli problem for this new
level structure. Let $f,g$ be two isogenies in $\mathfrak{I}_{K'^{p}}(\overline{\mathbb{F}})$
of multiplicator $c$, such that $s(f)=s(g)$ and $t(f)=t(g)$, denoted
respectively $x_{0}$ and $x_{1}$. Then (\ref{qduG}) shows that
$q(f)=q(g)=C$. By definition of $K'^{p}$, we have $f=g$ on $A_{0}[N]$.
There exists $h\in R$ such that $f-g=Nh$, thus $f=g$. This shows
that $(s,t)$ is injective on supersingular $\overline{\mathbb{F}}$-points.

Write $R=\frac{\overline{\mathbb{F}}\left[t\right]}{(t^{2})}$. Let
$f,g$ be two supersingular points in $\mathfrak{I}_{K'^{p}}(R)$
such that $s(f)=s(g)$ and $t(f)=t(g)$. We denote by $f_{0}$, $g_{0}$
the reduced isogenies on $\overline{\mathbb{F}}$. We have $(s,t)(f_{0})=(s,t)(g_{0})$,
thus $f_{0}=g_{0}$, and we deduce $f=g$ by \cite{GZR}, lemma 3.1.
This shows that $(s,t)$ is a closed immersion.
\end{proof}

\section{Congruence relation}

\subsection{A few lemmas}
\begin{thm}
\label{thmfibre}Let $X,Y$ be irreducible schemes of finite type
over a field. Let $f:X\rightarrow Y$ be a dominant morphism. Then
there is an open dense subset $U\subset Y$ such that for all $y\in U$,
we have:\textup{
\[
\textrm{dim}(f^{-1}(y))=\textrm{dim}(X)-\textrm{dim}(Y).
\]
}\end{thm}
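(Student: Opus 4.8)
The plan is to reduce the statement to a question about an inclusion of finitely generated integral domains and then apply generic Noether normalization. First I would shrink $Y$ to a dense affine open; since this changes neither $\textrm{dim}(Y)$ nor (because $f$ is dominant and $X$ is irreducible) $\textrm{dim}(X)$, and it suffices to find $U$ inside the smaller $Y$. Next, cover the preimage of this open set by finitely many affine opens $W_{j}\subset X$; each $W_{j}$ is dense in $X$, hence $f|_{W_{j}}$ is again dominant, with $\textrm{dim}(W_{j})=\textrm{dim}(X)$. Since the fibres of $f$ over the chosen open set are covered by the fibres of the $f|_{W_{j}}$, and dimension of a scheme is the max over a finite cover, it is enough to prove the affine case and then intersect the resulting opens $U_{j}$. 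So we may assume $Y=\textrm{Spec}(A)$, $X=\textrm{Spec}(B)$ with $A\hookrightarrow B$ an injection of domains of finite type over the base field $k$, and we must show $\textrm{dim}(B\otimes_{A}\kappa(\mathfrak{p}))=d$ for $\mathfrak{p}$ in a dense open of $\textrm{Spec}(A)$, where $d=\textrm{dim}(X)-\textrm{dim}(Y)=\textrm{trdeg}_{K(Y)}K(X)$ (the last equality because $X,Y$ are integral of finite type over $k$).

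Then I would invoke generic Noether normalization over the domain $A$: after inverting a single nonzero $a\in A$, there exist $b_{1},\dots,b_{d}\in B_{a}$, algebraically independent over $K(Y)=\textrm{Frac}(A_{a})$, such that $B_{a}$ is module-finite over the polynomial ring $P=A_{a}[b_{1},\dots,b_{d}]$. (One obtains this by applying ordinary Noether normalization to the finitely generated $K(Y)$-algebra $B\otimes_{A}K(Y)$, which has dimension $d$, and then clearing the finitely many denominators that occur in the chosen elements and their integral equations.) Because the $b_{i}$ are independent over $\textrm{Frac}(A_{a})$, the map $P\hookrightarrow B_{a}$ is injective; being also integral, it satisfies lying-over, so $\textrm{Spec}(B_{a})\to\textrm{Spec}(P)=\mathbb{A}^{d}_{Y_{a}}$ is finite and surjective.

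Finally I would conclude by base change. Fix $\mathfrak{p}\in D(a)$. The fibre $X_{\mathfrak{p}}=\textrm{Spec}(B_{a}\otimes_{A_{a}}\kappa(\mathfrak{p}))$ maps to $\textrm{Spec}(P\otimes_{A_{a}}\kappa(\mathfrak{p}))=\mathbb{A}^{d}_{\kappa(\mathfrak{p})}$, and this map is still finite (finiteness is stable under base change), so $\textrm{dim}(X_{\mathfrak{p}})\leq d$. It is also still surjective: surjectivity is preserved under arbitrary base change, since a tensor product of a residue field with a field extension of a common subfield is nonzero. A surjective morphism onto $\mathbb{A}^{d}_{\kappa(\mathfrak{p})}$ from a $\kappa(\mathfrak{p})$-scheme of finite type has dimension $\geq d$: some irreducible component dominates $\mathbb{A}^{d}_{\kappa(\mathfrak{p})}$, and a dominant morphism of irreducible finite-type $\kappa(\mathfrak{p})$-schemes does not decrease dimension. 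Hence $\textrm{dim}(X_{\mathfrak{p}})=d$, and we take $U=D(a)$ (intersected with the dense affine open of $Y$ from the first step).

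The main obstacle is making sure that the two inequalities $\textrm{dim}(X_{\mathfrak{p}})\leq d$ and $\textrm{dim}(X_{\mathfrak{p}})\geq d$ survive the passage to the residue field at $\mathfrak{p}$. Finiteness survives trivially, and the point is to notice that \emph{surjectivity} of the Spec map of a finite injective extension also survives; one must \emph{not} try to use injectivity of $P\to B_{a}$ after tensoring (it can fail), only the surjectivity of the associated map of spectra. The other delicate point is the bookkeeping in generic Noether normalization — arranging that one nonzero element $a\in A$ simultaneously clears all denominators in a chosen transcendence basis and the associated integral equations — but this is routine. Alternatively, one could deduce the lower bound from generic flatness together with the dimension formula for flat finite-type morphisms, or from Krull's Hauptidealsatz, but the argument above keeps both bounds on the same dense open $D(a)$ with minimal machinery.
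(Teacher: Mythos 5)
Your argument is correct, but it takes a genuinely different route from the paper. The paper disposes of the statement in two lines by citation: after reducing to the reduced case, it invokes generic flatness (EGA IV, 6.9.1) to shrink $Y$ so that $f$ becomes flat, and then applies the dimension formula for flat morphisms of finite type (EGA IV, 13.1.1 and 14.2.4). You instead give a self-contained proof: reduce to an inclusion of finitely generated domains $A\hookrightarrow B$, perform Noether normalization of $B\otimes_{A}K(A)$ over the function field, clear denominators to get $B_{a}$ finite over a polynomial ring $A_{a}[b_{1},\dots,b_{d}]$, and read off both inequalities for the fibre dimension from finiteness (upper bound) and from surjectivity of the finite integral extension, which persists under base change to $\kappa(\mathfrak{p})$ (lower bound). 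Your version costs more writing but uses only Noether normalization and the lying-over theorem, and you correctly flag the one real trap — that injectivity of $P\to B_{a}$ need not survive the tensor with $\kappa(\mathfrak{p})$, so one must carry surjectivity of the map of spectra rather than injectivity of rings. Two small housekeeping points: you should say explicitly at the outset that one may replace $X,Y$ by their reductions (dimension of fibres is insensitive to nilpotents), since otherwise $A$ and $B$ need not be domains; and in the affine reduction, note that each fibre $f^{-1}(y)$ over the chosen open is the finite union of the $(f|_{W_{j}})^{-1}(y)$, so its dimension is the maximum of theirs, which is what lets you intersect the finitely many opens $U_{j}$ at the end. Both are routine.
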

\begin{proof}
We may assume that $X,Y$ are reduced. Using \cite{EGA4} théorème
6.9.1, there exists an open dense subset $U\subset Y$ such that $f:f^{-1}(U)\rightarrow U$
is flat. Then use lemme 13.1.1 and corollaire 14.2.4 of \textsl{ibid}.\end{proof}
\begin{cor}
\label{dimX}Let $X,Y$ be schemes of finite type over a field. Let
$f:X\rightarrow Y$ be a dominant morphism and $r\geq0$. Assume that
for all $y\in Y$, the dimension of $f^{-1}(y)$ is $r$. Then \textup{$\textrm{dim}(X)-\textrm{dim}(Y)=r$}.\end{cor}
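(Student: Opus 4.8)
The plan is to deduce the statement from Theorem~\ref{thmfibre} by passing to irreducible components. First I would record two reductions: since $\dim f^{-1}(y)=r\geq 0$ for every $y\in Y$, each fibre is nonempty, so $f$ is surjective; and replacing $X,Y$ by their reductions changes none of the dimensions or fibres in sight, so I may assume $X,Y$ reduced. Write $X=X_1\cup\cdots\cup X_a$ and $Y=Y_1\cup\cdots\cup Y_b$ for the decompositions into irreducible components, and for each $i$ set $Y^{(i)}=\overline{f(X_i)}$, an irreducible closed subset of $Y$. Then $f|_{X_i}\colon X_i\to Y^{(i)}$ is a dominant morphism of irreducible finite-type schemes over the base field, so Theorem~\ref{thmfibre} furnishes a dense open $U_i\subseteq Y^{(i)}$ over each of whose points the fibre of $f|_{X_i}$ has dimension exactly $\dim X_i-\dim Y^{(i)}$ (in particular this number is $\geq 0$, so those fibres are nonempty).

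For the inequality $\dim X-\dim Y\leq r$, I would pick a component $X_{i_0}$ with $\dim X_{i_0}=\dim X$ and any point $y\in U_{i_0}$. Since $(f|_{X_{i_0}})^{-1}(y)\subseteq f^{-1}(y)$, we get $\dim X_{i_0}-\dim Y^{(i_0)}\leq\dim f^{-1}(y)=r$, and as $Y^{(i_0)}\subseteq Y$ this yields $\dim X-\dim Y=\dim X_{i_0}-\dim Y\leq\dim X_{i_0}-\dim Y^{(i_0)}\leq r$.

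The reverse inequality is the delicate point and the main obstacle: I must produce a single component of $X$ of dimension at least $\dim Y+r$, which forces me to inspect the fibre over a sufficiently general point of a top-dimensional component of $Y$. Fix $Y_j$ with $\dim Y_j=\dim Y$. Since $f$ is surjective, the irreducible set $Y_j$ is covered by the finitely many closed subsets $\overline{f(X_i)}\cap Y_j$, so $Y_j\subseteq\overline{f(X_i)}$ for some $i$; as $\overline{f(X_i)}$ is irreducible and contains the maximal irreducible subset $Y_j$, in fact $\overline{f(X_i)}=Y_j$. Let $S=\{i:\overline{f(X_i)}=Y_j\}\neq\emptyset$; for $i\notin S$ one has $\overline{f(X_i)}\cap Y_j\subsetneq Y_j$, so $Z:=\bigcup_{i\notin S}\bigl(\overline{f(X_i)}\cap Y_j\bigr)$ is a proper closed subset of $Y_j$. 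Choose $y\in\bigl(\bigcap_{i\in S}U_i\bigr)\setminus Z$, which is nonempty, being a nonempty open set minus a proper closed set in the irreducible scheme $Y_j$. For this $y$, no component $X_i$ with $i\notin S$ meets $f^{-1}(y)$, while for $i\in S$ the fibre $(f|_{X_i})^{-1}(y)$ is nonempty of dimension $\dim X_i-\dim Y_j$; hence $r=\dim f^{-1}(y)=\max_{i\in S}\bigl(\dim X_i-\dim Y_j\bigr)$, so some component $X_{i^{*}}$ satisfies $\dim X_{i^{*}}=r+\dim Y_j=r+\dim Y$, and therefore $\dim X\geq r+\dim Y$. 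Combining the two inequalities gives $\dim X-\dim Y=r$. The one thing to be careful about is precisely this genericity step: the chosen $y$ must be general enough to annihilate every ``parasitic'' component $X_i$ with $i\notin S$ from the fibre $f^{-1}(y)$, so that its dimension is literally read off from the components of $X$ dominating $Y_j$.
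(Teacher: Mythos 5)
Your argument is correct and complete; the paper itself dismisses this corollary as ``a simple exercise,'' and your deduction from Theorem~\ref{thmfibre} by passing to irreducible components is exactly the intended route. In particular you handle the one genuinely delicate point — choosing $y$ in a top-dimensional component $Y_j$ generically enough that only the components of $X$ dominating $Y_j$ contribute to $f^{-1}(y)$ — which is what makes the lower bound $\dim X\geq\dim Y+r$ go through.
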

\begin{proof}
This is a simple exercise.\end{proof}
\begin{lem}
Let $C\subset p\textrm{-}\mathscr{I}sog^{(c)}\times\overline{\mathbb{F}}$
be a supersingular irreducible component. Then $C_{s}:=s(C)$ and
$C_{t}:=t(C)$ are irreducible components of $\overline{Sh_{K}}^{ss}\times\overline{\mathbb{F}}$.\end{lem}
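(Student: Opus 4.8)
The statement asserts that for a supersingular irreducible component $C \subset p\text{-}\mathscr{I}sog^{(c)} \times \overline{\mathbb{F}}$, the images $C_s = s(C)$ and $C_t = t(C)$ are irreducible components of $\overline{Sh_K}^{ss} \times \overline{\mathbb{F}}$. Since $C$ is irreducible and $s, t$ are continuous, $C_s$ and $C_t$ are irreducible closed subsets of $\overline{Sh_K}^{ss} \times \overline{\mathbb{F}}$ (they are closed because the restrictions of $s,t$ to $p\text{-}\mathscr{I}sog^{(c),ss}$ are proper, being proper over $Sh_K$ by the cited result of \cite{BM}, 4.2.1, and $C$ is closed in the supersingular locus). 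As $\overline{Sh_K}^{ss} \times \overline{\mathbb{F}}$ is equidimensional of dimension $\frac{n-1}{2}$, it suffices to show $\dim(C_s) = \frac{n-1}{2}$ (and likewise for $C_t$); an irreducible closed subset of maximal dimension in an equidimensional scheme is automatically a component.

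\textbf{Dimension count.} The key point is to compute $\dim(C_s)$ via the fibers of $s$ restricted to $C$. By Proposition \ref{comppisog}, $\dim(C) = n-1$. Consider the dominant morphism $s|_C : C \to C_s$. By Corollary \ref{dimX} (or rather Theorem \ref{thmfibre}), it is enough to control the fiber dimensions: if every fiber of $s|_C$ has dimension $r$, then $\dim(C) - \dim(C_s) = r$, so $\dim(C_s) = (n-1) - r$. We know from the Corollary to the proposition in Section 4.2 that for any $x \in \overline{Sh_K}^{ss}(\overline{\mathbb{F}})$ and any even $c \geq 2$, the fiber $t_c^{-1}(x)$ has dimension exactly $\frac{n-1}{2}$; by the symmetry of the situation (using Verschiebung, or simply the analogous argument with $s$ in place of $t$), the same holds for $s_c^{-1}(x)$. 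Intersecting with $C$ can only decrease fiber dimensions, so every nonempty fiber of $s|_C$ has dimension $\leq \frac{n-1}{2}$. The reverse inequality is where one must be careful: a generic fiber of $s|_C$ has dimension $\dim(C) - \dim(C_s) = (n-1) - \dim(C_s)$, and since $\dim(C_s) \leq \frac{n-1}{2}$ we get that the generic fiber of $s|_C$ has dimension $\geq \frac{n-1}{2}$; combined with the upper bound, the generic fiber has dimension exactly $\frac{n-1}{2}$, whence $\dim(C_s) = \frac{n-1}{2}$. For the case $c = 0$, the morphism $s$ is an isomorphism onto $\overline{Sh_K}$ (the identity isogenies), so $C_s = C$ is itself a supersingular component and there is nothing to prove; one should note this case separately.

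\textbf{Main obstacle.} The delicate step is the lower bound on $\dim(C_s)$, i.e. ensuring that $C$ genuinely dominates a \emph{full-dimensional} piece of the supersingular locus rather than collapsing onto a smaller stratum. The clean way to see this is the inequality argument above: from $\dim C = n-1$, any image has codimension at most the maximal fiber dimension $\frac{n-1}{2}$, forcing $\dim(C_s) \geq \frac{n-1}{2}$; combined with $\dim(C_s) \leq \dim(\overline{Sh_K}^{ss} \times \overline{\mathbb{F}}) = \frac{n-1}{2}$ we get equality, hence $C_s$ is a component. The analogue for $t$ is identical. One small technical point worth flagging: one must know $s|_C, t|_C$ really land in the supersingular locus — but this is immediate from the definition of a supersingular component (all its points are supersingular, so their sources and targets are too) together with the fact that the Newton stratification is preserved suitably, or more simply from the properness of $s,t$ over $Sh_K$ which makes the set-theoretic images closed.
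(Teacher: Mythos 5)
Your argument is essentially the same as the paper's: both use $\dim(C) = n-1$ (Proposition \ref{comppisog}), the bound $\dim(\text{fiber}) \le \frac{n-1}{2}$ on fibers of $s$ and $t$ over the supersingular locus, the resulting lower bound $\dim(C_s), \dim(C_t) \ge \frac{n-1}{2}$, and the equidimensionality of $\overline{Sh_K}^{ss}$ from \cite{V2} to conclude. The paper compresses this into two sentences; your write-up is more explicit about the fiber-dimension inequality and the symmetry between $s$ and $t$, but there is no substantive difference in method.
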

\begin{proof}
They are irreducible closed subsets of dimension $\geq\frac{n-1}{2}$
since the fibres have dimension $\leq\frac{n-1}{2}$. But $\overline{Sh_{K}}^{ss}$
is equidimensional of dimension $\frac{n-1}{2}$ (\cite{V2} theorem
5.2), so the result follows.\end{proof}
\begin{prop}
\label{compunik}Let $C_{1},C_{2}\subset p\textrm{-}\mathscr{I}sog^{(c)}\times\overline{\mathbb{F}}$
be supersingular irreducible components. Assume that the map $(s,t)$
is a closed immersion on $p\textrm{-}\mathscr{I}sog_{K}^{(c),ss}$.
Assume further that $C_{1,s}=C_{2,s}$ and $C_{1,t}=C_{2,t}$. Then
$C_{1}=C_{2}$.\end{prop}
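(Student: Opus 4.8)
The plan is to turn everything into a dimension count inside a product of two irreducible varieties. Recall from Proposition \ref{comppisog} that a supersingular irreducible component of $p\textrm{-}\mathscr{I}sog^{(c)}\times\overline{\mathbb{F}}$ has dimension $n-1$, so $\dim C_1=\dim C_2=n-1$. By the lemma preceding this proposition, $C_{i,s}=s(C_i)$ and $C_{i,t}=t(C_i)$ are irreducible components of $\overline{Sh_K}^{ss}\times\overline{\mathbb{F}}$, hence each has dimension $\frac{n-1}{2}$ (the supersingular locus being equidimensional of that dimension). Since $\overline{\mathbb{F}}$ is algebraically closed, the fibre product $C_{1,s}\times_{\overline{\mathbb{F}}}C_{1,t}$ is an irreducible scheme, of dimension $\frac{n-1}{2}+\frac{n-1}{2}=n-1$.

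First I would note that $C_1$ is entirely contained in $p\textrm{-}\mathscr{I}sog^{(c),ss}_{K}\times\kappa(\mathcal{O}_{E_p})$, all of its points being supersingular by definition of a supersingular component; the same holds for $C_2$. Hence the hypothesis that $(s,t)$ is a closed immersion on the supersingular locus applies: $(s,t)$ restricts to an isomorphism of $C_1$ onto its image $(s,t)(C_1)\subset\overline{Sh_K}^{ss}\times\overline{Sh_K}^{ss}$, a closed irreducible subset of dimension $n-1$, and likewise for $C_2$. By construction $(s,t)(C_1)$ maps onto $s(C_1)=C_{1,s}$ under the first projection and onto $t(C_1)=C_{1,t}$ under the second, so $(s,t)(C_1)\subseteq C_{1,s}\times_{\overline{\mathbb{F}}}C_{1,t}$. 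A closed irreducible subset of an irreducible scheme that has the same dimension as the ambient scheme must equal it, so $(s,t)(C_1)=C_{1,s}\times_{\overline{\mathbb{F}}}C_{1,t}$, and similarly $(s,t)(C_2)=C_{2,s}\times_{\overline{\mathbb{F}}}C_{2,t}$.

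Finally, the hypotheses $C_{1,s}=C_{2,s}$ and $C_{1,t}=C_{2,t}$ give $(s,t)(C_1)=(s,t)(C_2)$; since $(s,t)$ is injective on the supersingular locus, this forces $C_1=C_2$. I do not anticipate a serious obstacle: the only point needing a touch of care is the inclusion $(s,t)(C_i)\subseteq C_{i,s}\times_{\overline{\mathbb{F}}}C_{i,t}$ together with the irreducibility and dimension of this product, all of which rest on working over the algebraically closed field $\overline{\mathbb{F}}$ and on the dimension computations of Proposition \ref{comppisog} and of $\overline{Sh_K}^{ss}$ recalled above.
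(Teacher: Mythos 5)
Your proof is correct and follows essentially the same route as the paper: both arguments use that $(s,t)$ restricts to a closed immersion of $C_1$ into the irreducible, $(n-1)$-dimensional product $C_{1,s}\times C_{1,t}$, conclude by the dimension count that the image is the whole product, and then invoke injectivity of $(s,t)$. Your explicit remark that irreducibility of the product relies on $\overline{\mathbb{F}}$ being algebraically closed is a welcome touch of care that the paper leaves implicit.
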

\begin{proof}
The map $(s,t)$ induces a closed immersion $C_{1}\hookrightarrow C_{1,s}\times C_{1,t}$.
Since $C_{1,s}$ and $C_{1,t}$ are irreducible components of $\overline{Sh_{K}}^{ss}\times\overline{\mathbb{F}}$,
they are smooth of dimension $\frac{n-1}{2}$. The product $C_{1,s}\times C_{1,t}$
is thus irreducible of dimension $n-1$, so $(s,t)$ defines an isomorphism
$C_{1}\overset{\sim}{\rightarrow}C_{1,s}\times C_{1,t}$. The same
holds for $C_{2}$, and we deduce the result.
\end{proof}

\subsection{The Frobenius action}

Let $\mathcal{F}$ be the Frobenius map on $\overline{Sh_{K}}$ and
$p\textrm{-}\mathscr{I}sog\times\kappa(\mathcal{O}_{E_{p}})$. If
$C$ is a cycle, write $\left|C\right|$ for its support.
\begin{prop}
\label{pCFC}Let $\widetilde{C}$ be an irreducible component of $\overline{Sh_{K}}^{ss}\times\overline{\mathbb{F}}$.
We have:
\[
\mathcal{F}(\widetilde{C})=\left\langle p\right\rangle \widetilde{C}.
\]
\end{prop}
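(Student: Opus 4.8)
The plan is to work with Dieudonné modules and compare the two supersingular points on either side of the claimed equality. Let $\widetilde{C}$ be an irreducible component of $\overline{Sh_K}^{ss}\times\overline{\mathbb{F}}$; by the uniformization theorem it corresponds, after fixing a base supersingular tuple $\underline{A}'$, to some irreducible component $\mathcal{N}'_L\times g_j$ in $I(\mathbb{Q})\backslash \mathcal{N}'_{red}\times G(\mathbb{A}_f^p)/K^p$, where $L\in\mathcal{L}_i(n)$ for the appropriate (even) multiplicator $i$. First I would reduce to the level of points: since $\widetilde{C}$ is reduced and $\mathcal{F}$ is a homeomorphism, it suffices to show that $\mathcal{F}$ carries the generic point of $\widetilde{C}$ into $|\langle p\rangle\widetilde{C}|$ and conversely; equivalently, for a dense set of $\overline{\mathbb{F}}$-points $x=\underline{A}\in\widetilde{C}$ of type $n$ (the smooth locus, which is dense), one has $\mathcal{F}(x)=\langle p\rangle x$ in $\overline{Sh_K}^{ss}$. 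Concretely, $\mathcal{F}(\underline{A})=\underline{A}^{(p^2)}$, whose Dieudonné module is $M^{(p^2)}\cong M$ as abstract Dieudonné modules but with the quasi-isogeny $\rho_X$ to $\underline{X}'$ twisted by the $p^2$-power Frobenius; while $\langle p\rangle\underline{A}$ is the same abelian variety with level structure multiplied by $p$, hence the same Dieudonné module $M$ but with $\rho_X$ replaced by $p\rho_X$ on the prime-to-$p$ component — but wait, one must be careful: $\langle p\rangle$ only affects the away-from-$p$ level structure, so I should rather track the effect on the pair $(\underline{X},\rho_X)\in\mathcal{N}'(\overline{\mathbb{F}})$ together with the $G(\mathbb{A}_f^p)$-component.

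The key computation is therefore the following. On the $p$-divisible group side, $\underline{A}^{(p^2)}$ has Dieudonné module $M$ but the structural quasi-isogeny to $\underline{X}'$ acquires an extra factor of $p$ in its multiplicator: indeed the relative Frobenius $F_{\underline{A}}\colon\underline{A}\to\underline{A}^{(p^2)}$ has degree $p^{2n}$ and corresponds under $\mathbb{D}$ to $F^2=p\tau$ acting on $N'$, so $\rho_{X^{(p^2)}}=\rho_X\circ (p\tau)^{-1}$ has multiplicator shifted by $2$ relative to that of $\rho_X$... I should instead phrase this as: the point $\mathcal{F}(\underline A)$ of $\mathcal{N}'\times G(\mathbb{A}_f^p)$ corresponding to $\widetilde C$-members is $(\underline X, F^2\rho_X)\times g_j$, whereas $\langle p\rangle\underline A$ corresponds to $(\underline X,\rho_X)\times (pg_j)$. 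Since $F^2=p\cdot\tau$ and $\tau=p^{-1}F^2$ is (by its very definition, using $M=M^{\vee}$ up to a power of $p$, hence $FM_e=M_{\bar e}$ etc.) an \emph{isomorphism} of the pair $(\underline X,\rho_X)$ onto itself at the level of $\mathcal{N}'$ — it lies in $J(\mathbb{Q}_p)$, in fact in the image of $I(\mathbb{Q})$ or is absorbed by the $I(\mathbb{Q})$-action — the two points coincide in the quotient $I(\mathbb{Q})\backslash \mathcal{N}'_{red}\times G(\mathbb{A}_f^p)/K^p$ up to the scalar $p\in\mathbb{A}_f^p$ acting on the level structure, which is exactly what $\langle p\rangle$ records. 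Spelling this out using that $p\in E^{\times}$ acts on $N'$ as a self-quasi-isogeny with multiplicator $2$ and lies in $I(\mathbb{Q})$ gives $\mathcal{F}(\underline A)=\langle p\rangle\underline A$.

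I would then upgrade from $\overline{\mathbb{F}}$-points to schemes: $\mathcal{F}(\widetilde C)$ and $\langle p\rangle\widetilde C$ are both reduced irreducible closed subschemes of $\overline{Sh_K}^{ss}\times\overline{\mathbb{F}}$ (the image of an irreducible component under the finite map $\mathcal{F}$, resp. under the automorphism $\langle p\rangle$, which is visibly an isomorphism onto its image), they agree on a dense set of points, hence they are equal as closed subschemes. Note $\langle p\rangle\widetilde C$ is again an irreducible component by Proposition \ref{comppisog}-type equidimensionality ($\overline{Sh_K}^{ss}$ is equidimensional of dimension $\tfrac{n-1}{2}$ and $\langle p\rangle$, $\mathcal{F}$ preserve this), so no dimension drop occurs. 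The main obstacle I anticipate is bookkeeping the precise interplay between the $p$-power Frobenius twist, the operator $\tau=p^{-1}F^2$, and the away-from-$p$ scaling $\langle p\rangle$ — i.e. making rigorous the claim that multiplying the prime-to-$p$ level structure by $p$ has the same effect on the point of $\overline{Sh_K}^{ss}$ as twisting the $p$-divisible group structure by relative Frobenius — which requires using that $p\in E^{\times}$ defines an element of $I(\mathbb{Q})$ acting diagonally on $\mathcal{N}'\times G(\mathbb{A}_f^p)$, together with the fact that $F^2$ and $p\cdot\mathrm{id}$ differ by the unit $\tau\in J(\mathbb{Q}_p)$ which is absorbed in the $\mathcal{N}'_{red}$-component of the quotient. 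Everything else is a routine descent from points to reduced schemes.
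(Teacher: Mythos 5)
The central claim in your argument — that for a dense set of type-$n$ points $x\in\widetilde{C}(\overline{\mathbb{F}})$ one actually has the \emph{pointwise} equality $\mathcal{F}(x)=\langle p\rangle x$ in $\overline{Sh_K}^{ss}\times\overline{\mathbb{F}}$ — is stronger than what the proposition asserts, and I do not believe it is true. Note that $\mathcal{F}(x)$ has underlying abelian variety $A^{(p^2)}$ while $\langle p\rangle x$ has underlying abelian variety $A$; for these to be the same point of $Sh_K(\overline{\mathbb{F}})$ one needs a prime-to-$p$ isogeny $A\to A^{(p^2)}$ compatible with all the extra structure including the away-from-$p$ level structure, and there is no reason for such an isogeny to exist. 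The paper proves only the component-level statement: it produces the two isogenies $p\colon\langle p^{-1}\rangle x\to x$ and $V\colon\langle p^{-2}\rangle\mathcal{F}x\to x$, observes that both lie in the \emph{unique} top-dimensional irreducible component $C$ of the fibre $t_2^{-1}(x)$ (Remark~\ref{t2}, since $V^2M\subset p\Lambda^+(M)$ and $pM\subset p\Lambda^+(M)$), and then concludes that the two sources $\langle p^{-2}\rangle\mathcal{F}x$ and $\langle p^{-1}\rangle x$ — which are in general \emph{distinct} points — both land in $s(C)$, a single irreducible component. Because a type-$n$ point lies in only one component of $\overline{Sh_K}^{ss}\times\overline{\mathbb{F}}$, this forces $\langle p^{-2}\rangle\mathcal{F}(\widetilde{C})=\langle p^{-1}\rangle\widetilde{C}$. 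Your proposed reduction to pointwise equality is not equivalent to, and overshoots, this.

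A second concrete problem is your treatment of $\tau=p^{-1}F^2$. You assert that $\tau$ lies in $J(\mathbb{Q}_p)$, or is ``absorbed by the $I(\mathbb{Q})$-action.'' But $\tau$ is $\sigma^2$-semilinear, not $W_{\mathbb{Q}}$-linear, so it is not an element of $J(\mathbb{Q}_p)$; it is rather the descent datum defining the $\mathbb{F}_{p^2}$-structure on $\mathcal{N}'$. And even if one had a genuine element of $I(\mathbb{Q})$ at one's disposal, $I(\mathbb{Q})$ acts \emph{diagonally} on $\mathcal{N}'\times G(\mathbb{A}_f^p)$, so one cannot use it to modify the $\mathcal{N}'$-component alone while leaving $g_j$ untouched, which is precisely what your comparison of $(\underline{X},F^2\rho_X)\times g_j$ with $(\underline{X},\rho_X)\times(pg_j)$ requires. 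To make your route work you would need a careful bookkeeping of how Frobenius on $\overline{Sh_K}^{ss}\times\overline{\mathbb{F}}$ interacts with the Weil descent datum on $\mathcal{N}'$ and with the chosen $g_j$; this is exactly the subtle point you flag at the end as ``bookkeeping,'' but it is the whole content of the proposition, not a routine matter, and as stated the argument does not close.
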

\begin{proof}
Let $x\in\overline{Sh_{K}}^{ss}(\overline{\mathbb{F}})$ be a point
of type $n$ whose image lies in $\widetilde{C}$. Write $M=\mathbb{D}(x)$.
Then $t_{2}^{-1}(x)$ has a unique irreducible component $C$ of dimension
$\frac{n-1}{2}$ (see remark \ref{t2}). More precisely, points $y\in p\textrm{-}\mathscr{I}sog^{(2)}(\overline{\mathbb{F}})$
whose image lie in $C$ correspond to quasi-unitary Dieudonné modules
$M'$ of signature $(n-1,1)$ satisfying $p^{2}M'^{\vee}=M'$ and
$M'\subset p\Lambda^{+}(M)$. Clearly the isogenies $p:\left\langle p^{-1}\right\rangle x\rightarrow x$
and $V:\left\langle p^{-2}\right\rangle Fx\rightarrow x$ belong to
$t_{2}^{-1}(x)$. They lie in $C$ because $V^{2}M\subset p\Lambda^{+}(M)$
and $pM\subset p\Lambda^{+}(M)$. Thus $\left\langle p^{-2}\right\rangle \mathcal{F}x$
and $\left\langle p^{-1}\right\rangle x$ lie in $s(C)$, which is
an irreducible component of $\overline{Sh_{K}}^{ss}\times\overline{\mathbb{F}}$.
Observe that $\left\langle p^{-2}\right\rangle \mathcal{F}x\in\left\langle p^{-2}\right\rangle \mathcal{F}(\widetilde{C})$
and $\left\langle p^{-1}\right\rangle x\in\left\langle p^{-1}\right\rangle \widetilde{C}$.
We deduce $\left\langle p^{-2}\right\rangle \mathcal{F}(\widetilde{C})=s(C)=\left\langle p^{-1}\right\rangle \widetilde{C}$
because a point of type $n$ lies in a unique irreducible component
of $\overline{Sh_{K}}^{ss}\times\overline{\mathbb{F}}$.\end{proof}
\begin{prop}
\label{FCCF}Let $C\subset p\textrm{-}\mathscr{I}sog^{(c)}\times\overline{\mathbb{F}}$
be an irreducible component. Then
\[
F\cdot\mathcal{F}(C)=C\cdot F
\]
\end{prop}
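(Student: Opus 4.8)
The statement $F\cdot\mathcal F(C)=C\cdot F$ is an identity of cycles in $\mathbb Q[p\textrm{-}\mathscr Isog\times\overline{\mathbb F}]$. I would first unwind what each side means at the level of the composition map $c$. The product $F\cdot\mathcal F(C)$ is $c_*(F\times_{t,s}\mathcal F(C))$: pairs of composable isogenies whose first factor is a Frobenius isogeny $F_{\underline A}:\underline A\to\underline A^{(p^2)}$ and whose second factor runs through $\mathcal F(C)$, which consists of the isogenies $g^{(p^2)}:\underline A^{(p^2)}\to\underline B^{(p^2)}$ for $g:\underline A\to\underline B$ in $C$ (taking level structures into account as in the definitions of section 3.1). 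The composite is then $g^{(p^2)}\circ F_{\underline A}$. On the other side, $C\cdot F=c_*(C\times_{t,s}F)$ consists of composites $F_{\underline B}\circ g$ for $g:\underline A\to\underline B$ in $C$. The key observation is the functoriality of Frobenius: for any isogeny $g:\underline A\to\underline B$ one has $F_{\underline B}\circ g=g^{(p^2)}\circ F_{\underline A}$ as maps $\underline A\to\underline B^{(p^2)}$ (both are the $p^2$-power relative Frobenius of $g$ over the base). Hence the two composites agree isogeny-by-isogeny.

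\medskip

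The remaining work is to promote this pointwise equality to an equality of cycles, i.e.\ to check that the two maps $F\times_{t,s}\mathcal F(C)\to p\textrm{-}\mathscr Isog\times\overline{\mathbb F}$ and $C\times_{t,s}F\to p\textrm{-}\mathscr Isog\times\overline{\mathbb F}$ have the same scheme-theoretic (or at least cycle-theoretic) image with the same multiplicities. For this I would exhibit an isomorphism $C\times_{t,s}F\xrightarrow{\ \sim\ }F\times_{t,s}\mathcal F(C)$ commuting with the two maps to $p\textrm{-}\mathscr Isog\times\overline{\mathbb F}$. Concretely: $C\times_{t,s}F$ is just $C$ itself, since $F$ is a section of $s$, so a point of $C\times_{t,s}F$ over $g:\underline A\to\underline B$ is the pair $(g,F_{\underline B})$; similarly $F\times_{t,s}\mathcal F(C)$ is $\mathcal F(C)\cong C$ via $g\mapsto g^{(p^2)}$ (Frobenius twist is an isomorphism on the base scheme $\overline{\mathbb F}$, compatible with all the PEL and level data, as recorded in 3.1), and a point of it over $g^{(p^2)}$ is $(F_{\underline A},g^{(p^2)})$. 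The assignment $g\mapsto$ (the twist) is an isomorphism of $\overline{\mathbb F}$-schemes $C\xrightarrow{\sim}\mathcal F(C)$, and under the two identifications above it matches $C\times_{t,s}F$ with $F\times_{t,s}\mathcal F(C)$; postcomposing with $c$ gives on the first $[g]\mapsto[F_{\underline B}\circ g]$ and on the second $[g]\mapsto[g^{(p^2)}\circ F_{\underline A}]$, which coincide by the functoriality of relative Frobenius. Therefore $c_*$ of the two fibre products are equal cycles, and since both $C_1:=C\times_{t,s}F$ and $C_2:=F\times_{t,s}\mathcal F(C)$ are irreducible of the same dimension as $C$ and $c$ restricted to each is generically injective (or an isomorphism onto its image, using that a composite $g^{(p^2)}\circ F_{\underline A}$ of multiplicator $c+2$ determines $g$), the pushforwards carry multiplicity one on each side.

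\medskip

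The main obstacle I anticipate is the bookkeeping of level structures: the Frobenius section $F$ lands in $p\textrm{-}\mathscr Isog^{(2)}$ and sends $\underline A$ to $F_{\underline A}:\underline A\to\underline A^{(p^2)}$ where the target carries the twisted level structure $\overline\eta^{(p^2)}$, while the identity $F_{\underline B}\circ g=g^{(p^2)}\circ F_{\underline A}$ must be checked to be compatible with, and to induce the correct morphism of, level structures and polarizations on both ends (in particular the multiplicators add up correctly: $c+2$ on both sides). This is exactly the content of the paragraph in 3.1 explaining that $\overline\eta^{(p^2)}$ is compatible with $\overline\eta$ through $F_A$, together with the naturality of $\eta\mapsto\eta^{(p^2)}$; once that is spelled out the identity of cycles is formal. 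A secondary subtlety is that $\mathcal F$ acting on $p\textrm{-}\mathscr Isog\times\kappa(\mathcal O_{E_p})$ must be identified with the operation $g\mapsto g^{(p^2)}$ on moduli — this is the definition of the Frobenius map on the moduli space and is compatible with the Frobenius $\mathcal F$ on $\overline{Sh_K}$ used in Proposition~\ref{pCFC}, so no new input is needed.
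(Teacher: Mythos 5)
Your argument is correct and coincides with the paper's: both reduce the cycle identity to the functoriality $F_{\underline B}\circ g = g^{(p^2)}\circ F_{\underline A}$ of relative Frobenius, and both exhibit compatible degree-one identifications of $C\times_{t,s}F$ and $F\times_{t,s}\mathcal F(C)$ with $C$ itself commuting with the composition map $c$, which forces the two pushforwards to coincide. The closing aside about generic injectivity of $c$ and ``multiplicity one'' is superfluous --- the isomorphism of the two fibre products over the common image $X$ already gives $\deg(c_1)=\deg(c_2)$ without any injectivity claim --- and would need further justification if it were load-bearing.
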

\begin{proof}
If $\underline{A}_{1}\overset{f}{\rightarrow}\underline{A}_{0}$ is
an $\overline{\mathbb{F}}$-valued point of $p\textrm{-}\mathscr{I}sog^{(c)}\times\overline{\mathbb{F}}$
whose image lies in $C$, then $\mathcal{F}(f)$ is the isogeny $\underline{A}_{1}^{(q)}\overset{f^{(q)}}{\rightarrow}\underline{A}_{0}^{(q)}$
and we have: $f^{(q)}\circ F_{A_{1}}=F_{A_{0}}\circ f$ , where $F_{A_{i}}:A_{i}\rightarrow A_{i}^{(q)}$
is the Frobenius isogeny. This shows $\left|F\cdot\mathcal{F}(C)\right|=\left|C\cdot F\right|$.
Write $X$ for this support. We define an isomorphism 
\[
\alpha:p\textrm{-}\mathscr{I}sog^{(c)}\times\overline{\mathbb{F}}\longrightarrow\left(p\textrm{-}\mathscr{I}sog^{(c)}\times\overline{\mathbb{F}}\right)\times_{t,s}F
\]
by sending $\underline{A}_{1}\overset{f}{\longrightarrow}\underline{A}_{0}$
to the pair $\left(\underline{A}_{1}\overset{f}{\longrightarrow}\underline{A}_{0}\:,\:\underline{A}_{0}\overset{F}{\longrightarrow}\underline{A}_{0}^{(q)}\right)$.
Similarly, define 
\[
\beta:p\textrm{-}\mathscr{I}sog^{(c)}\times\overline{\mathbb{F}}\longrightarrow F\times_{t,s}\left(p\textrm{-}\mathscr{I}sog^{(c)}\times\overline{\mathbb{F}}\right)
\]
by sending $\underline{A}_{1}\overset{f}{\longrightarrow}\underline{A}_{0}$
to the pair $\left(\underline{A}_{1}\overset{F}{\longrightarrow}\underline{A}_{1}^{(q)}\:,\:\underline{A}_{1}^{(q)}\overset{f^{(q)}}{\longrightarrow}\underline{A}_{0}^{(q)}\right)$.
It has degree 1. Consider the commutative diagram

$$\xymatrix{\relax
C \ar[d]_{\beta} \ar[r]^{\alpha} & C\times_{t,s}F \ar[d]_-{c_2} \\
F\times_{t,s}\mathcal{F}(C) \ar[r]^-{c_1} & X
}$$where $c_{1}$ and $c_{2}$ are the restriction of $c$ to $F\times_{t,s}\mathcal{F}(C)$
and $C\times_{t,s}F$ respectively. We have $F\cdot\mathcal{F}(C)=\textrm{deg}(c_{1})X$
and $C\cdot F=\textrm{deg}(c_{2})X$. Since $\alpha$ and $\beta$
have degree 1, we deduce $F\cdot\mathcal{F}(C)=C\cdot F$.
\end{proof}
We have proved some results on $p\textrm{-}\mathscr{I}sog^{(c)}\times\overline{\mathbb{F}}$.
Observe that diagram (\ref{diagcommutatif}) involves $p\textrm{-}\mathscr{I}sog^{(c)}\times\kappa(\mathcal{O}_{E_{p}})$.
For the relation $H_{p}(F)=0$ to make sense, $F$ has to commute
with the coefficients of $H_{p}$. The pull-back by $p\textrm{-}\mathscr{I}sog^{(c)}\times\overline{\mathbb{F}}\rightarrow p\textrm{-}\mathscr{I}sog^{(c)}\times\kappa(\mathcal{O}_{E_{p}})$
defines a $\mathbb{Q}$-algebra homomorphism 
\begin{equation}
\mathbb{Q}\left[p\textrm{-}\mathscr{I}sog^{(c)}\times\kappa(\mathcal{O}_{E_{p}})\right]\hookrightarrow\mathbb{Q}\left[p\textrm{-}\mathscr{I}sog^{(c)}\times\overline{\mathbb{F}}\right].\label{injkf}
\end{equation}

\begin{cor}
The element $F$ belongs to the centre of $\mathbb{Q}\left[p\textrm{-}\mathscr{I}sog^{(c)}\times\kappa(\mathcal{O}_{E_{p}})\right]$.\end{cor}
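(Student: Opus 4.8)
The plan is to deduce this from the already-established Proposition \ref{FCCF} by base changing to $\overline{\mathbb{F}}$. Via the injective $\mathbb{Q}$-algebra homomorphism (\ref{injkf}) it is enough to show that the image of $F$ in $\mathbb{Q}\left[p\textrm{-}\mathscr{I}sog^{(c)}\times\overline{\mathbb{F}}\right]$ commutes with the image of every generator of $\mathbb{Q}\left[p\textrm{-}\mathscr{I}sog^{(c)}\times\kappa(\mathcal{O}_{E_{p}})\right]$, i.e. with the cycle $\widetilde{C}:=C\times_{\kappa(\mathcal{O}_{E_{p}})}\overline{\mathbb{F}}$ where $C$ runs over the irreducible components of $p\textrm{-}\mathscr{I}sog^{(c)}\times\kappa(\mathcal{O}_{E_{p}})$. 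First I would note that $F$ itself lies in the image of (\ref{injkf}): it is by construction the image of the Frobenius section of $s$, which is already defined on $\overline{Sh_{K}}$ over $\kappa(\mathcal{O}_{E_{p}})$, so it is a sum of irreducible components of $p\textrm{-}\mathscr{I}sog\times\kappa(\mathcal{O}_{E_{p}})$.

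The crux is the observation that, since $C$ is defined over $\kappa(\mathcal{O}_{E_{p}})=\mathbb{F}_{p^{2}}$, the base-changed cycle $\widetilde{C}$ is fixed by the relative Frobenius $\mathcal{F}$ (here $\mathcal{F}$ is the $p^{2}$-power Frobenius, consistently with the $\underline{A}\mapsto\underline{A}^{(p^{2})}$ used to define $F$): concretely $\mathcal{F}$ permutes the finitely many geometric irreducible components of $p\textrm{-}\mathscr{I}sog^{(c)}\times\overline{\mathbb{F}}$ lying over $C$, compatibly with multiplicities, hence fixes their formal sum, so $\mathcal{F}(\widetilde{C})=\widetilde{C}$. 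Applying Proposition \ref{FCCF}, extended using $\mathbb{Q}$-bilinearity of the composition product and $\mathbb{Q}$-linearity of $\mathcal{F}$, one gets
\[
F\cdot\widetilde{C}=F\cdot\mathcal{F}(\widetilde{C})=\widetilde{C}\cdot F.
\]
Since $F$ also commutes with the unit $p\textrm{-}\mathscr{I}sog^{(0)}\times\overline{\mathbb{F}}$ and multiplication is $\mathbb{Q}$-bilinear, this shows that the image of $F$ is central; pulling back along the injection (\ref{injkf}) then yields the statement. The identical argument works in $\mathbb{Q}\left[p\textrm{-}\mathscr{I}sog\times\kappa(\mathcal{O}_{E_{p}})\right]$.

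\textbf{Main point of care.} The only step that genuinely needs justification is that the embedding (\ref{injkf}) really intertwines the two composition products, i.e. that flat base change along $\operatorname{Spec}\overline{\mathbb{F}}\to\operatorname{Spec}\kappa(\mathcal{O}_{E_{p}})$ is compatible with forming the fibre products over $(t,s)$ and with the proper pushforward $c_{*}$; this is precisely the assertion, made above, that (\ref{injkf}) is a $\mathbb{Q}$-algebra homomorphism, which I would simply invoke (it follows from the projection formula and compatibility of proper pushforward with flat pullback in a cartesian square). A second, purely bookkeeping, check is that the $\mathcal{F}$ of Proposition \ref{FCCF} agrees with the nontrivial element of $\operatorname{Gal}(\overline{\mathbb{F}}/\kappa(\mathcal{O}_{E_{p}}))$ acting on geometric points, which is clear from its description $\left(\underline{A}_{1}\xrightarrow{f}\underline{A}_{0}\right)\mapsto\left(\underline{A}_{1}^{(p^{2})}\xrightarrow{f^{(p^{2})}}\underline{A}_{0}^{(p^{2})}\right)$. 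Beyond these formalities there is no real obstacle: the corollary is essentially a formal consequence of Proposition \ref{FCCF} combined with Galois descent for algebraic cycles.
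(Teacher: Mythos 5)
Your proof is correct and takes the same approach as the paper: the paper's proof is the single line ``This follows from proposition \ref{FCCF} using (\ref{injkf}),'' and your write-up simply makes explicit the intermediate steps (Galois-invariance of cycles defined over $\kappa(\mathcal{O}_{E_p})$, linear extension of Proposition \ref{FCCF}, and pullback along the injection) that the paper leaves to the reader.
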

\begin{proof}
This follows from proposition \ref{FCCF} using (\ref{injkf}).
\end{proof}

\subsection{Etale covering}

Let $K^{p}$ and $K'^{p}$ be two compact open subgroups of $G(\mathbb{A}_{f}^{p})$,
such that $K'^{p}\subset K^{p}$. Write $K=K_{p}K^{p}$ and $K'=K_{p}K'^{p}$.
Then we have an étale coverings

\begin{eqnarray*}
\pi:Sh_{K'} & \longrightarrow & Sh_{K}\\
\Pi:p\textrm{-}\mathscr{I}sog_{K'} & \longrightarrow & p\textrm{-}\mathscr{I}sog_{K}
\end{eqnarray*}

\begin{lem}
\label{piF}The push-forward by $\Pi$ defines a $\mathbb{Q}$-algebra
homomorphism:
\[
\Pi_{*}:\mathbb{Q}\left[p\textrm{-}\mathscr{I}sog_{K'}^{(c)}\times k\right]\longrightarrow\mathbb{Q}\left[p\textrm{-}\mathscr{I}sog_{K}^{(c)}\times k\right]
\]
Further, $\Pi_{*}(F)=\textrm{deg}(\pi)F$ and $\Pi_{*}(\left\langle p\right\rangle )=\textrm{deg}(\pi)\left\langle p\right\rangle $.\end{lem}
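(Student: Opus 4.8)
The plan is to reduce everything to the fact that $\Pi$ is a finite étale covering which is the base change of $\pi$, and then to transport cycles along it. First I would check that $\Pi$ is finite étale of degree $\deg(\pi)$ and that it is Cartesian over $\pi$. The point is that any $p$-isogeny $f\colon\underline{A}_{1}\to\underline{A}_{2}$ has $p$-power degree, hence induces an isomorphism on the prime-to-$p$ adelic Tate modules; so a $K'^{p}$-refinement of the level structure on $\underline{A}_{1}$ determines, through $f$, one on $\underline{A}_{2}$, and conversely. Thus the square with vertices $p\textrm{-}\mathscr{I}sog_{K'}$, $p\textrm{-}\mathscr{I}sog_{K}$, $Sh_{K'}$, $Sh_{K}$, with horizontal maps $s$ and vertical maps $\Pi$ and $\pi$, is Cartesian, and likewise with $t$ in place of $s$. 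In particular $\Pi$ is finite étale of degree $\deg(\pi)$, it respects the multiplicator decomposition, and the push-forward $\Pi_{*}$ of $\mathbb{Q}$-cycles is defined. Since $Sh_{K}$ is smooth and, by Proposition \ref{comppisog}, $p\textrm{-}\mathscr{I}sog\times k$ is equidimensional of dimension $n-1$ at both levels, a finite morphism between them carries an irreducible component onto an irreducible component; hence $\Pi_{*}$ sends every irreducible component to a positive multiple of one, and (granting the multiplicativity below) restricts to a well-defined $\mathbb{Q}$-linear map of the subalgebras generated by the irreducible components.

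Next I would verify multiplicativity. The composition maps $c$ at the two levels fit into a square with vertices $p\textrm{-}\mathscr{I}sog_{K'}\times_{t,s}p\textrm{-}\mathscr{I}sog_{K'}$, $p\textrm{-}\mathscr{I}sog_{K}\times_{t,s}p\textrm{-}\mathscr{I}sog_{K}$, $p\textrm{-}\mathscr{I}sog_{K'}$, $p\textrm{-}\mathscr{I}sog_{K}$; by the same argument (a $K'^{p}$-refinement on the common middle object pins down the level structure on the source and target of both factors) this square is Cartesian, with left vertical map $q$ again finite étale. Then $\Pi_{*}(Y_{1}\cdot Y_{2})=\Pi_{*}c_{*}(Y_{1}\times_{t,s}Y_{2})=c_{*}q_{*}(Y_{1}\times_{t,s}Y_{2})$ by functoriality of proper push-forward, so it remains to identify $q_{*}(Y_{1}\times_{t,s}Y_{2})$ with $(\Pi_{*}Y_{1})\times_{t,s}(\Pi_{*}Y_{2})$. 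The fibre product of cycles is obtained from the exterior product and the refined Gysin homomorphism attached to the diagonal $Sh_{K}\hookrightarrow Sh_{K}\times Sh_{K}$ (a regular embedding, $Sh_{K}$ being smooth), and this Gysin map commutes with proper push-forward along the Cartesian square of the previous paragraph; combining this with compatibility of exterior products with push-forward gives the desired identity. This last verification, together with the attendant bookkeeping of degrees, is the step I expect to be the main obstacle.

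Finally I would treat $F$ and $\left\langle p\right\rangle$. The Frobenius isogeny $F_{A}\colon A\to A^{(p^{2})}$ is a $p$-isogeny, and the level structure on $A^{(p^{2})}$ is precisely the one induced from $A$ through $F_{A}$; therefore the Frobenius section at level $K'$ coincides with $\Pi^{-1}(F)$, and $\Pi$ restricts to a finite étale morphism $\Pi^{-1}(F)\to F$ of degree $\deg(\pi)$, whence $\Pi_{*}(F)=\deg(\pi)\,F$. The multiplication-by-$p$ section is handled in the same way: multiplying the prime-to-$p$ level structure by $p$ is compatible with the covering $\pi$, so $\left\langle p\right\rangle$ at level $K'$ equals $\Pi^{-1}(\left\langle p\right\rangle)$ and $\Pi|_{\Pi^{-1}(\left\langle p\right\rangle)}$ is finite étale of degree $\deg(\pi)$, giving $\Pi_{*}(\left\langle p\right\rangle)=\deg(\pi)\,\left\langle p\right\rangle$.
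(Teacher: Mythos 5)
Your proposal is correct and takes essentially the same route as the paper, which also reduces the lemma to two commutative diagrams (one for multiplicativity via the composition map $c$, one to push $F$ and $\left\langle p\right\rangle$ through $\Pi$). Where you differ is in how much you make explicit: the paper's proof simply writes the chain
$\Pi_{*}(C_{1}\cdot C_{2})=\Pi_{*}c_{*}(C_{1}\times_{t,s}C_{2})=c_{*}(\Pi\times\Pi)_{*}(C_{1}\times_{t,s}C_{2})=c_{*}(\Pi_{*}(C_{1})\times_{t,s}\Pi_{*}(C_{2}))$
and the second diagram, and asserts the conclusion; it does not spell out that the level-change squares (with $s$, with $t$, with $F$) are Cartesian, nor why $(\Pi\times\Pi)_{*}$ commutes with the fibre product of cycles. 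You supply both of these: the Cartesian observation (the prime-to-$p$ Tate module of a $p$-isogeny is an isomorphism, so refining the level structure on one side determines it on the other) justifies that $\Pi$ is finite étale of degree $\deg\pi$, that $\Pi^{-1}(F_{K})=F_{K'}$ and $\Pi^{-1}(\left\langle p\right\rangle_{K})=\left\langle p\right\rangle_{K'}$, and that the square comparing $c$ at the two levels is Cartesian. The step you flag as the main obstacle — identifying $q_{*}(Y_{1}\times_{t,s}Y_{2})$ with $(\Pi_{*}Y_{1})\times_{t,s}(\Pi_{*}Y_{2})$ — is exactly the unjustified third equality in the paper's chain, and your outline is the right way to close it: the fibre product of cycles over the smooth base $\overline{Sh_{K}}$ is defined via exterior product and the refined Gysin map for the diagonal (Fulton, Ch.~8), and the required compatibility is the standard push-forward compatibility of refined Gysin maps (Fulton, Thm.~6.2(a)) applied to the tower of Cartesian squares you exhibit. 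In short: the proof is the paper's proof with the implicit hypotheses made explicit; there is no gap.
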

\begin{proof}
Consider the commutative diagram:

$$\xymatrix{p\textrm{-}\mathscr{I}sog_{K'}^{(c)}\times p\textrm{-}\mathscr{I}sog_{K'}^{(c)}\ar[r]^-{c}\ar[d]^{\Pi\times\Pi} & p\textrm{-}\mathscr{I}sog_{K'}^{(c)}\ar[d]^{\Pi}\\p\textrm{-}\mathscr{I}sog_{K}^{(c)}\times_{s,t}p\textrm{-}\mathscr{I}sog_{K}^{(c)}\ar[r]^-{c} & p\textrm{-}\mathscr{I}sog_{K}^{(c)}}$$If
$C_{1},C_{2}$ are cycles,
\[
\Pi_{*}(C_{1}\cdot C_{2})=\Pi_{*}c_{*}(C_{1}\times_{t,s}C_{2})=c_{*}(\Pi\times\Pi)_{*}(C_{1}\times_{t,s}C_{2})=c_{*}(\Pi_{*}(C_{1})\times_{t,s}\Pi_{*}(C_{2}))=\Pi_{*}(C_{1})\cdot\Pi_{*}(C_{2})
\]
thus $\Pi_{*}$ is a ring homomorphism. We have another commutative
diagram:

$$\xymatrix{p\textrm{-}\mathscr{I}sog_{K'}^{(2)}\times k \ar[r]^{\Pi} & p\textrm{-}\mathscr{I}sog_{K}^{(2)}\times k\\
\overline{Sh_{K'}}\times k \ar[u]^F \ar[r]^{\pi} & \overline{Sh_{K}}\times k \ar[u]^F
}$$thus $\Pi_{*}(F)=\textrm{deg}(\pi)F$ and similarly $\Pi_{*}(\left\langle p\right\rangle )=\textrm{deg}(\pi)\left\langle p\right\rangle $.
\end{proof}

\subsection{Main theorem}
\begin{lem}
\label{RDClemme}Let $C\subset p\textrm{-}\mathscr{I}sog^{(c)}\times\overline{\mathbb{F}}$
be a supersingular irreducible component. Assume that the map $(s,t)$
is a closed immersion on $p\textrm{-}\mathscr{I}sog_{K}^{(c),ss}$.
In the ring $\mathbb{Q}\left[p\textrm{-}\mathscr{I}sog\times\overline{\mathbb{F}}\right]$,
the following relation holds:
\[
C\cdot(F-p^{n-1}\left\langle p\right\rangle )=0
\]
\end{lem}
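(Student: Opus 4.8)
The plan is to compute both cycles $C\cdot F$ and $C\cdot\langle p\rangle$ — which lie in $\mathbb{Q}\left[p\textrm{-}\mathscr{I}sog^{(c+2)}\times\overline{\mathbb{F}}\right]$ — and show directly that $C\cdot F=p^{n-1}\,(C\cdot\langle p\rangle)$. I will work with $K^{p}$ small enough that $(s,t)$ is a closed immersion on both $p\textrm{-}\mathscr{I}sog_{K}^{(c),ss}\times\overline{\mathbb{F}}$ and $p\textrm{-}\mathscr{I}sog_{K}^{(c+2),ss}\times\overline{\mathbb{F}}$ (slightly strengthening the hypothesis of the lemma; such $K^{p}$ exist by Theorem~\ref{stimfer}).

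First I would exploit that $F$ (the image of the Frobenius section of $s$) and $\langle p\rangle$ (the image of the multiplication-by-$p$ section) are both images of sections of $s$, so that $s|_{F}$ and $s|_{\langle p\rangle}$ are isomorphisms onto $\overline{Sh_{K}}\times\overline{\mathbb{F}}$. Then the first projection identifies $C\times_{t,s}F$ and $C\times_{t,s}\langle p\rangle$ with $C$ as schemes, and under these identifications the composition morphism $c$ becomes $\gamma_{F}\colon f\mapsto F_{t(f)}\circ f$, respectively $\gamma_{p}\colon f\mapsto p\circ f$; hence $C\cdot F=(\gamma_{F})_{*}[C]$ and $C\cdot\langle p\rangle=(\gamma_{p})_{*}[C]$.

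The key observation will be that after composing with $(s,t)$ these morphisms are completely explicit. On $C\times_{t,s}F$ one has $s\circ c=s\circ\mathrm{pr}_{1}$ and $t\circ c=t\circ\mathrm{pr}_{2}$, and since $F$ is the image of the Frobenius section, $t|_{F}=\mathcal{F}\circ s|_{F}$; together with $s\circ\mathrm{pr}_{2}=t\circ\mathrm{pr}_{1}$ on the fibre product this yields $t\circ c=\mathcal{F}\circ t\circ\mathrm{pr}_{1}$, i.e.\ $(s,t)\circ\gamma_{F}=(\mathrm{id}\times\mathcal{F})\circ(s,t)|_{C}$ as morphisms of schemes; likewise $(s,t)\circ\gamma_{p}=(\mathrm{id}\times\langle p\rangle)\circ(s,t)|_{C}$. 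Now $C_{s}:=s(C)$ and $C_{t}:=t(C)$ are irreducible components of $\overline{Sh_{K}}^{ss}\times\overline{\mathbb{F}}$ (the lemma preceding Proposition~\ref{compunik}), hence smooth of dimension $\tfrac{n-1}{2}$, so $C_{s}\times C_{t}$ is integral of dimension $n-1$ and the closed immersion $(s,t)|_{C}$ is an isomorphism $C\xrightarrow{\sim}C_{s}\times C_{t}$. Identifying $p\textrm{-}\mathscr{I}sog_{K}^{(c+2),ss}\times\overline{\mathbb{F}}$ with its closed image under $(s,t)$ — on which, consequently, pushforward of cycles is injective — I would then read off
\[
C\cdot F=(\mathrm{id}\times\mathcal{F})_{*}[C_{s}\times C_{t}]=\deg(\mathcal{F}|_{C_{t}})\cdot[C_{s}\times\mathcal{F}(C_{t})],\qquad C\cdot\langle p\rangle=[C_{s}\times\langle p\rangle C_{t}].
\]
Here $\mathcal{F}|_{C_{t}}$ is the relative $q$-power Frobenius of the $\tfrac{n-1}{2}$-dimensional $\kappa(\mathcal{O}_{E_{p}})$-scheme (with $q=|\kappa(\mathcal{O}_{E_{p}})|=p^{2}$), hence finite of degree $q^{(n-1)/2}=p^{n-1}$, whereas $\langle p\rangle$ is an automorphism of $\overline{Sh_{K}}\times\overline{\mathbb{F}}$ and has degree $1$. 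Finally, Proposition~\ref{pCFC} gives $\mathcal{F}(C_{t})=\langle p\rangle C_{t}$, whence $C\cdot F=p^{n-1}[C_{s}\times\langle p\rangle C_{t}]=p^{n-1}(C\cdot\langle p\rangle)$, which is the assertion.

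The hard part is the scheme-theoretic identity $(s,t)\circ\gamma_{F}=(\mathrm{id}\times\mathcal{F})\circ(s,t)|_{C}$ — that, under the $(s,t)$-identification, post-composition with the Frobenius isogeny is genuinely the Frobenius of $C_{t}$ and not merely so on geometric points — together with the bookkeeping that keeps the factor $p^{n-1}$ from being absorbed by an inseparable part of $(s,t)$; this is exactly the point at which one uses that $(s,t)$ is a closed immersion on the supersingular loci in degrees $c$ and $c+2$.
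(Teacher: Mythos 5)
Your argument is correct and matches the paper's proof in all essentials: both identify $C\times_{t,s}F$ and $C\times_{t,s}\langle p\rangle$ with $C$, use the fact that $(s,t)$ turns $C$ into the product $C_s\times C_t$ and turns post-composition with $F$ (resp.\ $\langle p\rangle$) into $\mathrm{id}\times\mathcal{F}$ (resp.\ $\mathrm{id}\times\langle p\rangle$), invoke Proposition~\ref{pCFC} for $\mathcal{F}(C_t)=\langle p\rangle C_t$, and extract the factor $p^{n-1}=\deg(\mathcal{F}|_{C_t})$. Your remark that the closed-immersion hypothesis is also needed in degree $c+2$ (not just $c$) is a valid small refinement — the paper's proof of the lemma silently uses Proposition~\ref{compunik} at level $c+2$, and indeed in Theorem~\ref{rdcsjalv} the level is shrunk so that the immersion holds at $c+2$ — but this does not change the substance of the argument.
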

\begin{proof}
The proof is twofold: First we show that $C\cdot F$ and $C\cdot\left\langle p\right\rangle $
have the same support, then we look at multiplicities. The supports
$|C\cdot F|$ and $|C\cdot\left\langle p\right\rangle |$ are irreducible,
of dimension $n-1$. Indeed, they are the direct images by the composition
morphism $c$ of $C\times_{t,s}F$ and $C\times_{t,s}\left\langle p\right\rangle $
respectively, which are irreducible. Thus, $|C\cdot F|$ and $|C\cdot\left\langle p\right\rangle |$
are irreducible components of $p\textrm{-}\mathscr{I}sog^{(c+2)}\times\overline{\mathbb{F}}$.
We clearly have $s(C\cdot F)=s(C\cdot\left\langle p\right\rangle )$.
Using proposition \ref{pCFC}, we have:
\[
t(C\cdot F)=\mathcal{F}(C_{t})=\left\langle p\right\rangle C_{t}=t(C\cdot\left\langle p\right\rangle )
\]
Proposition \ref{compunik} then shows that $|C\cdot F|=|C\cdot\left\langle p\right\rangle |$.
We denote by $X$ this closed subset.

The projection on $C$ defines isomorphisms $a_{F}:C\times_{t,s}F\rightarrow C$
and $a_{p}:C\times_{t,s}\left\langle p\right\rangle \rightarrow C$.
Write $c_{F}=c\circ a_{F}^{-1}$ and $c_{p}=c\circ a_{p}^{-1}$. There
is a commutative diagram:

$$\xymatrix{\relax C\times_{t,s}F\ar[r]^-{c} & X \ar[r]^-{(s,t)} & C_s\times \mathcal{F}(C_t) \\C\ar[u]^{\simeq}\ar[d]_{\simeq}\ar[rd]_{c_{p}}\ar[ru]^{c_{F}} \ar[r]^{(s,t)} & C_s \times C_t \ar[rd]^{\textrm{id}\times \left\langle p\right\rangle } \ar[ru]_{\textrm{id}\times\mathcal{F}} &\\C\times_{t,s}\left\langle p\right\rangle \ar[r]_-{c} & X \ar[r]_-{(s,t)} & C_s\times \mathcal{F}(C_t)}$$Recall
that $\mathcal{F}(C_{t})=\left\langle p\right\rangle C_{t}$. By definition,
$C\cdot F=\textrm{deg}(c_{F})X$ and $C\cdot\left\langle p\right\rangle =\textrm{deg}(c_{p})X$.
The diagram shows that $\frac{\textrm{deg}(c_{F})}{\textrm{deg}(c_{p})}=\frac{\textrm{deg}(\textrm{id}\times\mathcal{F})}{\textrm{deg}(\textrm{id}\times\left\langle p\right\rangle )}$.
The map $\left\langle p\right\rangle :C_{t}\rightarrow\left\langle p\right\rangle C_{t}$
has degree 1 and $\mathcal{F}:C_{t}\rightarrow\mathcal{F}(C_{t})$
has degree $p^{2\frac{n-1}{2}}=p^{n-1}$ since $C_{t}$ has dimension
$\frac{n-1}{2}$. Thus, $\textrm{deg}(c_{F})=p^{n-1}\textrm{deg}(c_{p})$,
and finally $C\cdot F=p^{n-1}C\cdot\left\langle p\right\rangle $.\end{proof}
\begin{thm}
\label{rdcsjalv}Let $C\subset p\textrm{-}\mathscr{I}sog^{(c)}\times\overline{\mathbb{F}}$
be an irreducible supersingular component. In the ring $\mathbb{Q}\left[p\textrm{-}\mathscr{I}sog^{(c)}\times\overline{\mathbb{F}}\right]$,
the following relation holds:
\[
C\cdot(F-p^{n-1}\left\langle p\right\rangle )=0.
\]
\end{thm}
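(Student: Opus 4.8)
The plan is to remove the closed-immersion hypothesis from Lemma~\ref{RDClemme} by passing to a deeper prime-to-$p$ level structure, where that hypothesis can be arranged by Theorem~\ref{stimfer}, and then to descend the resulting identity via the étale pushforward of Lemma~\ref{piF}. First, we may assume that $c$ is even: for odd $c$ there is no supersingular component $C$ in $p\textrm{-}\mathscr{I}sog^{(c)}\times\overline{\mathbb{F}}$ at all, since $\mathcal{N}'_{red,i}=\emptyset$ for odd $i$ (Section~4.1) and the $\overline{\mathbb{F}}$-points of the supersingular part of $p\textrm{-}\mathscr{I}sog^{(c)}$ inject into such $\mathcal{N}'_{red,i}$ via the maps $\epsilon$ of Section~4.2. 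Let $K=K_pK^p$ be the level at which $C$ is given. By Theorem~\ref{stimfer} choose a compact open $K'^{p}\subset K^p$ such that, setting $K'=K_pK'^{p}$, the morphism $(s,t)$ is a closed immersion on $p\textrm{-}\mathscr{I}sog_{K'}^{(c),ss}$; let $\pi:Sh_{K'}\to Sh_K$ and $\Pi:p\textrm{-}\mathscr{I}sog_{K'}\to p\textrm{-}\mathscr{I}sog_K$ be the associated étale coverings.

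Next I would lift $C$ through $\Pi$. Since $\Pi$ is finite étale, $\Pi^{-1}(C)$ is closed of pure dimension $n-1$, and all its points are supersingular because $s\circ\Pi=\pi\circ s$ and $\overline{Sh_{K'}}^{ss}=\pi^{-1}(\overline{Sh_K}^{ss})$. As $p\textrm{-}\mathscr{I}sog_{K'}^{(c)}\times\overline{\mathbb{F}}$ is equidimensional of dimension $n-1$ (Proposition~\ref{comppisog}), every irreducible component of $\Pi^{-1}(C)$ is an irreducible component of $p\textrm{-}\mathscr{I}sog_{K'}^{(c)}\times\overline{\mathbb{F}}$, necessarily a supersingular one. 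Fix such a component $C'$. Then $\Pi(C')\subseteq C$ is irreducible, closed and of dimension $n-1$, hence equals $C$, so that $\Pi_*[C']=m\,[C]$ with $m=\textrm{deg}(C'\to C)$ a positive integer.

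Now Lemma~\ref{RDClemme}, applied at level $K'$ to the supersingular irreducible component $C'$ (the closed-immersion hypothesis holds by our choice of $K'$), gives $C'\cdot(F-p^{n-1}\left\langle p\right\rangle)=0$ in $\mathbb{Q}[p\textrm{-}\mathscr{I}sog_{K'}\times\overline{\mathbb{F}}]$. Applying the $\mathbb{Q}$-algebra homomorphism $\Pi_*$ of Lemma~\ref{piF}, and using the formulas $\Pi_*(F)=\textrm{deg}(\pi)\,F$ and $\Pi_*(\left\langle p\right\rangle)=\textrm{deg}(\pi)\,\left\langle p\right\rangle$ from that lemma, one obtains
\[
0=\Pi_*\big(C'\cdot(F-p^{n-1}\left\langle p\right\rangle)\big)=\Pi_*(C')\cdot\Pi_*\big(F-p^{n-1}\left\langle p\right\rangle\big)=m\,\textrm{deg}(\pi)\,\big(C\cdot(F-p^{n-1}\left\langle p\right\rangle)\big)
\]
in $\mathbb{Q}[p\textrm{-}\mathscr{I}sog_K\times\overline{\mathbb{F}}]$. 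Since $m\,\textrm{deg}(\pi)$ is a nonzero integer and we are working in a $\mathbb{Q}$-algebra, this forces $C\cdot(F-p^{n-1}\left\langle p\right\rangle)=0$, which is the assertion.

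I do not expect a serious obstacle at this stage: the genuine geometry has already been spent in Theorem~\ref{stimfer} (achieving the closed immersion) and in the Frobenius degree computation inside Lemma~\ref{RDClemme}. The only step requiring a little care is the lifting in the second paragraph — verifying that $\Pi^{-1}(C)$ really decomposes into honest supersingular irreducible components upstairs, so that Lemma~\ref{RDClemme} applies to $C'$ verbatim, and that the pushforward multiplicity $m$ is a nonzero rational number — but this is a formal consequence of $\Pi$ being finite, flat and quasi-finite, so that it suffices to combine the pushforward compatibility of Lemma~\ref{piF} with the equidimensionality of Proposition~\ref{comppisog}.
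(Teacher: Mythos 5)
Your argument is correct and follows the same route as the paper: pass to a deeper prime-to-$p$ level $K'$ via Theorem~\ref{stimfer}, lift $C$ to a supersingular irreducible component $C'$ upstairs, apply Lemma~\ref{RDClemme} there, and descend by the $\mathbb{Q}$-algebra homomorphism $\Pi_*$ of Lemma~\ref{piF}. One small adjustment: the paper's proof of Theorem~\ref{rdcsjalv} chooses $K'^p$ so that $(s,t)$ is a closed immersion on $p\textrm{-}\mathscr{I}sog_{K'}^{(c+2),ss}$, not $p\textrm{-}\mathscr{I}sog_{K'}^{(c),ss}$ as you wrote, since inside Lemma~\ref{RDClemme} Proposition~\ref{compunik} is applied to the components $C'\cdot F$ and $C'\cdot\langle p\rangle$, which have multiplicator $c+2$; the printed hypothesis of Lemma~\ref{RDClemme} appears to be a typo that you reproduced, and you should replace $c$ by $c+2$ in your choice of $K'$ for the argument to close.
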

\begin{proof}
Let $K'^{p}\subset K^{p}$ such that $(s,t)$ is a closed immersion
on $p\textrm{-}\mathscr{I}sog_{K'}^{(c+2)}\times k$ (proposition
\ref{stimfer}), and $C'$ be a supersingular irreducible component
of $p\textrm{-}\mathscr{I}sog^{(c)}\times k$ such that $\Pi(C')=C$.
We have $C'\cdot(F-p^{n-1}\left\langle p\right\rangle )=0$ (lemma
\ref{RDClemme}), and taking the image by $\Pi_{*}$, we find $C\cdot(F-p^{n-1}\left\langle p\right\rangle )=0$
(lemma \ref{piF}).\end{proof}
\begin{thm}
\label{rdcfinal}Let $H_{p}$ be the Hecke polynomial. Consider the
coefficients of $H_{p}$ in $\mathbb{Q}\left[p\textrm{-}\mathscr{I}sog\times\kappa(\mathcal{O}_{E_{p}})\right]$
through $\sigma\circ h$ (see diagram (\ref{diagcommutatif})). We
have the relation:
\[
H_{p}(F)=0.
\]
\end{thm}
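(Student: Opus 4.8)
The plan is to derive the relation by combining the factorisation of Theorem~\ref{facteurH}, Moonen's ordinary congruence relation (Theorem~\ref{moonenRDCord}), and the supersingular vanishing of Theorem~\ref{rdcsjalv}; the statement is essentially the assembly of these three inputs. First I would apply $\sigma\circ h$ to the identity $H_p(t)=R(t)\cdot(t-p^{n-1}1_{pK_p})$ of Theorem~\ref{facteurH}. Since $\sigma\circ h(1_{pK_p})=\langle p\rangle$, this yields, inside $\mathbb{Q}\left[p\textrm{-}\mathscr{I}sog\times\kappa(\mathcal{O}_{E_p})\right][t]$,
\[
H_p(t)=R(t)\cdot(t-p^{n-1}\langle p\rangle),
\]
where I now write $R$ for $\sigma\circ h(R)$. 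Let $H'_p=\mathrm{cl}\circ\mathrm{ord}(H_p)$ and $R'=\mathrm{cl}\circ\mathrm{ord}(R)$, the maps being applied to each coefficient. Because $\mathrm{ord}$ and $\mathrm{cl}$ are $\mathbb{Q}$-algebra homomorphisms (diagram~(\ref{diagcommutatif})) and $\mathrm{cl}\circ\mathrm{ord}$ fixes $\langle p\rangle$ (which maps isomorphically onto $\overline{Sh_K}$, whose ordinary locus is dense), applying $\mathrm{cl}\circ\mathrm{ord}$ to the displayed identity and subtracting gives the polynomial identity
\[
(H_p-H'_p)(t)=(R-R')(t)\cdot(t-p^{n-1}\langle p\rangle).
\]

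Next I would dispose of $H'_p$ using Moonen. Theorem~\ref{moonenRDCord} asserts $H_p(F)=0$ in $\mathbb{Q}\left[p\textrm{-}\mathscr{I}sog^{ord}\times\kappa(\mathcal{O}_{E_p})\right]$; writing $a_i$ for the coefficients of $H_p$ in $\mathbb{Q}\left[p\textrm{-}\mathscr{I}sog\times\kappa(\mathcal{O}_{E_p})\right]$, this reads $\sum_i\mathrm{ord}(a_i)\,(\mathrm{ord}\,F)^i=0$. Applying the ring homomorphism $\mathrm{cl}$ and using $\mathrm{cl}\circ\mathrm{ord}(F)=F$ turns this into $\sum_i(\mathrm{cl}\circ\mathrm{ord}(a_i))\,F^i=H'_p(F)=0$. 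Since $F$ lies in the centre (the corollary to Proposition~\ref{FCCF}) and $\langle p\rangle$ is central as well, I may substitute $t=F$ in the last displayed identity, obtaining $H_p(F)-H'_p(F)=(R-R')(F)\cdot(F-p^{n-1}\langle p\rangle)$. Combined with $H'_p(F)=0$, it remains only to check that $(R-R')(F)\cdot(F-p^{n-1}\langle p\rangle)=0$.

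Finally I would analyse $(R-R')(F)$. Each coefficient of $R-R'$ has the shape $x-\mathrm{cl}\circ\mathrm{ord}(x)$. By Proposition~\ref{comppisog}, an irreducible component of $p\textrm{-}\mathscr{I}sog\times\overline{\mathbb{F}}$ either lies in the closure of the ordinary locus or is entirely supersingular, and $\mathrm{cl}\circ\mathrm{ord}$ acts as the identity on the former and annihilates the latter; hence $x-\mathrm{cl}\circ\mathrm{ord}(x)$ is a $\mathbb{Q}$-linear combination of supersingular irreducible components, and the same holds for $(R-R')(F)$ (composing with the Frobenius isogeny preserves supersingularity). Passing to $\mathbb{Q}\left[p\textrm{-}\mathscr{I}sog\times\overline{\mathbb{F}}\right]$ via the injection~(\ref{injkf}) and decomposing each such component into its irreducible components over $\overline{\mathbb{F}}$, Theorem~\ref{rdcsjalv} gives $C\cdot(F-p^{n-1}\langle p\rangle)=0$ for every one of them. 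Therefore $(R-R')(F)\cdot(F-p^{n-1}\langle p\rangle)=0$ in $\mathbb{Q}\left[p\textrm{-}\mathscr{I}sog\times\overline{\mathbb{F}}\right]$, hence also in $\mathbb{Q}\left[p\textrm{-}\mathscr{I}sog\times\kappa(\mathcal{O}_{E_p})\right]$ by injectivity, and together with the vanishing of $H'_p(F)$ this yields $H_p(F)=0$.

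The delicate points are the two places where the multiplicativity of $\mathrm{cl}$ and $\mathrm{ord}$ is used: first, that applying $\mathrm{cl}$ to Moonen's relation genuinely produces $H'_p(F)=0$, which needs $\mathrm{cl}$ to be a $\mathbb{Q}$-algebra homomorphism fixing $F$ and $\langle p\rangle$; and second, that $R-R'$ is supported on supersingular components, which rests on Proposition~\ref{comppisog} together with the fact that $\mathrm{cl}\circ\mathrm{ord}$ is the idempotent projection onto the span of the non-supersingular irreducible components. Once these structural facts are in place, the rest is formal manipulation of the factorisation of Theorem~\ref{facteurH}.
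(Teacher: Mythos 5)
Your argument follows the paper's proof essentially step for step: factor $H_p(t)=R(t)\cdot(t-p^{n-1}\langle p\rangle)$ via Theorem~\ref{facteurH}, subtract the image of $H_p$ under $\mathrm{cl}\circ\mathrm{ord}$ to obtain $(H_p-H'_p)(t)=(R-R')(t)\cdot(t-p^{n-1}\langle p\rangle)$, dispose of $H'_p(F)$ by Moonen's ordinary congruence relation (Theorem~\ref{moonenRDCord}), observe via Proposition~\ref{comppisog} that the coefficients of $R-R'$ are purely supersingular, and conclude with Theorem~\ref{rdcsjalv}. You are somewhat more explicit than the paper on a few bookkeeping points — that $\mathrm{cl}\circ\mathrm{ord}$ fixes $\langle p\rangle$, that centrality of $F$ is needed to substitute $t=F$ into the factored identity, and that one passes through the injection~(\ref{injkf}) to decompose cycles over $\overline{\mathbb{F}}$ — but these are just details the paper leaves implicit, not a different route.
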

\begin{proof}
We have $H_{p}(t)=R(t)\cdot(t-p^{n-1}\left\langle p\right\rangle )$
with $R(t)\in\mathbb{Q}\left[p\textrm{-}\mathscr{I}sog\times\kappa(\mathcal{O}_{E_{p}})\right][t]$
(theorem \ref{facteurH}). Let $H_{p}^{'}=\textrm{cl}(\textrm{ord}(H_{p}))$
and $R'=\textrm{cl}(\textrm{ord}(R))$. Then $H_{p}^{'}(t)=R^{'}(t)\cdot(t-p^{n-1}\left\langle p\right\rangle )$.
Theorem \ref{moonenRDCord} shows that $H'_{p}(F)=0$ in $\mathbb{Q}\left[p\textrm{-}\mathscr{I}sog\times\kappa(\mathcal{O}_{E_{p}})\right]$.
Therefore,
\[
H_{p}(F)=\left(H_{p}-H_{p}^{'}\right)(F)=\left(R-R^{'}\right)(F)\cdot\left(F-p^{n-1}\left\langle p\right\rangle \right)
\]
The coefficients of $H_{p}$ and $R$ are linear combinations of supersingular
irreducible components of $p\textrm{-}\mathscr{I}sog\times\kappa(\mathcal{O}_{E_{p}})$.
Indeed, they are specialization of cycles of dimension $n-1$ in $\mathbb{Q}\left[p\textrm{-}\mathscr{I}sog\times E_{p}\right]$,
and specialization respects dimensions (\cite{fulton}, 20.3). These
componants are either ordinary or supersingular (proposition \ref{comppisog}).
Thus, the coefficients of $R-R^{'}$ are $n-1$-dimensional supersingular
cycles, and so is $\left(R-R^{'}\right)(F)$. Finally, theorem \ref{rdcsjalv}
shows that $H_{p}(F)=0$.
\end{proof}
\bibliographystyle{plainnat}
\bibliography{references}

\begin{thebibliography}{15}
\providecommand{\natexlab}[1]{#1}
\providecommand{\url}[1]{\texttt{#1}}
\expandafter\ifx\csname urlstyle\endcsname\relax
  \providecommand{\doi}[1]{doi: #1}\else
  \providecommand{\doi}{doi: \begingroup \urlstyle{rm}\Url}\fi

\bibitem[Blasius and Rogawski(1994)]{BR}
Don Blasius and Jonathan~D. Rogawski.
\newblock {Zeta functions of {S}himura varieties}.
\newblock In \emph{{Motives ({S}eattle, {WA}, 1991)}}, volume~55 of
  \emph{{Proc. Sympos. Pure Math.}}, pages 525--571. Amer. Math. Soc.,
  Providence, RI, 1994.

\bibitem[B{\"u}ltel(2002)]{bult}
Oliver B{\"u}ltel.
\newblock {The congruence relation in the non-{PEL} case}.
\newblock \emph{J. Reine Angew. Math.}, 544:\penalty0 133--159, 2002.
\newblock ISSN 0075-4102.
\newblock \doi{10.1515/crll.2002.020}.
\newblock URL \url{http://dx.doi.org/10.1515/crll.2002.020}.

\bibitem[B{\"u}ltel and Wedhorn(2006)]{BW1}
Oliver B{\"u}ltel and Torsten Wedhorn.
\newblock {Congruence relations for {S}himura varieties associated to some
  unitary groups}.
\newblock \emph{J. Inst. Math. Jussieu}, 5\penalty0 (2):\penalty0 229--261,
  2006.
\newblock ISSN 1474-7480.
\newblock \doi{10.1017/S1474748005000253}.
\newblock URL \url{http://dx.doi.org/10.1017/S1474748005000253}.

\bibitem[Conrad(2004)]{GZR}
Brian Conrad.
\newblock {Gross-{Z}agier revisited}.
\newblock In \emph{{Heegner points and {R}ankin {$L$}-series}}, volume~49 of
  \emph{{Math. Sci. Res. Inst. Publ.}}, pages 67--163. Cambridge Univ. Press,
  Cambridge, 2004.
\newblock \doi{10.1017/CBO9780511756375.006}.
\newblock URL \url{http://dx.doi.org/10.1017/CBO9780511756375.006}.
\newblock With an appendix by W. R. Mann.

\bibitem[Faltings and Chai(1990)]{fc}
G.~Faltings and C.-L. Chai.
\newblock \emph{{Degeneration of abelian varieties}}, volume~22.
\newblock Springer-Verlag, 1990.

\bibitem[Fulton(1998)]{fulton}
W.~Fulton.
\newblock \emph{{Intersection Theory}}.
\newblock {Ergebnisse Der Mathematik Und Ihrer Grenzgebiete, 3. Folge, Bd. 2}.
  Springer-Verlag GmbH, 1998.
\newblock ISBN 9783540620464.

\bibitem[Grothendieck(1966)]{EGA4}
A.~Grothendieck.
\newblock {{\'E}l{\'e}ments de g{\'e}om{\'e}trie alg{\'e}brique. {IV}.
  {\'E}tude locale des sch{\'e}mas et des morphismes de sch{\'e}mas. {III}}.
\newblock \emph{Inst. Hautes {\'E}tudes Sci. Publ. Math.}, \penalty0
  (28):\penalty0 255, 1966.
\newblock ISSN 0073-8301.

\bibitem[Grothendieck(1974)]{grothBT}
Alexandre Grothendieck.
\newblock \emph{{Groupes de {B}arsotti-{T}ate et cristaux de {D}ieudonn{\'e}}}.
\newblock Les Presses de l'Universit{\'e} de Montr{\'e}al, Montreal, Que.,
  1974.
\newblock S{\'e}minaire de Math{\'e}matiques Sup{\'e}rieures, No. 45
  ({\'E}t{\'e}, 1970).

\bibitem[Kottwitz(1992)]{Kot}
Robert~E. Kottwitz.
\newblock {Points on some {S}himura varieties over finite fields}.
\newblock \emph{J. Amer. Math. Soc.}, 5\penalty0 (2):\penalty0 373--444, 1992.
\newblock ISSN 0894-0347.
\newblock \doi{10.2307/2152772}.
\newblock URL \url{http://dx.doi.org/10.2307/2152772}.

\bibitem[Milne(2005)]{milneintro}
J.~S. Milne.
\newblock {Introduction to {S}himura varieties}.
\newblock In \emph{{Harmonic analysis, the trace formula, and {S}himura
  varieties}}, volume~4 of \emph{{Clay Math. Proc.}}, pages 265--378. Amer.
  Math. Soc., Providence, RI, 2005.

\bibitem[Moonen(2004)]{BM}
Ben Moonen.
\newblock {Serre-{T}ate theory for moduli spaces of {PEL} type}.
\newblock \emph{Ann. Sci. {\'E}cole Norm. Sup. (4)}, 37\penalty0 (2):\penalty0
  223--269, 2004.
\newblock ISSN 0012-9593.
\newblock \doi{10.1016/j.ansens.2003.04.004}.
\newblock URL \url{http://dx.doi.org/10.1016/j.ansens.2003.04.004}.

\bibitem[Rapoport and Zink(1996)]{RZ}
M.~Rapoport and Th. Zink.
\newblock \emph{{Period spaces for {$p$}-divisible groups}}, volume 141 of
  \emph{{Annals of Mathematics Studies}}.
\newblock Princeton University Press, Princeton, NJ, 1996.
\newblock ISBN 0-691-02782-X; 0-691-02781-1.

\bibitem[Vollaard(2010)]{V1}
Inken Vollaard.
\newblock {The supersingular locus of the {S}himura variety for {${\rm
  GU}(1,s)$}}.
\newblock \emph{Canad. J. Math.}, 62\penalty0 (3):\penalty0 668--720, 2010.
\newblock ISSN 0008-414X.
\newblock \doi{10.4153/CJM-2010-031-2}.
\newblock URL \url{http://dx.doi.org/10.4153/CJM-2010-031-2}.

\bibitem[Vollaard and Wedhorn(2011)]{V2}
Inken Vollaard and Torsten Wedhorn.
\newblock {The supersingular locus of the {S}himura variety of {${\rm
  GU}(1,n-1)$} {II}}.
\newblock \emph{Invent. Math.}, 184\penalty0 (3):\penalty0 591--627, 2011.
\newblock ISSN 0020-9910.
\newblock \doi{10.1007/s00222-010-0299-y}.
\newblock URL \url{http://dx.doi.org/10.1007/s00222-010-0299-y}.

\bibitem[Wedhorn(2000)]{W1}
Torsten Wedhorn.
\newblock {Congruence relations on some {S}himura varieties}.
\newblock \emph{J. Reine Angew. Math.}, 524:\penalty0 43--71, 2000.
\newblock ISSN 0075-4102.
\newblock \doi{10.1515/crll.2000.060}.
\newblock URL \url{http://dx.doi.org/10.1515/crll.2000.060}.

\end{thebibliography}

\end{document}